\theoremstyle{plain}
\newtheorem{theorem}{Theorem}[section]
\newtheorem{proposition}[theorem]{Proposition}
\newtheorem{corollary}[theorem]{Corollary}
\newtheorem{lemma}[theorem]{Lemma}
\theoremstyle{definition}
\newtheorem{remark}[theorem]{Remark}
\newtheorem{definition}[theorem]{Definition}
\newtheorem{example}[theorem]{Example}
\renewenvironment{proof}{\noindent{\bf Proof.}}{\hfill$\square$\par}
\title{\textbf{Rainbow Boomerang Graphs}}
\author{
\textbf{Shunsuke Hirota}\\
\textbf{Department of Mathematics, Kyoto University}\\
Kitashirakawa Oiwake-cho, Sakyo-ku, 606-8502, Kyoto, Japan\\
\textit{E-mail address}: hirota.shunsuke.48s@st.kyoto-u.ac.jp
}
\begin{document}

\maketitle

\begin{abstract}
\begin{abstract}
We generalize the well-known exchange property of Coxeter groups to the setting of edge-colored graphs.

This work aims to unify and extend the results of our companion article, “Odd Verma’s Theorem,” which were originally established for basic Lie superalgebras, to the broader setting of regular symmetrizable Kac–Moody Lie superalgebras and Nichols algebras of diagonal type, via the theory of Weyl groupoids in the sense of Heckenberger and Yamane. In particular, we show that the exchange property of odd reflections arises as a special case of the exchange property of Weyl groupoids.

To study the exchange property itself, we analyze a class of edge-colored graphs introduced here—called rainbow boomerang graphs—which form an independently natural family of combinatorial objects.

We also elaborate on odd Verma's theorem in the specific setting of Nichols algebras of diagonal type.
\end{abstract}
\end{abstract}

\tableofcontents

\section{Introduction}
\subsection{Background and motivation}

The following is what should be regarded as the well-known exchange property for Coxeter groups.  

\begin{theorem}[{\cite[Corollary 1.7]{humphreys1992reflection}}]\label{g.lwnw}
Let \( \Delta \) be a (not necessarily crystallographic) root system, and let \( W \) be the associated Coxeter group. Fix a positive root system \( \Delta^+ \). Denote by \( l(w) \) the length of \( w \in W \), and let \( n(w) := \#(\Delta^+ \cap (-w(\Delta^+))) \). Then, 
\[
l(w) = n(w).
\]
\end{theorem}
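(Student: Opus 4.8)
The plan is to establish the two inequalities $n(w)\le l(w)$ and $l(w)\le n(w)$ together, by a single induction on $l(w)$, built entirely on the one foundational fact about simple reflections: if $s_i$ denotes the simple reflection in the simple root $\alpha_i$, then $s_i(\alpha_i)=-\alpha_i$ while $s_i$ permutes $\Delta^+\setminus\{\alpha_i\}$. (I would recall the short standard proof of this: $s_i$ alters only the $\alpha_i$-coordinate of a root written in the basis of simple roots, so a positive root different from $\alpha_i$, which has a strictly positive coordinate at some $\alpha_j$ with $j\ne i$, is carried to a root that is still positive.) Writing $N(w):=\Delta^+\cap(-w(\Delta^+))=\{\gamma\in\Delta^+:w^{-1}(\gamma)\in-\Delta^+\}$, so that $n(w)=\#N(w)$, I would first record the effect of a right simple reflection. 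Since $s_i(-\Delta^+)=(-\Delta^+\setminus\{-\alpha_i\})\cup\{\alpha_i\}$, a direct unwinding of the definition of $N(ws_i)$ (a routine case check) gives
\[
N(ws_i)=\begin{cases}N(w)\sqcup\{w(\alpha_i)\},& w(\alpha_i)\in\Delta^+,\\[2pt] N(w)\setminus\{-w(\alpha_i)\},& w(\alpha_i)\in-\Delta^+,\end{cases}
\]
so that $n(ws_i)=n(w)+1$ when $w(\alpha_i)\in\Delta^+$ and $n(ws_i)=n(w)-1$ otherwise; in particular $|n(ws_i)-n(w)|=1$ always.

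With this in hand the induction is quick up to one point. For $l(w)=0$ we have $w=e$ and $n(e)=0$. For $l(w)=k\ge1$ I would fix a reduced expression $w=s_{i_1}\cdots s_{i_k}$ and set $w':=ws_{i_k}=s_{i_1}\cdots s_{i_{k-1}}$, so $l(w')=k-1$ and, by the inductive hypothesis, $n(w')=k-1$. Then $w=w's_{i_k}$ and the formula above gives $n(w)\in\{k-2,k\}$, with $n(w)=k$ exactly when $w'(\alpha_{i_k})\in\Delta^+$. Thus everything reduces to the claim that $w'(\alpha_{i_k})\in\Delta^+$ whenever $w=s_{i_1}\cdots s_{i_k}$ is reduced.

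The proof of that claim is the main obstacle, and it is exactly the classical deletion/exchange argument. I would argue by contradiction: if $w'(\alpha_{i_k})\in-\Delta^+$, consider the roots $\rho_j:=s_{i_j}s_{i_{j+1}}\cdots s_{i_{k-1}}(\alpha_{i_k})$ for $1\le j\le k$ (with $\rho_k:=\alpha_{i_k}$), which satisfy $\rho_j=s_{i_j}(\rho_{j+1})$, $\rho_k\in\Delta^+$, and $\rho_1=w'(\alpha_{i_k})\in-\Delta^+$. Taking $j$ maximal with $\rho_j\in-\Delta^+$ (so $j\le k-1$ and $\rho_{j+1}\in\Delta^+$), the flip lemma forces $\rho_{j+1}=\alpha_{i_j}$, because $\alpha_{i_j}$ is the only positive root carried by $s_{i_j}$ into $-\Delta^+$. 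Hence $u(\alpha_{i_k})=\alpha_{i_j}$ for $u:=s_{i_{j+1}}\cdots s_{i_{k-1}}$, so $us_{i_k}u^{-1}=s_{i_j}$ (conjugating the reflection in $\alpha_{i_k}$ by $u$ yields the reflection in $u(\alpha_{i_k})=\alpha_{i_j}$), i.e. $s_{i_{j+1}}\cdots s_{i_{k-1}}s_{i_k}=s_{i_j}s_{i_{j+1}}\cdots s_{i_{k-1}}$. Substituting this into $w=(s_{i_1}\cdots s_{i_j})(s_{i_{j+1}}\cdots s_{i_k})$ and cancelling the repeated $s_{i_j}$ exhibits $w$ as a product of $k-2$ simple reflections, contradicting $l(w)=k$. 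This closes the induction. It is precisely this last step — the interplay between a reduced word and the sign changes of the associated roots — that the rainbow boomerang graphs introduced below are designed to axiomatize and generalize.
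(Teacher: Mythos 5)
The paper does not prove this statement; it is cited directly from Humphreys (Corollary 1.7) as background. Your argument is correct and is essentially the standard textbook proof that one would find in that reference: the one-step formula $N(ws_i)=N(w)\sqcup\{w(\alpha_i)\}$ or $N(w)\setminus\{-w(\alpha_i)\}$ according to the sign of $w(\alpha_i)$ reduces everything to the statement that a reduced word $s_{i_1}\cdots s_{i_k}$ has $s_{i_1}\cdots s_{i_{k-1}}(\alpha_{i_k})\in\Delta^+$, and your contradiction via the maximal index $j$ with $\rho_j\in-\Delta^+$ is exactly the strong exchange / deletion argument, using $s_{i_j}(\Delta^+\setminus\{\alpha_{i_j}\})=\Delta^+\setminus\{\alpha_{i_j}\}$ and $us_\alpha u^{-1}=s_{u(\alpha)}$. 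All the routine verifications you defer (the case check for $N(ws_i)$, that a prefix of a reduced word is reduced) do go through as stated. Your closing remark also correctly anticipates the paper's point of view: the sign-change bookkeeping you use is precisely what the paper abstracts into the rainbow/shortest dichotomy on edge-colored graphs.
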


We interpret this property as a structural feature of the (edge colored) Cayley graph of a Coxeter group, which naturally leads to a formulation in terms of walks on edge-colored graphs.  
Specifically, the invariant \( \ell(w) \) corresponds to being \textit{shortest}, while \( n(w) \) corresponds to being \textit{rainbow}.
Here, a \textit{rainbow } refers to a path in which all edges are assigned distinct colors, a notion widely studied in graph theory, particularly in the context of \textit{rainbow connectivity} \cite{li2013rainbow}.

In light of this, we define the exchange property for edge-colored graphs as follows.

   \begin{definition}\label{exchange}
    A connected properly edge-colored graph is called a \textit{rainbow boomerang graph} if it satisfies the following condition: A walk is shortest if and only if it is rainbow.
\end{definition}

The rainbow boomerang graph  naturally include important examples such as Young lattices and hypercubes.  The study of the rainbow boomerang graph carried out in Section~2 can be regarded as a study of the exchange property itself. Rainbow boomerang graphs can also be characterized purely in terms of the underlying graph without reference to edge colorings \cref{ko}, and is thus expected to be of independent natural from the viewpoint of pure graph theory.

The original motivation to study rainbow boomerang graphs stems from "odd Verma's theorem" established in the companion article~\cite{Hirota2025}, which required considering the exchange property for a quotient graph of the groupoid of odd reflections. In \cite{Hirota2025}, the main results were established for basic Lie superalgebras, where verifying the exchange property of these quotient graphs could be done on a case-by-case without much difficulty. However, in order to extend these results to settings such as regular symmetrizable Kac–Moody Lie superalgebras \cite{serganova2011kac,bonfert2024weyl} and Nichols algebras of diagonal type \cite{andruskiewitsch2017finite}, it becomes both natural and necessary to develop a unified framework for treating the exchange property systematically.

The Weyl groupoids (a term we occasionally use in place of “generalized root systems,” following common conventions) in the sense of Heckenberger and Yamane \cite{heckenberger2008generalization,heckenberger2020hopf,cuntz2009weylthree}, which were introduced in the context of the classification of Nichols algebras of diagonal type \cite{heckenberger2009classification,andruskiewitsch2017finite}, are recognized as a natural generalization of Conway-Coxeter's frieze patterns \cite{cuntz2009weyltwo,cuntz2014frieze,heckenberger2020hopf} (\cref{w.5}) and the root systems of (modular) regular symmetrizable Kac-Moody Lie superalgebras \cite{azam2015classification,bonfert2024weyl,bouarroudj2009classification,kac_weisfeiler_1971}. Remarkably, the Weyl groupoid also satisfies an exchange property (\cref{poid_lwnw}) that generalizes \cref{g.lwnw}.

In Section~3, we briefly explain, in terms of the rainbow boomerang graph, how the exchange property of the Weyl groupoid is inherited by a newly introduced class of substructures called path subgroupoids.
In Section~4, we explain that path subgroupoids generalize the groupoid of odd reflections for regular symmetrizable Kac--Moody Lie superalgebras, by appealing to results of \cite{bonfert2024weyl}. This allows us to understand the exchange property of odd reflections, as established in~\cite{gorelik2022root}, as a special case of the exchange property of Weyl groupoids.

As a consequence, odd Verma's theorem for regular symmetrizable Kac--Moody Lie superalgebras holds in exactly the same form as in~\cite{Hirota2025}, provided one carefully defines the Weyl vector in this setting. Odd Verma's theorem also extends to Nichols algebras of diagonal type in light of the exchange  property of Weyl groupoids; however, its appearance differs slightly from that in~\cite{Hirota2025}. For this reason, we briefly explain the adaptation of~\cite{Hirota2025} to this setting in Section~5.

In the work of Gorelik--Hinich--Serganova~\cite{gorelik2022root,gorelik2023matsumoto}, the exchange property is also discussed in the context of a class even broader than the BKM Lie superalgebras introduced therein. They also remark that the groupoid of odd reflections is contractible, thereby already demonstrating the effectiveness of using edge-colored graphs to study the exchange property. In contrast, while our approach also employs an edge-colored graph framework, it provides a different type of generalization, carried out within an elementary setting.

\subsection{Acknowledgements}

I would like to express my heartfelt gratitude to my supervisor, Syu Kato, for his patient and extensive guidance throughout the preparation of the master's thesis, as well as for his helpful suggestions and constructive feedback. The author is also grateful to Istvan Heckenberger for his valuable discussions, comments, and helpful advice, particularly for clarifying proofs in Chapter 2 and 3.2. The author thanks Takuya Saito for suggesting the possibility of \cref{ko} during an informal conversation at a research meeting. Discussions with Yoshiyuki Koga, and Hiroyuki Yamane, to whom I am also grateful, have influenced this work.. The author would like to thank the Kumano Dormitory community at Kyoto University for their generous financial and living assistance

\section{Exchange property of edge-colored graphs}

We first recall basic terminology related to graphs.

\begin{definition}[Edge-Colored Graph]
An \emph{edge-colored graph} is a triple \( (G, \phi, C) \), where:
\begin{itemize}
    \item \( G = (V, E) \) is a graph with a vertex set \( V \) and an edge set \( E \);
    \item \( \phi: E \to C \) is a function that assigns a color \( \phi(e) \in C \) to each edge \( e \in E \), where \( C \) is a set of colors.
\end{itemize}

Additionally, we assume \(\phi\) is surjective.

The graph \( G \) is called \emph{properly colored} if for each vertex the insident edges have distinct colors.
\end{definition}

\begin{definition}

Let \( G \) be a edge-colored graph with a vertex set \( V \) and a color set \( C \).

A \emph{walk} in \( G \) is a finite non-null sequence \( W = v_0 e_1 v_1 \dots e_k v_k \), whose terms are alternately vertices and edges, such that for \( 1 \leq i \leq k \), the ends of \( e_i \) are \( v_{i-1} \) and \( v_i \). Since all the graphs we consider are properly colored, we denote the color of \( e_i \) by \( c_i \) and write \( W = v_0 c_1 v_1 \dots c_k v_k \) without the loss of generality. The \emph{length} of \( W \) is defined as \( k \), the number of edges in the walk. 

A walk \( v_0 c_1 v_1 \dots c_k v_k \) is called a \emph{path} if \( v_0, v_1, \dots, v_k \) are all distinct.

If \( W = v_0 c_1 v_1 \dots c_k v_k \) and \( W' = v_k c_{k+1} v_{k+1} \dots c_l v_l \) are walks, then the walk \( v_k c_k v_{k-1} \dots v_1 c_1 v_0 \), obtained by reversing \( W \), is denoted by \( W^{-1} \), and the walk \( v_0 c_1 v_1 \dots c_k v_k c_{k+1} v_{k+1} \dots c_l v_l \), obtained by concatenating \( W \) and \( W' \) at \( v_k \), is denoted by \( WW' \).

A walk is closed if its origin and terminus are the same.
 A closed walk of length \( 0 \) is called an \emph{empty path}, and of length \( 1 \) is called a \emph{loop}.

 A walk is termed \emph{shortest} if there is no shorter walk between the same pair of vertices. Note that  a shortest walk is a path. In a connected graph, for any pair of vertices, a shortest walk (not necessarily unique) always exists.

 A walk is called a \emph{rainbow} if all the edge colors in its sequence are distinct.
    
\end{definition}

\begin{definition}[Rainbow Boomerang Graph]
A edge-colored graph \( G \) which is properly colored is called a \emph{rainbow boomerang graph} when a walk is shortest if and only if it is rainbow.
\end{definition}

\begin{remark}
By definition, a rainbow boomerang graph is  loopless and multiedge-free. 
\end{remark}

\begin{lemma}\label{lemma:odd_cycle} 
A rainbow boomerang graph does not contain any closed walk of odd length. In other words, it is bipartite.

\begin{proof}
To show that a rainbow boomerang graph does not contain a closed walk of length \( 2k+1 \), let \( W = v_0 c_1 v_1 \dots v_{2k} c_{2k+1} v_0 \). We prove by induction on \( k \).

For \( k = 0 \), the walk is a loop, which is impossible.

Now suppose \( k > 0 \). If there exists a subwalk \( v_0 c_1 v_1 \dots v_{l-1} c_l v_l \) of length \( l \leq k \) that is not shortest, then there exists \( s < l \) and a walk \( v_0 d_1 w_1 \dots w_{s-1} d_s v_l \), and the closed walk \( W \) can be decomposed into two smaller closed walks:
\[
v_0 d_1 w_1 \dots w_{s-1} d_s v_l c_l v_{l-1} \dots v_1 c_1 v_0
\]
and
\[
v_0 d_1 w_1 \dots w_{s-1} d_s v_l c_{l+1} v_{l+1} \dots v_{2k} c_{2k+1} v_0.
\]
The lengths of these walks are \( s + l \) and \( (2k+1 - l + s )\), respectively. One of these walks must have odd length, reducing the problem to a smaller \( k \).

Therefore, we may assume all subwalks of length \( \leq k \) are shortest. 

The subwalks of length \( k \), \( v_0 c_1 v_1 \dots c_k v_k \) and \( v_1 c_2 v_2 \dots c_k v_k c_{k+1} v_{k+1} \), are shortest.  
Since the walk of length \( k+1 \), \( v_0 c_1 v_1 \dots c_k v_k c_{k+1} v_{k+1} \), is not shortest, it cannot be a rainbow walk.

These imply that \( c_1 = c_{k+1} \). By the same reasoning, we also have \( c_{2k+1} = c_{k+1} \).
Thus, we have \( c_1 = c_{2k+1} \), which contradicts the proper coloring. Therefore, we conclude the result.

\end{proof}
\end{lemma}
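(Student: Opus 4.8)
The plan is to argue by contradiction. Suppose the rainbow boomerang graph $G$ contains a closed walk of odd length, and among all such walks fix one, $W = v_0 c_1 v_1 \dots v_{2k} c_{2k+1} v_0$, whose length $2k+1$ is as small as possible; I induct on $k$. The base case $k=0$ is a loop, and this is already impossible: the constant walk at $v_0$ is strictly shorter than the loop, so the loop is not shortest, hence not rainbow by the defining property of $G$ — yet a single-edge walk is trivially rainbow (this is the content of the remark just above). So assume $k \ge 1$.

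The first step of the inductive argument is a normalization: I claim we may assume that every (contiguous, non-wrapping) subwalk of $W$ of length at most $k$ is shortest. Suppose not, and let $P = v_i c_{i+1} \dots c_j v_j$ be such a subwalk, of length $\ell \le k$, that is not shortest; replace it by a walk $Q$ of length $s < \ell$ with the same endpoints. Traversing $P$ and then $Q$ backwards gives a closed walk of length $\ell + s$, while substituting $Q$ for $P$ inside $W$ gives a closed walk of length $2k+1-\ell+s$. The sum of these two lengths is $2k+1+2s$, which is odd, so exactly one of them is odd; and since $\ell \le k$ and $s < \ell$, both are strictly less than $2k+1$. This contradicts minimality and proves the claim. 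In particular, both length-$k$ subwalks $v_0 c_1 \dots c_k v_k$ and $v_1 c_2 \dots c_{k+1} v_{k+1}$ are shortest, hence rainbow, so $c_1,\dots,c_k$ are pairwise distinct and so are $c_2,\dots,c_{k+1}$.

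Next I exploit the complementary arc. The portion $v_{k+1} c_{k+2} \dots c_{2k+1} v_0$ is a walk of length $k$ between $v_0$ and $v_{k+1}$, so the length-$(k+1)$ subwalk $v_0 c_1 \dots c_{k+1} v_{k+1}$ is not shortest, hence not rainbow: some color repeats among $c_1,\dots,c_{k+1}$. Given the two distinctness statements above, the only possibility is $c_1 = c_{k+1}$. Applying the identical reasoning to the reversed closed walk $W^{-1}$ (also odd, also of minimal length) yields $c_{2k+1} = c_{k+1}$, and therefore $c_1 = c_{2k+1}$. But the edges colored $c_1$ and $c_{2k+1}$ are both incident to $v_0$, contradicting proper coloring; this closes the induction and proves the lemma, and with it bipartiteness.

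I expect the normalization step to be the crux: one must pick the decomposition of $W$ so that \emph{both} resulting closed walks are strictly shorter — which is exactly why the threshold is length $\le k$ rather than the naive $\le 2k$ — and then use a parity count to guarantee that a shorter \emph{odd} closed walk survives to feed the induction. Once all short subwalks are known to be rainbow, pinning the repeated color down to precisely $c_1 = c_{k+1}$, together with the mirror-image identity $c_{2k+1} = c_{k+1}$, are quick consequences that collide at $v_0$.
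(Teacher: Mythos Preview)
Your argument is correct and follows essentially the same route as the paper's proof: induction (equivalently, minimal counterexample) on $k$, the same decomposition trick to reduce to the case where all subwalks of length $\le k$ are shortest, and then the identical endgame pinning down $c_1 = c_{k+1} = c_{2k+1}$ to contradict proper coloring at $v_0$. Your write-up is in fact slightly more explicit than the paper's in two places---you spell out why both decomposed closed walks are strictly shorter than $2k+1$, and you invoke $W^{-1}$ rather than the vaguer ``by the same reasoning'' to obtain $c_{2k+1}=c_{k+1}$---but there is no substantive difference in strategy.
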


\begin{example}
In the edge colored graphs below, a walk is rainbow if and only if it is shortest walk. However, these graphs are not properly coloered, so are not rainbow boomerang.

\begin{center}
\begin{tikzpicture}
    % Nodes
    \node (B) at (-1.5, 0) {};
    \node (C) at (1.5, 0) {};

    % Edges with labels
    \draw (B) to[bend left=30] node[midway, above] {\( c \)} (C);
    \draw (B) to[bend right=30] node[midway, below] {\( c \)} (C);
 \fill (B) circle (2pt);
    \fill (C) circle (2pt);

\end{tikzpicture}
\hspace{1cm}
\begin{tikzpicture}
    % First graph
    % Define nodes
    \node (A) at (0, 1.5) {};
    \node (B) at (-1.5, 0) {};
    \node (C) at (1.5, 0) {};

    % Draw edges with labels
    \draw (A) -- node[left] {\( c \)} (B);
    \draw (B) -- node[below] {\( c \)} (C);
    \draw (C) -- node[right] {\( c \)} (A);

    % Draw nodes
    \fill (A) circle (2pt);
    \fill (B) circle (2pt);
    \fill (C) circle (2pt);
\end{tikzpicture}
\hspace{1cm}
\begin{tikzpicture}
    % Second graph
    % Define vertices with black fill
    \foreach \i in {1,2,3,4,5} {
        \node[draw, fill=black, circle, inner sep=1pt] (V\i) at (72*\i:2cm) {};
    }
    
    % Draw edges with label "c"
    \foreach \i/\j in {1/2, 2/3, 3/4, 4/5, 5/1, 1/3, 1/4, 2/4, 2/5, 3/5} {
        \draw (V\i) -- (V\j) node[midway, above] {\( c \)};
    }
\end{tikzpicture}
\end{center}

\end{example}

\begin{example}\label{sq}
Consider when the following square graph is a rainbow boomerang graph.  
Suppose that \( c_1, \dots, c_4 \) are distinct. Then, the walk  
\( v_1 c_1 v_2 c_2 v_3 c_3 v_4 \) is a rainbow walk, but since the shorter walk \( v_1 c_4 v_4 \) exists, it is not shortest.  
Thus, the graph is not a rainbow boomerang graph.  

Taking into account the condition of proper coloring, the edge-colored graph below is a rainbow boomerang graph if and only if \( c_1 = c_3 \neq c_2 = c_4 \).

\begin{tikzpicture}
    % Nodes (vertices)
    \node (v1) at (0,0) {\( v_1 \)};
    \node (v2) at (2,0) {\( v_2 \)};
    \node (v3) at (2,2) {\( v_3 \)};
    \node (v4) at (0,2) {\( v_4 \)};
    
    % Edges with labels
    \draw (v1) -- (v2) node[midway, below] {\( c_1 \)};
    \draw (v2) -- (v3) node[midway, right] {\( c_2 \)};
    \draw (v3) -- (v4) node[midway, above] {\( c_3 \)};
    \draw (v4) -- (v1) node[midway, left] {\( c_4 \)};
\end{tikzpicture}
\end{example}

\begin{lemma}\label{unipue_colors}
For any two points \( x, y \) in a rainbow boomerang graph, if a rainbow shortest path 
\( W = x c_1 v_1 c_2 \dots v_{l-1} c_l y \) is chosen, the set of colors \( \{ c_0, c_1, \dots, c_l \} \) does not depend on the choice of the rainbow shortest path \( W \).
\end{lemma}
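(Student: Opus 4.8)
The plan is to fix two rainbow shortest paths $W = x c_1 v_1 c_2 \dots v_{l-1} c_l y$ and $W' = x d_1 w_1 d_2 \dots w_{l-1} d_l y$ (both have length $l = d(x,y)$, the graph distance, since shortest walks are paths of equal length), and to show $\{c_1,\dots,c_l\} = \{d_1,\dots,d_l\}$. By symmetry it suffices to show every $d_j$ occurs among the $c_i$. Since the two color multisets have the same size $l$ and, being rainbow, no repetitions, showing one inclusion gives equality.

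The key idea is to argue by contradiction using concatenation and \cref{lemma:odd_cycle}. Suppose some color, say $d_j$, does not appear in $\{c_1,\dots,c_l\}$. Consider the closed walk $W (W')^{-1}$ from $x$ to $x$; it has even length $2l$ by \cref{lemma:odd_cycle} (consistent), but more usefully I want to extract a shorter rainbow-violating configuration. The cleaner route: take the prefix $W'_j = x d_1 w_1 \dots w_{j-1} d_j w_j$ of $W'$ (a shortest path of length $j$, hence rainbow), and the walk $U = w_j d_{j+1} \dots d_l y$ followed by $W^{-1} = y c_l v_{l-1} \dots c_1 x$, which is a walk from $w_j$ to $x$ of length $(l-j) + l$. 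A shortest walk from $x$ to $w_j$ has length $j$ (since $W'_j$ is shortest), so concatenating $W'_j$ with that reversed shortest walk and comparing forces an exchange. Concretely: the walk $x \,d_1\, w_1 \dots d_j\, w_j$ has length $j$ and is shortest; append the edge colored $d_{j}$... — here I need to be careful. The robust approach is: among all pairs of rainbow shortest $x$–$y$ paths whose color sets differ, pick one minimizing $l$; then look at the last edges $c_l$ and $d_l$. If $c_l = d_l$ then $v_{l-1}$ and $w_{l-1}$ are joined to $y$ by edges of the same color, so by proper coloring $v_{l-1} = w_{l-1}$, and the truncated paths to $v_{l-1} = w_{l-1}$ are rainbow shortest paths of length $l-1$ with differing color sets — contradicting minimality. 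Hence $c_l \neq d_l$, and symmetrically $c_1 \neq d_1$.

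Now with $c_l \neq d_l$, consider the path $P = x c_1 v_1 \dots v_{l-1} c_l y$ extended by going from $y$ to $w_{l-1}$ via the edge colored $d_l$: the walk $x c_1 v_1 \dots c_l y d_l w_{l-1}$ has length $l+1$. It is not shortest (distance from $x$ to $w_{l-1}$ is $l-1$, since $w_{l-1}$ lies on the shortest path $W'$), hence by the rainbow boomerang property it is not rainbow, so $d_l \in \{c_1,\dots,c_l\}$; since $d_l \neq c_l$, we get $d_l = c_i$ for some $i < l$. This is the crux: it shows the missing-color assumption must fail for $d_l$. To finish, I iterate/induct: having established $d_l$ appears among the $c_i$, I want to reduce $l$. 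Here I would use the standard exchange manoeuvre — take $i$ with $c_i = d_l$; then $x c_1 v_1 \dots v_{i-1} c_i v_i$ and the detour $x c_1 \dots v_{i-1}$ then $\dots$ — splice $W$ and $W'$ along the common color to produce two shorter rainbow shortest paths (between $x$ and an intermediate vertex) with differing color sets, contradicting minimality of $l$.

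The main obstacle I anticipate is making the splicing argument clean: when $c_i = d_l$ but the edges sit at different positions, one must check that cutting $W$ at position $i$ and $W'$ at position $l$ and recombining actually yields \emph{shortest} walks (so that they are rainbow and the inductive hypothesis applies), which requires invoking that subwalks of shortest walks are shortest and possibly re-running the \cref{lemma:odd_cycle}-style parity/decomposition argument to rule out length defects. An alternative that may sidestep the bookkeeping: prove the statement by strong induction on $l = d(x,y)$ directly — for the inductive step, fix a common neighbor structure near $y$. If all rainbow shortest $x$–$y$ paths end with the same color $c$, then they all pass through the unique $c$-neighbor $z$ of $y$, and $d(x,z) = l-1$, so the color sets are $\{c\} \sqcup (\text{color set of rainbow shortest } x\text{–}z \text{ paths})$, which is well-defined by induction. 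If two rainbow shortest paths end with different colors $c \neq c'$ at $y$, use the extension-is-not-rainbow argument above to show $c'$ lies in the first path's color set and $c$ in the second's, then combine with the induction hypothesis applied at the $c$- and $c'$-neighbors of $y$ to force all color sets to coincide. I would present whichever of these two formulations turns out shorter once the details are written out; both rest on the same two tools, \cref{lemma:odd_cycle} and the definition of rainbow boomerang graph.
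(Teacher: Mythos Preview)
Your first approach---minimal counterexample plus splicing---is essentially the paper's proof, recast from induction on $l$ into minimal-counterexample form. The paper frames it as a closed walk $v_0 c_1 \ldots c_l v_l d_1 \ldots d_l v_0$ of length $2l$ with $d(v_0,v_l)=l$; it shows $d_1=c_i$ for some $i$ by exactly your ``extension is not rainbow'' argument, and then (this is the splice you are looking for) observes that the subwalk $v_{i-1} c_i v_i \ldots c_l v_l d_1 v_{l+1}$ repeats the color $c_i=d_1$, hence is not shortest, hence there is a rainbow shortcut $R$ from $v_{i-1}$ to $v_{l+1}$ of length exactly $l-i$ (triangle inequality plus bipartiteness pin the length). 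Concatenating $R$ with the appropriate prefixes and suffixes yields two strictly shorter instances---one between $v_0$ and $v_{l+1}$ at distance $l-1$, one between $v_{i-1}$ and $v_l$ at distance $l-i+1$---to which the inductive hypothesis applies, and combining the two equalities of color sets gives $\{c_1,\ldots,c_l\}=\{d_1,\ldots,d_l\}$. The boundary case $i=1$ (your $d_l=c_1$) is handled by shifting the basepoint along the closed walk and repeating. So your anticipated obstacle is real but surmountable, and the fix is precisely the shortcut~$R$.

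Your second approach, however, has a genuine gap. After showing $c'\in C(x,v_{l-1})$ and $c\in C(x,w_{l-1})$, you propose to ``combine with the induction hypothesis applied at the $c$- and $c'$-neighbors of $y$''. But the induction hypothesis only tells you that $C(x,v_{l-1})$ and $C(x,w_{l-1})$ are each well-defined; it gives you no relation \emph{between} them. From $c'\in C(x,v_{l-1})$, $c\in C(x,w_{l-1})$, $c\notin C(x,v_{l-1})$, $c'\notin C(x,w_{l-1})$ you only get $\{c,c'\}\subseteq C(x,v_{l-1})\,\triangle\, C(x,w_{l-1})$; equality of $C(x,v_{l-1})\cup\{c\}$ and $C(x,w_{l-1})\cup\{c'\}$ would require this symmetric difference to be \emph{exactly} $\{c,c'\}$, which is the cocycle-type statement proved only later (\cref{2.1.kiketu_rb2}) and which itself relies on the lemma you are proving. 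To close the gap you must produce an auxiliary path linking $v_{l-1}$ and $w_{l-1}$ through a vertex closer to $x$---and that is exactly the shortcut $R$ of approach~1. So approach~2 does not stand on its own; once repaired, it collapses into approach~1.
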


\begin{proof}
To prove the claim, consider a closed walk of length \( 2l \):
\[
v_0 c_1 v_1 c_2 \dots v_{l-1} c_l v_l d_1 v_{l+1} \dots v_{2l-1} d_l v_0,
\]
and assume that the distance between \( v_0 \) and \( v_l \) is \( l \). We aim to show that \( \{ c_1, \dots, c_l \} = \{ d_1, \dots, d_l \} \) by induction on \( l \).

If \( l = 0, 1 \), the assertion is trivial. If \( l = 2 \), it follows from \cref{sq}.

For \( l > 2 \), suppose \( d_1 \neq c_i \) for all \( i \). Then, the walk \( v_0 c_1 v_1 c_2 \dots c_l v_l d_1 v_{l+1} \) would be rainbow, but since a shorter walk \( v_0 d_l v_{2l-1} \dots v_{l+2} d_2 v_{l+1} \) exists, it is not shortest. Hence, we must have \( d_1 = c_i \) for some \( 1 \leq i < l \). In this case, the walk \( v_{i-1} c_i v_i \dots c_l v_l c_i v_{l+1} \) is not rainbow. Thus, there exists a walk of length less than \((l - i + 2 )\) between \( v_{l+1} \) and \( v_{i-1} \).

In particular, if \( i \neq 1 \), then the closed walk can be decomposed into smaller closed walks, reducing the length. By a similar argument, unless \( d_1 = c_1, d_2 = c_2, \dots, d_l = c_l \), the walk can always be divided into smaller closed walks, completing the proof.
\end{proof}

\begin{definition}
For any two points \( x, y \) in the same connected component of a rainbow boomerang graph, 
define the subset \( C(x, y) \subseteq C \) as the set of colors that appear in a rainbow shortest path between \( x \) and \( y \).
\end{definition}

The following proposition (see also \cite[Propsition 5.3.5]{gorelik2022root}) serves as the motivation for naming these graphs \emph{rainbow boomerang} and is fundamental for establishing odd Verma theorem. This is also often called exchange property.
\begin{proposition}\label{g.rb2.kai}
Let \( G \) be a rainbow boomerang graph. Let \( k \) be a positive integer.  
If there exists a rainbow walk  
\(
v_0 c_0 v_1 c_1 \dots c_k v_{k+1}
\)
and an edge \( v_{k+1} c_0 v_{k+2} \),  
then there exists a rainbow walk  
\(
v_{k+2} d_1 v_{k+3} d_2 \dots v_{2k+1} d_k v_0
\)
such that \( \{ c_1, c_2, \dots, c_k \} = \{ d_1, d_2, \dots, d_k \} \).
\begin{center}
\begin{tikzpicture}
    % ノードの配置 (上段)
    \node (A1) at (0, 0) {$v_1$};
    \node (A2) at (2, 0) {$v_2$};
    \node (A3) at (4, 0) {$v_3$};
    \node (Ak1) at (6, 0) {$v_{k}$};
    \node (Ak) at (8, 0) {$v_{k+1}$};

    % ノードの配置 (下段)
    \node (B1) at (0, -2) {$v_0$};
    \node (B2) at (2, -2) {$v_{2k+1}$};
    \node (B3) at (4, -2) {$v_{2k}$};
    \node (Bk1) at (6, -2) {$v_{k+3}$};
    \node (Bk) at (8, -2) {$v_{k+2}$};

    % 上段のエッジとラベル付け
    \draw (A1) -- node[above] {$c_1$} (A2);
    \draw (A2) -- node[above] {$c_2$} (A3);
    \draw[dotted] (A3) -- (Ak1); % 点線で省略部分
    \draw (Ak1) -- node[above] {$c_k$} (Ak);

    % 下段のエッジとラベル付け
    \draw[dashed] (B1) -- node[below] {$d_k$} (B2);
    \draw[dashed] (B2) -- node[below] {$d_{k-1}$} (B3);
    \draw[dotted] (B3) -- (Bk1); % 点線で省略部分
    \draw[dashed] (Bk1) -- node[below] {$d_1$} (Bk);

    % 縦のエッジとラベル付け
    \draw (A1) -- node[left] {$c_0$} (B1);
    \draw (Ak) -- node[right] {$c_0$} (Bk);
\end{tikzpicture}
\end{center}
\end{proposition}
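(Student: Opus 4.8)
The plan is to deduce everything from the well-definedness of the color set $C(x,y)$ (\cref{unipue_colors}) together with bipartiteness (\cref{lemma:odd_cycle}). Write $d(\cdot,\cdot)$ for graph distance. The one auxiliary fact I would isolate first is an \emph{edge-prepending principle}: if $x$ is adjacent to $y$ via an edge of color $c$ and $z$ is a vertex with $d(x,z)=d(y,z)+1$, then $c\notin C(y,z)$ and $C(x,z)=\{c\}\sqcup C(y,z)$. Indeed, concatenating the edge $x\,c\,y$ with a rainbow shortest path from $y$ to $z$ yields a walk from $x$ to $z$ of length $d(y,z)+1=d(x,z)$, hence a shortest walk, hence rainbow; reading off its colors and applying \cref{unipue_colors} gives the claim.

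Next I would pin down the two distances that matter. Since the walk $v_0c_0v_1c_1\cdots c_kv_{k+1}$ is rainbow, the colors $c_0,c_1,\dots,c_k$ are pairwise distinct, so $v_1c_1v_2\cdots c_kv_{k+1}c_0v_{k+2}$ is again rainbow; it has length $k+1$, hence is shortest, so $d(v_1,v_{k+2})=k+1$ and $C(v_1,v_{k+2})=\{c_0,c_1,\dots,c_k\}$. On the other hand the length-$(k+2)$ walk $v_0c_0v_1c_1\cdots c_kv_{k+1}c_0v_{k+2}$ repeats the color $c_0$, so it is not rainbow, hence not shortest, giving $d(v_0,v_{k+2})\le k+1$. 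Combining this with $d(v_0,v_{k+2})\ge d(v_1,v_{k+2})-1=k$ and the fact that, by \cref{lemma:odd_cycle}, $d(v_0,v_{k+2})$ has parity opposite to $d(v_1,v_{k+2})=k+1$, I conclude $d(v_0,v_{k+2})=k$.

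Finally I would apply the edge-prepending principle with $(x,y,c,z)=(v_1,v_0,c_0,v_{k+2})$, whose hypothesis $d(v_1,v_{k+2})=d(v_0,v_{k+2})+1$ was just verified, to obtain $C(v_1,v_{k+2})=\{c_0\}\sqcup C(v_0,v_{k+2})$. Since $C(v_1,v_{k+2})=\{c_0,c_1,\dots,c_k\}$, this forces $C(v_0,v_{k+2})=\{c_1,\dots,c_k\}$. Now any rainbow shortest path from $v_{k+2}$ to $v_0$ has length $d(v_0,v_{k+2})=k$ and, by \cref{unipue_colors}, color set $C(v_0,v_{k+2})=\{c_1,\dots,c_k\}$; relabeling its intermediate vertices, it reads $v_{k+2}d_1v_{k+3}d_2\cdots v_{2k+1}d_kv_0$ with $\{d_1,\dots,d_k\}=\{c_1,\dots,c_k\}$, which is exactly the assertion.

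The only step that is not pure bookkeeping is forcing $d(v_0,v_{k+2})=k$ rather than $k+1$, and this is precisely where bipartiteness enters; the rest is careful but routine tracking of color sets through \cref{unipue_colors}. (As a sanity check, proper coloring and the multiedge-free property rule out $v_{k+2}\in\{v_k,v_{k+1}\}$, while any coincidence $v_{k+2}=v_i$ with $i<k$ is harmless to the distance argument.)
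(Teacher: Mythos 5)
Your proposal is correct and takes essentially the same route as the paper: both establish \( d(v_0,v_{k+2})=k \) by combining the non-rainbow (hence non-shortest) length-\((k+2)\) closed walk, a triangle-inequality lower bound, and bipartiteness, and both then use the well-definedness of \( C(\cdot,\cdot) \) (\cref{unipue_colors}) applied to the two length-\((k+1)\) shortest paths from \( v_1 \) to \( v_{k+2} \) to match the color sets. Your ``edge-prepending principle'' is just a tidy repackaging of the final step the paper leaves to the reader when it says ``the remaining claims follow from \cref{unipue_colors}.''
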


\begin{proof}
Consider the walk of length \( k+2 \):  
\(
v_0 c_0 v_{1} c_1 v_2 c_2 \dots c_k v_{k+1} c_0 v_{k+2},
\)
which is not rainbow. Hence, there must exist a shortest walk of length \( k+1 \) or less from \( v_{k+2} \) to \( v_0 \).  
If the length is \( k+1 \), this would result in an odd-length closed walk, which contradicts the fact that \( G \) is bipartite.  
If the length is \( k-1 \) or less, it would contradict the assumption that the walk of length \( k+1 \),  
\(
v_0 c_0 v_1 c_1 \dots c_k v_{k+1},
\)
is shortest.

Consequently, there exists a rainbow walk:  
\(
v_{k+2} d_1 v_{k+3} d_2 \dots v_{2k+1} d_k v_0.
\)

The remaining claims follow from \cref{unipue_colors}.
\end{proof}

\begin{lemma}\label{2.1.kiketu_rb2}
Let 
\(
v_0 c_1 v_1 c_2 v_2 \dots c_k v_k
\)
be a walk in the rainbow boomerang graph \( G \). Then \( C(v_0, v_k) \) is nothing other than the set of colors that appear an odd number of times along this walk.
\end{lemma}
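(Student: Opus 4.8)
The plan is to induct on the length $k$ of the walk, using \Cref{g.rb2.kai} (the boomerang/exchange property) as the main engine for the inductive step. Write $O(W)$ for the set of colors appearing an odd number of times along a walk $W$; the claim is $C(v_0,v_k)=O(W)$ where $W = v_0 c_1 v_1 \dots c_k v_k$. The base cases $k=0$ (empty path, $C(v_0,v_0)=\emptyset$) and $k=1$ (single edge, $C=\{c_1\}$) are immediate. For the inductive step, write $W = W' c_k v_k$ where $W' = v_0 c_1 v_1 \dots c_{k-1} v_{k-1}$, so by induction $C(v_0,v_{k-1}) = O(W')$.

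The key step is to understand how $C(v_0,v_k)$ relates to $C(v_0,v_{k-1})$ after appending the edge $v_{k-1}c_kv_k$. I would choose a rainbow shortest path $P$ from $v_0$ to $v_{k-1}$; by \Cref{unipue_colors} its color set is exactly $C(v_0,v_{k-1})=O(W')$. Now there are two cases. If $c_k \notin C(v_0,v_{k-1})$, then $P$ followed by the edge $v_{k-1}c_kv_k$ is still rainbow; I must check it is shortest — if it were not, bipartiteness (\Cref{lemma:odd_cycle}) together with the fact that $P$ is shortest forces a shorter path from $v_0$ to $v_k$ of length $|P|-1$, and then re-appending reasoning (or directly \Cref{g.rb2.kai}) yields a contradiction with $c_k\notin C(v_0,v_{k-1})$. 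Hence $C(v_0,v_k) = C(v_0,v_{k-1}) \cup \{c_k\} = O(W') \cup \{c_k\} = O(W)$, the last equality because $c_k\notin O(W')$. If instead $c_k \in C(v_0,v_{k-1})$, then I want to apply \Cref{g.rb2.kai}: the rainbow shortest path $P = v_0 e_1 \cdots$ has some edge colored $c_k$; reading $P^{-1}$ as a rainbow walk out of $v_{k-1}$ whose last edge can be taken to be the $c_k$-edge (using \Cref{g.rb2.kai} to rotate the $c_k$-edge appropriately), appending $v_{k-1}c_kv_k$ produces, via the exchange property, a rainbow shortest path from $v_0$ to $v_k$ whose color set is $C(v_0,v_{k-1})\setminus\{c_k\} = O(W')\setminus\{c_k\} = O(W)$.

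I expect the main obstacle to be the bookkeeping in the case $c_k\in C(v_0,v_{k-1})$: \Cref{g.rb2.kai} as stated takes a rainbow walk $v_0c_0v_1c_1\cdots c_kv_{k+1}$ together with a $c_0$-colored edge $v_{k+1}c_0v_{k+2}$ and produces a rainbow walk from $v_{k+2}$ back to $v_0$ with the same intermediate color set, so to use it I need to present $P$ (or a rotation of it) in the form "edge of color $c_k$, then a rainbow walk, then edge of color $c_k$" — i.e. I must first argue that the $c_k$-edge can be taken to be an end edge of a rainbow shortest path between the relevant pair of vertices. This is exactly the kind of "move the repeated color to the boundary" manipulation that \Cref{g.rb2.kai} and \Cref{unipue_colors} are designed to support, so it should go through, but it requires care to set up the vertices so that the hypotheses of \Cref{g.rb2.kai} are literally met. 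A cleaner alternative, which I would also consider, is to avoid cases entirely: take a rainbow shortest path $P$ from $v_0$ to $v_{k-1}$, form the closed walk $P^{-1}W$ of length $|P|+k$ from $v_{k-1}$ to itself, and induct on this closed walk's length directly, repeatedly splitting off short non-shortest subwalks exactly as in the proofs of \Cref{lemma:odd_cycle} and \Cref{unipue_colors} — each such split replaces a subwalk by a shorter one with the same odd-occurrence color set (parity is preserved under such replacements because the removed subwalk has even length with empty odd-color set, or can be chosen so), and the recursion bottoms out at a rainbow shortest path, whose color set is $C(v_0,v_k)$ by \Cref{unipue_colors}.
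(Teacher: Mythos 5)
Your proposal is correct, but it takes a different inductive decomposition than the paper. You peel off the \emph{last} edge of $W$ and split into two cases according to whether $c_k$ lies in $C(v_0, v_{k-1})$; the case $c_k \notin C(v_0, v_{k-1})$ is immediate (appending $c_k$ to a rainbow shortest path from $v_0$ to $v_{k-1}$ is again rainbow, hence shortest by the defining property, so no bipartiteness argument or contradiction is really needed there), while the case $c_k \in C(v_0, v_{k-1})$ requires your more delicate use of \cref{g.rb2.kai} to excise the duplicated color, or equivalently the distance argument showing $d(v_0,v_k)=d(v_0,v_{k-1})-1$ and then reading off $C(v_0,v_k)$ by re-appending $c_k$. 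The paper instead scans $W$ for the \emph{first} repeated color pair $c_j = c_i$, applies \cref{g.rb2.kai} to the bracketed subwalk $v_{j-1}c_j\cdots c_i v_i$ to replace it with a rainbow walk two steps shorter, and thereby shortens the whole walk by $2$ while preserving the parity of each color's occurrence count; induction then finishes without any case split. The paper's route is a bit leaner — one application of \cref{g.rb2.kai} per step, no need for \cref{unipue_colors} or bipartiteness in the step — and it is the same ``find a repeat, collapse it'' mechanism used to prove \cref{unipue_colors} itself. Your closing alternative (form the closed walk $P^{-1}W$ and recurse by splitting off non-shortest subwalks) is closer in spirit to the paper, though the paper works directly on $W$ rather than on a closed walk. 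Both your main argument and your alternative are sound once the bookkeeping is done; the only soft spot is the phrase about ``rotating the $c_k$-edge'' in your second case, which reads as if an extra maneuver is needed, whereas in fact one simply takes the suffix of $P$ beginning at its $c_k$-edge and feeds that suffix plus the appended edge $v_{k-1}c_kv_k$ to \cref{g.rb2.kai} as stated.
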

\begin{proof}
We proceed by induction on \( k \).

For \( k = 0, 1 \), the statement is trivial.

Now, assume \( k > 1 \). If \( \{c_1, \dots, c_k\} \) has no repetitions, there is nothing to prove. Thus, assume that \( \{c_1, c_2, \dots, c_k\} \) contains repetitions. Let \( i \) (\(1 < i \leq k\)) be the smallest index such that \( c_i \) repeats in \( \{c_1, c_2, \dots, c_i\} \). Let \( j \) (\(1 \leq j < i\)) be the unique index such that \( c_j = c_i \).

The subwalk
\(
v_{j-1} c_{j} v_j c_{j-1} \dots v_{i-2} c_{i-1} v_{i-1}
\)
is rainbow. Applying \cref{g.rb2.kai} to the subwalk
\(
v_{j-1} c_j v_j \dots v_{i-1} c_i v_i,
\)
we obtain a walk of length \( k-2 \) from \( v_0 \) to \( v_k \) such that the parity of the occurrences of each color is the same as in the original walk.

Thus, the length of the walk decreases, and the claim follows by induction.
\end{proof}

\begin{corollary}\label{g.RB1}
   For any three points \( x, y, z \) in the same connected component of a rainbow boomerang graph, 
\( y = z \) if and only if \( C(x, y) = C(x, z) \).
\end{corollary}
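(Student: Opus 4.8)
The forward implication is immediate: if $y=z$ then trivially $C(x,y)=C(x,z)$, so the plan is to prove the converse, that $C(x,y)=C(x,z)$ forces $y=z$.

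First I would fix a rainbow shortest path $W_1$ from $x$ to $y$ and a rainbow shortest path $W_2$ from $x$ to $z$; both exist because the graph is connected and, by the defining property of a rainbow boomerang graph, every shortest walk is rainbow. By the definition of $C(\cdot,\cdot)$ together with \cref{unipue_colors}, the set of colors occurring along $W_1$ is exactly $C(x,y)$ and the set occurring along $W_2$ is exactly $C(x,z)$; since each path is rainbow, every color in these sets is used exactly once along the respective path.

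Next I would concatenate to form the walk $W_1^{-1}W_2$ from $y$ to $z$, that is, traverse $W_1$ backwards to reach $x$ and then $W_2$ forwards to reach $z$. Under the hypothesis $C(x,y)=C(x,z)$, each color of this common set is used exactly twice along $W_1^{-1}W_2$ (once in each half) and every other color is used zero times; hence every color occurs an even number of times. Applying \cref{2.1.kiketu_rb2} to this walk gives $C(y,z)=\emptyset$. Finally, a shortest walk from $y$ to $z$ exists, is rainbow by the defining property, and its color set is by definition $C(y,z)=\emptyset$, so it has length $0$; therefore $y=z$.

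The whole argument is a parity count of colors along a concatenated walk, so I do not expect a substantial obstacle; the only step requiring care is the last one, where one must invoke the ``shortest $\iff$ rainbow'' equivalence to turn the emptiness of $C(y,z)$ into the equality $y=z$.
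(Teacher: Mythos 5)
Your proof is correct and follows essentially the same route as the paper: the converse direction is exactly the application of \cref{2.1.kiketu_rb2} to the concatenated walk $W_1^{-1}W_2$, giving $C(y,z)=\emptyset$ and hence $y=z$. You have merely spelled out the parity count and the final step that the paper leaves implicit.
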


\begin{proof}
If \( y = z \), it is clear that \( C(x, y) = C(x, z) \).

Conversely, if \( C(x, y) = C(x, z) \), then by \cref{2.1.kiketu_rb2}, we must have \( C(y, z) = \emptyset \).
\end{proof}

\begin{corollary} \label{wwwww}
For a connected rainbow boomerang graph \( G \) and a color \( c \) of \( G \), the edge-colored graph obtained by removing all edges of color \( c \) from \( G \) consists of two connected components, each of which is a rainbow boomerang graph.
\end{corollary}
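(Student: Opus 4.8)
The plan is to describe the two components of the color-\(c\)-deleted graph explicitly via the invariant \(C(x,\,\cdot\,)\) and then transfer the rainbow boomerang property to each by showing that geodesics within one component never use color \(c\). First I would fix a base vertex \(x\) and, since \(c\) is a color of \(G\) (so \(\phi^{-1}(c)\neq\emptyset\)), an edge \(x\,c\,x'\); as \(G\) is loopless this edge is a shortest, hence rainbow, path from \(x\) to \(x'\), so \(C(x,x')=\{c\}\) by \cref{unipue_colors}. Write \(G'\) for \(G\) with all color-\(c\) edges removed. I claim the connected component of \(x\) in \(G'\) is \(V_1:=\{z\mid c\notin C(x,z)\}\): if \(c\notin C(x,z)\), a rainbow shortest \(x\)--\(z\) path avoids color \(c\) and survives in \(G'\); conversely any \(x\)--\(z\) walk inside \(G'\) uses color \(c\) zero times, so \(c\notin C(x,z)\) by \cref{2.1.kiketu_rb2}. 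The same reasoning shows the component of \(x'\) in \(G'\) is \(\{z\mid c\notin C(x',z)\}\), and this set equals \(V_2:=\{z\mid c\in C(x,z)\}\): if \(c\in C(x,z)\), prepending the edge \(x'\,c\,x\) to a rainbow shortest \(x\)--\(z\) path gives a walk along which \(c\) occurs exactly twice and every other color of \(C(x,z)\) exactly once, whence \(c\notin C(x',z)\) by \cref{2.1.kiketu_rb2}, and the reverse inclusion is the symmetric computation (prepend \(x\,c\,x'\) to a rainbow shortest \(x'\)--\(z\) path). As \(V_1,V_2\) partition \(V\), both are nonempty (\(x\in V_1\), \(x'\in V_2\)), and \(x,x'\) lie in different components of \(G'\) (else \(c\notin C(x,x')=\{c\}\)), so \(G'\) has exactly the two components \(V_1\) and \(V_2\).

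Next, writing \(G_1\) for the subgraph of \(G'\) induced on \(V_1\) with the restricted proper coloring, I would show \(G_1\) is a rainbow boomerang graph (the case of \(V_2\) being identical). The crucial observation is that for \(y,z\in V_1\) one has \(c\notin C(y,z)\): concatenating a rainbow shortest \(y\)--\(x\) path with a rainbow shortest \(x\)--\(z\) path gives a \(y\)--\(z\) walk using color \(c\) zero times, so \cref{2.1.kiketu_rb2} yields \(c\notin C(y,z)\). Hence a rainbow shortest \(y\)--\(z\) path in \(G\) uses no color-\(c\) edge, so it lies entirely in \(G'\), and being a path issuing from \(y\in V_1\) it never leaves \(V_1\); thus the \(G_1\)-distance and the \(G\)-distance between \(y\) and \(z\) agree. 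Consequently a walk with endpoints in \(V_1\) is shortest in \(G_1\) exactly when it is shortest in \(G\), which by hypothesis is exactly when it is rainbow; so \(G_1\), and likewise \(G_2\), is a rainbow boomerang graph.

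I expect the one genuinely delicate point to be the last distance comparison — ruling out that a \(G\)-geodesic between two vertices of \(V_1\) shortcuts through color-\(c\) edges outside \(V_1\) — which is precisely what the parity description of \(C(\,\cdot\,,\,\cdot\,)\) in \cref{2.1.kiketu_rb2} handles. Everything else is bookkeeping with \cref{unipue_colors}, \cref{2.1.kiketu_rb2}, and the surjectivity of \(\phi\).
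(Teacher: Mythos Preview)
Your argument is correct and is essentially a fleshed-out version of the paper's one-line proof: both hinge on the parity description of \(C(\cdot,\cdot)\) from \cref{2.1.kiketu_rb2} to split the vertex set according to whether \(c\in C(x,\cdot)\), and to see that geodesics between vertices on the same side avoid color \(c\). The paper additionally cites \cref{g.RB1}, but your use of \cref{unipue_colors} together with \cref{2.1.kiketu_rb2} serves the same purpose, so there is no genuine difference in strategy.
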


\begin{proof}
This follows immediately from \cref{2.1.kiketu_rb2} and \cref{g.RB1}.
\end{proof}

\begin{definition}\label{d1}
    Let \( G \) be an edge-colored graph with color set \( C \). Let \( D \subseteq C \). We define an equivalence relation \( \sim_D \) on \( V \) as follows: For \( x, y \in V \), we say \( x \sim_D y \) if there exists a walk from \( x \) to \( y \) consisting only of edges with colors in \( D \). We denote the equivalence class of \( x \) by \( [x] \).

    We define the edge-colored graph \( G/D \) as follows:
    \begin{itemize}
        \item The vertex set is \( V/\sim_D \), the set of equivalence classes under \( \sim_D \);
        \item The color set is \( C \setminus D \);
        \item There is an edge of color \( c \in C \setminus D \) between \( [x] \) and \( [y] \) in \( G/D \) if and only if there exist \( u \in [x] \) and \( v \in [y] \) such that there is an edge of color \( c \) between \( u \) and \( v \) in \( G \).
    \end{itemize}
Given a walk \( W \) in \( G \):
\[
v_0 c_0 v_1 c_1 \dotsc c_{k-1} v_k,
\]
we define the \emph{induced walk} \( \overline{W} \) in \( G/D \) as:
\[
[v_0] [c_0] [v_1] [c_1] \dotsc [c_{k-1}] [v_k],
\]
where \( [c_i] = c_i \) if \( [v_i] \neq [v_{i+1}] \), and \( [c_i] \) represents an empty walk if \( [v_i] = [v_{i+1}] \).

\end{definition}

\begin{proposition}\label{3.1.quo}
 Let \( G \) be an rainbow boomerang graph with color set \( C \). Let \( D \subseteq C \).
    Then, the graph \( G/D \) is a rainbow boomerang graph.
\end{proposition}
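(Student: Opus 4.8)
The plan is to verify the two directions of Definition~\ref{exchange} for $G/D$: that every shortest walk is rainbow, and that every rainbow walk is shortest. The key technical device will be \cref{2.1.kiketu_rb2}, which tells us that $C(v_0,v_k)$ — the color set of a rainbow shortest path — equals the set of colors occurring an odd number of times along \emph{any} walk between those endpoints. First I would establish the crucial length comparison: given a walk $W$ in $G$ from $u$ to $v$, the induced walk $\overline W$ in $G/D$ has length equal to the number of edges $e_i$ of $W$ with $[v_i]\neq[v_{i+1}]$, i.e.\ the number of non-$D$-colored "visible" steps. I claim the distance in $G/D$ between $[u]$ and $[v]$ equals $\#\bigl(C(u',v')\setminus D\bigr)$ for any representatives $u'\in[u]$, $v'\in[v]$ — and more precisely that this quantity is independent of the choice of representatives. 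Indeed, if $u'\sim_D u''$ then a $D$-colored walk joins them, so by \cref{2.1.kiketu_rb2} the symmetric difference $C(u',v)\bigtriangleup C(u'',v)$ is contained in $D$; hence $C(u',v)\setminus D = C(u'',v)\setminus D$. Call this common set $C_D([u],[v])\subseteq C\setminus D$.

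Next I would show two things simultaneously by an argument parallel to \cref{2.1.kiketu_rb2}: (i) for \emph{any} walk $\overline W$ in $G/D$ from $[u]$ to $[v]$, the set of colors appearing an odd number of times in $\overline W$ is exactly $C_D([u],[v])$; and (ii) consequently the length of $\overline W$ is at least $\#C_D([u],[v])$, with equality iff $\overline W$ is rainbow. For (i): lift $\overline W$ to a walk $W$ in $G$ by choosing, for each "invisible" step $[c_i]$ (where $[v_i]=[v_{i+1}]$), an actual $D$-colored walk in $G$ between the chosen representatives; the visible colors of $W$ and $\overline W$ agree with the same multiplicities, and all the inserted colors lie in $D$. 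By \cref{2.1.kiketu_rb2} applied to $W$ in $G$, the colors occurring oddly often in $W$ form $C(u,v)$; intersecting with $C\setminus D$ kills all the inserted colors and leaves exactly the colors occurring oddly often among the visible steps, which is $C_D([u],[v])$. This proves (i). Then (ii) is immediate: a walk of length $\ell$ with $r$ colors occurring oddly often has $\ell\geq r$ since oddly-occurring colors each contribute at least one edge, and $\ell=r$ forces every color to occur exactly once, i.e.\ rainbow — and conversely a rainbow walk has all multiplicities $1$, so its length equals the number of its colors, all of which occur oddly, giving length $=\#C_D([u],[v])$, the minimum.

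Finally I assemble the equivalence. If $\overline W$ is shortest in $G/D$, then its length is the minimum $\#C_D([u],[v])$, so by (ii) it must be rainbow. Conversely if $\overline W$ is rainbow, then by (i) its colors all occur oddly, there are no others, so its length equals $\#C_D([u],[v])$, which by (ii) is the minimal possible length; hence $\overline W$ is shortest. One should also check the hypotheses carried by the statement: $G/D$ is properly colored (if two edges of color $c\notin D$ at $[x]$ came from edges at representatives $u,u'$ of $[x]$, then $u\sim_D u'$ and the two $c$-edges plus a $D$-walk would give a short closed walk in $G$ forcing the other endpoints $D$-equivalent, so the two edges of $G/D$ coincide) and connected when $G$ is. The main obstacle I anticipate is (i) — making the lifting argument fully rigorous, in particular handling the bookkeeping of representatives so that the inserted $D$-walks can be concatenated into a genuine walk $W$ in $G$ whose visible-color multiset matches that of $\overline W$ exactly; once that lift is in place, everything else reduces cleanly to \cref{2.1.kiketu_rb2} and elementary parity counting.
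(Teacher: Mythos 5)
Your proposal is correct, and it fills in a great deal of detail that the paper's one-line proof compresses away. Both your argument and the paper's rely on the parity characterization of \cref{2.1.kiketu_rb2}: the paper simply observes that the color multiset of an induced walk $\overline{W}$ is the color multiset of $W$ restricted to $C\setminus D$ and then waves at \cref{g.RB1}, whereas you make the lifting of an arbitrary walk in $G/D$ back to $G$ explicit, prove that $C_D([u],[v])$ is well-defined, establish the parity law (i) and the length bound (ii) directly, and verify that $G/D$ is properly colored. Two small points. First, your phrase ``invisible step $[c_i]$ (where $[v_i]=[v_{i+1}]$)'' is slightly off: a walk in $G/D$ has no such steps, since $G/D$ is loopless; what you actually need (and clearly intend) is to insert a $D$-walk between the representative endpoint $b_i\in[v_{i+1}]$ chosen for the $i$-th lifted edge and the representative $a_{i+1}\in[v_{i+1}]$ chosen for the $(i+1)$-st. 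Second, you assert that the distance in $G/D$ equals $\#C_D([u],[v])$ but only argue the lower bound via (ii); the matching upper bound needs the remark that a shortest (hence rainbow) walk in $G$ from $u$ to $v$ induces a rainbow walk in $G/D$ of length $\#\bigl(C(u,v)\setminus D\bigr)=\#C_D([u],[v])$. With those two clarifications your proof is complete and self-contained.
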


\begin{proof}
Noting that the colors appearing in \( \bar{W} \) are exactly those among the colors appearing in \( W \) that belong to \( C \setminus D \), it follows from \cref{g.RB1}.
\end{proof}

% 新しい内容の開始

\begin{example}\label{tree}

    A connected edge-colored tree is a rainbow boomerang graph if and only if all edges have distinct colors.
More generally, in a rainbow boomerang graph, the color of any \emph{bridge} (i.e., an edge whose removal disconnects the graph) is distinct from the colors of all other edges.
\end{example}

\begin{example}[Cycle Graph \( C_n \)] \label{g.cn}
A \emph{cycle graph} \( C_n \) is a graph defined as follows:
\begin{itemize}
    \item \textbf{Vertex set:} \( \{ v_1, v_2, \dots, v_n \} \);
    \item \textbf{Edges:} An edge \( e_i \) connects \( v_i \) and \( v_{i+1} \) for \( 1 \leq i \leq n-1 \), and an edge \( e_n \) connects \( v_n \) and \( v_1 \).
\end{itemize}

Now, consider an edge-colored graph obtained by coloring each edge \( e_i \) of \( C_n \) with a color \( c_i \). This edge-colored graph becomes a \emph{rainbow boomerang graph} if and only if the following conditions hold:
\begin{itemize}
    \item \( n = 2m \) for some integer \( m \neq 1\);
    \item \( c_1, \dots, c_m \) are pairwise distinct;
    \item \( c_1 = c_{m+1}, c_2 = c_{m+2}, \dots, c_m = c_{2m} \).
\end{itemize}

This characterization is a consequence of proof of \cref{g.rb2.kai}.
\end{example}

The following example constitutes important background in this work.

\begin{example}\label{g.cox}
Let \( W \) be a (not necessarily crystallographic) finite Coxeter group with \( S \) as the set of simple reflections. The Cayley graph \( \mathrm{Cay}(W, S) \) has vertices corresponding to the bases of the root system, and its edges are colored by the reflecting hyperplanes. When colored in this way, \( \mathrm{Cay}(W, S) \) becomes a rainbow boomerang graph due to the well-known fact \cref{g.lwnw}.
\end{example}

 \begin{example}
 The (finite) Young lattice (and its type~D variant), as well as suitable quotients thereof, naturally form examples of rainbow boomerang graphs. These examples are described in detail in \cite{Hirota2025}.

In this work, we do not consider the lattice structure; however, this lattice structure has been actively studied in recent years. For example, see \cite{coggins2024visual,stanley2012topics}. Our notion of quotient is consistent with this lattice structure in a certain sense.
\end{example}

\begin{remark}\label{111110}
Rainbow connection is a concept in graph theory that has been actively studied in recent years, with researchers exploring its theoretical properties and practical applications in areas such as secure communication and network design \cite{li2013rainbow}.
Specifically, the following concepts are commonly studied:

\begin{itemize}
    \item An edge-colored graph \( G \) is \emph{(strongly) rainbow connected} if any two vertices are connected by a rainbow path (which is also shortest).
\end{itemize}

Furthermore, the following concept has been studied in \cite{chandran2018algorithms}, and to the best of the author's knowledge, it is the closest to the class of graphs considered in this work.

\begin{itemize}
    \item An edge-colored graph \( G \) is said to be \emph{very strongly rainbow connected} if every shortest path in \( G \) is always a \emph{rainbow path}.
\end{itemize}

The \emph{rainbow boomerang graph}, as considered in this work, is defined as a class of graphs where the converse also holds, imposing much stricter constraints. In fact, for a given graph \( G \), while there always exists a trivial \emph{very strongly rainbow coloring} (where every edge is assigned a distinct color), a coloring that makes \( G \) a rainbow boomerang graph may not exist. This is due to the strong restrictions on the coloring of closed walks, as observed in \cref{g.cn}. This is illustrated in the following examples.
\end{remark}

% 新しい内容の終了
\begin{example}
    The following bipartite graphs do not admit a rainbow boomerang coloring:

    \begin{center}
    \begin{tikzpicture}[scale=1.05]
        % First graph
        \begin{scope}[xshift=0cm, scale=0.7]
            % Nodes
            \node[draw, fill=black, circle, inner sep=2pt] (A) at (-1, -1) {};
            \node[draw, fill=black, circle, inner sep=2pt] (B) at (1, -1) {};
            \node[draw, fill=black, circle, inner sep=2pt] (C) at (1, 1) {};
            \node[draw, fill=black, circle, inner sep=2pt] (D) at (-1, 1) {};
            \node[draw, fill=black, circle, inner sep=2pt] (E) at (0, 0) {};

            % Edges
            \draw (A) -- (B);
            \draw (B) -- (C);
            \draw (C) -- (D);
            \draw (D) -- (A);
            \draw (E) -- (A);
            \draw (E) -- (C);
        \end{scope}

        % Second graph
        \begin{scope}[xshift=4cm, scale=0.7]
            % Nodes
            \foreach \i in {1,...,6} {
                \node[circle, draw, fill=black, inner sep=2pt] (\i) at ({60 * (\i - 1)}:2cm) {};
            }

            % Edges
            \foreach \i in {1,...,5} {
                \pgfmathtruncatemacro{\j}{\i + 1} % 次のノードを計算
                \draw (\i) -- (\j);
            }
            \draw (6) -- (1); % 最後と最初を接続

            % Additional edges
            \draw (1) -- (4); % 1と4を結ぶ
            \draw (3) -- (6); % 3と6を結ぶ
        \end{scope}
    \end{tikzpicture}
    \end{center}
\end{example}

\begin{example}
    A complete bipartite graph \( K_{m,n} \) (\( m \leq n \)) admits a rainbow boomerang coloring if and only if \( m = 1 \) or \( m = n = 2 \).
\end{example}

\begin{example}\label{g.qn}
The hypercube graph \( Q_n \) is defined as follows:
\begin{itemize}
    \item Vertex set: \( (\mathbb{Z}/2\mathbb{Z})^{\oplus n} \);
    \item Edge set: An edge exists between two vertices if they differ in exactly one coordinate.
\end{itemize}
 The graph \( Q_n \) admits a natural proper edge coloring with the color set of size \( n \), and according to this cloring, \( Q_n \) is a rainbow boomerang graph. 

Let \( G \) be a connected rainbow boomerang graph with a color set \( C \) of size \( n \). Fix a vertex \( x \) in \( G \). By \cref{g.RB1}, the vertice \(y\) of \( G \) can be uniquely characterized by the set \(C(x,y)\).

In this way, the vertex set of \( G \) can be viewed as a subset of \( (\mathbb{Z}/2\mathbb{Z})^{\oplus n} \), and through this identification, the following clearly holds:

\end{example}

\begin{proposition}
    A connected rainbow boomerang graph with a color set of size \( n \) can be embedded into the hypercube graph \( Q_n \) as edge colored graphs.
\end{proposition}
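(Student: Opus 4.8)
The plan is to make explicit the map sketched in \cref{g.qn} and verify it is an embedding of edge-colored graphs. Fix a base vertex $x \in V(G)$ and identify the color set $C$ with $\{1,\dots,n\}$, so that subsets of $C$ are identified with $(\mathbb{Z}/2\mathbb{Z})^{\oplus n}$ via indicator vectors. Define $\iota \colon V(G) \to V(Q_n)$ by $\iota(y) = \mathbf{1}_{C(x,y)}$. This is well-defined because $C(x,y)$ is well-defined by \cref{unipue_colors} (connectedness of $G$ guarantees that a rainbow shortest path from $x$ to $y$ exists for every $y$), and it is injective by \cref{g.RB1}.

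The key step is to show that $\iota$ carries an edge of color $c$ to an edge of $Q_n$ ``of the same color'', i.e.\ to a pair of vertices differing in exactly the $c$-th coordinate. So suppose $\{y,z\}$ is an edge of $G$ of color $c$, and pick a rainbow shortest path $W$ from $x$ to $y$; appending the edge $\{y,z\}$ produces a walk from $x$ to $z$. Along $W$ each color of $C(x,y)$ occurs exactly once and no other color occurs, so along the extended walk the color $c$ occurs twice if $c \in C(x,y)$ and once otherwise, while every other color of $C(x,y)$ still occurs exactly once. By \cref{2.1.kiketu_rb2}, $C(x,z)$ is precisely the set of colors occurring an odd number of times, so $C(x,z) = C(x,y) \setminus \{c\}$ or $C(x,z) = C(x,y) \cup \{c\}$ according to whether $c \in C(x,y)$. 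In either case the symmetric difference of $C(x,y)$ and $C(x,z)$ equals $\{c\}$, so $\iota(y)$ and $\iota(z)$ differ exactly in coordinate $c$; this is exactly what it means for $\{\iota(y),\iota(z)\}$ to be an edge of $Q_n$ carrying the natural color $c$.

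Combining the two points, $\iota$ is injective on vertices, sends each edge of $G$ to an edge of $Q_n$, and preserves edge colors, hence is an embedding of edge-colored graphs. I do not anticipate a genuine obstacle here: the only delicate point is the parity bookkeeping when extending a rainbow shortest path by a single edge, and this is precisely what \cref{2.1.kiketu_rb2} controls, while \cref{unipue_colors} and \cref{g.RB1} supply the well-definedness and injectivity of $\iota$.
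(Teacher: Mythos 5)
Your proof is correct and follows exactly the approach the paper sketches in the example preceding the proposition: fix a base vertex $x$, send $y$ to the indicator vector of $C(x,y)$, invoke \cref{unipue_colors} and \cref{g.RB1} for well-definedness and injectivity, and use \cref{2.1.kiketu_rb2} for the symmetric-difference computation that shows edges map to edges of the same color. You have simply spelled out the details the paper leaves implicit.
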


The following theorem, whose possibility was suggested by Takuya Saito, characterizes the concept of a rainbow boomerang graph in terms of its underlying graph.

\begin{theorem}\label{ko}
A connected sub-edge-colored graph \( G \) in the hypercube \( Q_n \) is a rainbow boomerang graph if and only if \( G \cap Q_{n'} \) is connected for any subhypercube \( Q_{n'} \) in \( Q_n \).
\end{theorem}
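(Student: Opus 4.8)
The plan is to prove both directions by exploiting the characterization of vertices of a connected rainbow boomerang graph $G \subseteq Q_n$ via their color sets $C(x,y)$, together with the earlier structural results, especially \cref{2.1.kiketu_rb2}, \cref{g.RB1}, and \cref{wwwww}. Throughout I fix a base vertex $x$ and identify each vertex $y$ of $G$ with the subset $C(x,y) \subseteq C$, so that $V(G)$ becomes a subset of $(\mathbb{Z}/2\mathbb{Z})^{\oplus n}$; under this identification an edge of color $c$ joins two subsets that differ precisely in the element $c$, which is exactly the edge relation of $Q_n$ restricted to $V(G)$, and the color of the edge is determined by the underlying edge of $Q_n$. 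Hence $G$ is genuinely a sub-edge-colored graph of $Q_n$, and a subhypercube $Q_{n'}$ of $Q_n$ corresponds to fixing the membership of the colors outside some subset $C' \subseteq C$ with $|C'| = n'$ and letting the colors in $C'$ vary freely.

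For the forward direction, assume $G$ is a rainbow boomerang graph, and let $Q_{n'}$ be a subhypercube, determined by a subset $C' \subseteq C$ and a fixed pattern on $C \setminus C'$. The intersection $G \cap Q_{n'}$ consists of those vertices $y$ of $G$ whose color set $C(x,y)$ agrees with the fixed pattern outside $C'$. The key observation is that, by \cref{wwwww} applied iteratively — removing the edges of each color in $C \setminus C'$ one at a time — $G$ decomposes into connected rainbow boomerang graphs, and the vertices sharing a fixed pattern on $C \setminus C'$ form exactly one of these pieces (here \cref{g.RB1} guarantees that two vertices lie in the same piece iff their color sets agree outside $C'$, since the walk connecting them within the piece uses only colors in $C'$). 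Therefore $G \cap Q_{n'}$ is connected; one should be a little careful that $G \cap Q_{n'}$ might be empty, which counts as connected by convention, so the statement holds vacuously in that case.

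For the converse, assume $G \cap Q_{n'}$ is connected for every subhypercube. I want to show a walk in $G$ is shortest iff it is rainbow. One direction is immediate: if a walk from $y$ to $z$ is rainbow, its length equals $|C(y,z)|$ by \cref{2.1.kiketu_rb2} — wait, that lemma is already proved for rainbow boomerang graphs, so I cannot invoke it; instead I argue directly. If a walk $W$ from $y$ to $z$ repeats a color $c$, consider the two endpoints of $W$ as subsets of $C$: since the edge relation of $Q_n$ flips one coordinate, the symmetric difference $C(x,y) \triangle C(x,z)$ is the set of colors appearing an odd number of times in $W$, so a repeated color $c$ appearing an even number of times contributes a "wasted" pair, and the graph distance in $Q_n$ between $y$ and $z$ is $|C(x,y) \triangle C(x,z)| < \operatorname{length}(W)$. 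It then suffices to show that the $G$-distance equals the $Q_n$-distance, i.e. that $G$ is an isometric (convex) subgraph of $Q_n$; this is where the subhypercube connectivity hypothesis enters. Given $y, z \in V(G)$ with $C(x,y) \triangle C(x,z) = C'$, apply the hypothesis to the smallest subhypercube $Q_{n'}$ containing both $y$ and $z$ (the one spanned by the coordinates in $C'$ with the common pattern fixed outside): since $G \cap Q_{n'}$ is connected, there is a walk from $y$ to $z$ inside it, and any such walk only uses colors in $C'$. A shortest such walk has length $|C'|$: it cannot be shorter since each step changes one coordinate and $|C'|$ coordinates must change, and a shortest walk in a bipartite-type setting using only colors in $C'$ that connects two vertices differing in all of $C'$ must use each color an odd number of times, hence (by minimality) exactly once. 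This produces a rainbow shortest walk, and conversely shows every shortest walk is rainbow, establishing that $G$ is a rainbow boomerang graph.

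The main obstacle I anticipate is the converse, specifically the claim that within $G \cap Q_{n'}$ a shortest walk between two antipodal vertices of the subhypercube is rainbow of length exactly $n'$. Connectivity of $G \cap Q_{n'}$ gives some walk using only the $n'$ relevant colors, but one must argue it can be taken rainbow; the cleanest route is an induction on $n'$: pick any color $c \in C'$, note that within $G \cap Q_{n'}$ the vertices split by their $c$-coordinate into two sets, each of which is $G$ intersected with a subhypercube $Q_{n'-1}$ and hence connected by hypothesis, choose a $c$-edge crossing between them (it exists because a connecting walk must cross), and splice rainbow walks obtained inductively in the two halves — taking care that the inductive walks avoid color $c$ (automatic, since they live in the $c$-free halves) and that the two endpoint patterns match up. Handling the bookkeeping of which subhypercube to use and verifying that the spliced walk is genuinely shortest (not merely rainbow) is the delicate part, but it is forced once one knows $G$ is isometrically embedded, so the crux is really establishing that isometry from the connectivity hypothesis.
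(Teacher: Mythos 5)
Your proposal is correct, and for the converse direction it takes a genuinely different route from the paper's. For the easy direction (rainbow implies shortest) and the forward implication (rainbow boomerang forces all subhypercube intersections to be connected), you and the paper use essentially the same idea — a rainbow shortest path between two vertices of $G\cap Q_{n'}$ uses only colors in the free coordinates of $Q_{n'}$ and therefore never leaves it — though you package it through the structural decomposition of \cref{wwwww} and the walk-parity lemma \cref{2.1.kiketu_rb2}, whereas the paper argues directly; your citation of \cref{g.RB1} for the claim ``same piece iff agree outside $C'$'' is slightly off, since the relevant fact is really \cref{2.1.kiketu_rb2}, but the reasoning is sound.

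The real divergence is in the hard direction (subhypercube connectivity implies rainbow boomerang). The paper proceeds by contrapositive: from a non-rainbow shortest path it extracts, by minimality, a walk of the form $x\,c_0\,v_1\,c_1\dots v_l\,c_l\,v_{l+1}\,c_0\,y$ with $c_0,\dots,c_l$ distinct, and then does downward induction on $l$ to locate a subhypercube $Q_{n'}$ with $G\cap Q_{n'}$ disconnected. You instead prove directly that $G$ is isometrically embedded in $Q_n$, by upward induction on the dimension of the smallest subhypercube $Q_{n'}$ containing a pair $y,z$: pick a free color $c$, split $Q_{n'}$ into two halves $Q_{n'-1}$, use connectivity of $G\cap Q_{n'}$ to find a $c$-crossing edge $uv$, apply induction in each half to get rainbow walks $y\rightsquigarrow u$ and $v\rightsquigarrow z$, and splice. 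Your worry about the ``bookkeeping'' is in fact unfounded: writing $A$ and $B$ for the $Q_n$-differences $y\triangle u$ and $v\triangle z$, the identity $A\triangle\{c\}\triangle B = C'$ forces $A\cap B=\emptyset$ and $A\cup B = C'\setminus\{c\}$, so the color sets of the three segments are automatically disjoint and the spliced walk is rainbow of length $n'$; isometry follows, and with it the equivalence of shortest and rainbow. So both approaches work and both do induction over subhypercube structure, but the paper's argument ``shrinks a bad walk'' while yours ``grows a good walk.'' Your route has the advantage of exhibiting the rainbow shortest walk explicitly and of making the isometric-embedding content of the theorem visible, at the cost of being somewhat longer to write out rigorously.
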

\begin{proof}
For a walk \( W \) on \( G \subseteq Q_n \), if \( W \) is rainbow, then it is clearly shortest.

Suppose that \( G \subseteq Q_n \) is a rainbow boomerang graph, and take \( x, y \in G \cap Q_{n'} \).  
    There exists a rainbow path between \( x \) and \( y \) in \( G \subseteq Q_n \), but since all rainbow paths between \( x \) and \( y \) in \( Q_n \) can be realized within \( Q_{n'} \), it follows that \( G \cap Q_{n'} \) is connected.

    Conversely, suppose that \( G \subseteq Q_n \) is not a rainbow boomerang graph.  
    Then, there exists a shortest path \( W \) between some \( x, y \) that is not rainbow in \( G \).  
    Since any subwalk of a shortest path is also a shortest path, we can write  
    \[
    W = x c_0 v_1 c_1 \dots v_l c_l v_{l+1} c_0 y,
    \]
    where \( c_0, c_1, \dots, c_l \) are distinct.  
    We proceed by induction on \( l \) to show that there exists a subhypercube \( Q_{n'} \) such that \( G \cap Q_{n'} \) is disconnected.

Base Case (\( l = 1 \)):  
    Consider the subhypercube \( Q_1 \) determined by the tuple \( (x, y, c_1) \).  
    Since the edge \( x c_1 y \) is not contained in \( G \), it follows that \( G \cap Q_1 \) is disconnected.
    
    Inductive Step (\( l > 1 \)):  
    In \( Q_n \), there exists a rainbow path  
    \(
    x c_1 w_2 c_2 \dots w_l c_l y.
    \)
    Consider the subhypercube \( Q_l \) determined by the tuple \( (x, y, c_1, \dots, c_l) \).  
    Any shortest path between \( x \) and \( y \) in \( G \cap Q_l \) has a length of at least \( l + 1 \), and it must not be rainbow.  
    Setting \( n = l \) and appropriately choosing a subwalk, we can apply induction to complete the proof.

\end{proof}

\section{Weyl groupoids} \label{app:Weyl_groupoids}
% Section: Weyl Groupoids
\subsection{Basics and examples}

See \cite[Section 9,10]{heckenberger2020hopf} for basic material about Weyl groupoids.

% semi Cartan graph definition
\begin{definition}\cite{heckenberger2020hopf}
    An edge-colored graph \( G \) with vertex set \( V \) is called a \emph{semi Cartan graph} (also known as a \emph{Cartan scheme}) if it is equipped with:
    \begin{itemize}
        \item a non-empty finite set \( I \) of colors,
        \item and a label set \( \{ A^{x} \}_{x \in V} \), where each \( A^{x} \) is a generalized Cartan matrix of size \( \#I \times \#I \) (in the sense of \cite{kac1990infinite}),
    \end{itemize}
    satisfying the following conditions:
    \begin{enumerate}[label=\textbf{(CG\arabic*)}, leftmargin=1.5cm]
        \item \( G \) is properly colored (i.e., edges emanating from the same vertex have distinct colors) and \( \#I \)-regular (i.e., each vertex is incident to exactly \( \#I \) edges).
        \item If two vertices \( x \) and \( y \) are connected by an edge of color \( i \), then the \( i \)-th row of \( A^{x} \) equals the \( i \)-th row of \( A^{y} \).
    \end{enumerate}
    The underlying edge-colored graph of a semi Cartan graph \( G \) is called the \textit{exchange graph} and is denoted by \( E(G) \). When illustrating  \( G \), we omit loops for simplicity, thanks to \textbf{(CG1)}.
    
    The size of \( I \) is called the \emph{rank} of \( G \).
\end{definition}

% Definition of \( r_i x \)
For \( x \in V \), define \( r_i x \in V \) as the vertex connected to \( x \) by an edge of color \( i \).  Then, \( r_i \) is an involution on \( V \).

% Basis and \( s_i^x \)
For each \( x \in V \), consider a copy \( (\mathbb{Z}^{I})^{x} \) of \( \mathbb{Z}^{I} \) associated with \( x \). The standard basis of \( (\mathbb{Z}^{I})^{x} \) is denoted by \( \{ \alpha_i^{x} \}_{i \in I} \).

The standard basis of \( \mathbb{Z}^{I} \) is also denoted by \( \{ \alpha_i \}_{i \in I} \). We define a standard isomorphism \( \varphi^{x}: \mathbb{Z}^{I} \to \mathbb{Z}^{I} \) for each \( x \), which maps \( \alpha_i^{x} \) to \( \alpha_i \) for \( i \in I \).

For each \( i \in I \) and \( x \in V \), define \( s_i^x \in \operatorname{Hom}_{\mathbb{Z}}((\mathbb{Z}^{I})^x, (\mathbb{Z}^{I})^{r_i x}) \) by the mapping:
\[
    \alpha_j^x \mapsto \alpha_j^{r_i x} - a_{ij}^x \alpha_i^{r_i x}, \quad \text{for } j \in I.
\]

When the context is clear, the subscript \( x \) in \( s_i^x \) may be omitted. Additionally, it is sometimes expressed as a composition with the identity map \( \operatorname{id}_x \) at a vertex \( x \) to emphasize the starting or ending points of the mapping.

\begin{remark}
Our \textbf{(CG1)} is equivalent to \textbf{(CG1)} in \cite{heckenberger2020hopf}.
\end{remark}

% Definition of the Weyl groupoid
\begin{definition}[Semi Weyl Groupoid]
The \emph{semi Weyl groupoid} \( W(G) \) of \( G \) is the category with objects \( V \), where the morphisms from \( x \) to \( r_{i_t} \cdots r_{i_1} x \) are elements of \( \operatorname{Hom}_{\mathbb{Z}}((\mathbb{Z}^{I})^{x}, (\mathbb{Z}^{I})^{r_{i_t} \cdots r_{i_1} x}) \) of the form 
\[
s_{i_t}^{r_{i_{t-1}} \cdots r_{i_1} x} \cdots s_{i_2}^{r_{i_1} x} s_{i_1}^{x}.
\]
We denote the set of such morphisms as \( \operatorname{Hom}_{W(G)}(x, r_{i_t} \cdots r_{i_1} x) \). The composition of morphisms is defined by the natural composition of these maps.
\end{definition}

By the above construction, the semi Weyl groupoid indeed becomes a groupoid due to \textbf{(CG2)}. For a general connected groupoid \( W \), note that the group structure of \( \operatorname{Aut}_W(x) = \operatorname{Hom}_W(x, x) \) does not depend on the choice of \( x \). An element of \( \operatorname{Hom}_{W(G)}(x, y) \) can be regarded as an element of \( \operatorname{Aut}_{\mathbb{Z}}(\mathbb{Z}^{I}) \) via \( \varphi^x \) and \( \varphi^y \).

\begin{definition}[Real Roots] \label{R^q}\cite{heckenberger2020hopf}
For each \( x \in V \), define the set of \emph{real roots} \( R^{x} \) as subsets of \( (\mathbb{Z}^{I})^{x} \) of the form:
\[
R^{x} := \{ w \alpha_i^{y} \mid w \in \operatorname{Hom}_{W(G)}(y, x), \, y \in V, \, i \in I \}.
\]
Let the set of \emph{positive real roots} be defined as:
\[
R^{x+} := \bigl(R^{x} \cap \bigl(\sum_{i \in I} \mathbb{Z}_{\geq 0} \alpha_i^{x} \bigr)\bigr).
\]
\end{definition}

A semi Cartan graph is said to be \emph{finite} when \( \#R^{x} < \infty \).

% Definition of Cartan Graph
\begin{definition}\cite{heckenberger2020hopf}
    A semi Cartan graph \( G \) is called a \emph{Cartan graph} if it satisfies the following conditions:
    \begin{enumerate}[label=\textbf{(CG\arabic*)}, align=left, leftmargin=*]
        \item[\textbf{(CG3)}] For all \( x \in V \), \( R^{x} = R^{x+} \cup (-R^{x+}) \).
        \item[\textbf{(CG4)}] If \( w \in \operatorname{Hom}_W(G)(x, y) \) and \( w \alpha_i^{y} \in R^{x+} \) for all \( i \in I \), then \( w = \operatorname{id}_{x} \). In particular, we have \( x = y \).
    \end{enumerate}

    A semi Weyl groupoid arising from a (finite) Cartan graph is called a \emph{(finite) Weyl groupoid}.
\end{definition}

\begin{remark}
    Our \textbf{(CG4)} is equivalent to \textbf{(CG4)} of \cite[Remark 1.6]{heckenberger2020hopf} by \cite{azam2015classification} , [\cite{heckenberger2020hopf}, Corollary 9.3.8] and \cref{lem:root_bij}.
\end{remark}

\begin{remark}
In existing literature, such as \cite{heckenberger2020hopf}, groupoids arising from semi Cartan graphs are also referred to as Weyl groupoids. On the other hand, there is a convention of using the term Weyl groupoid where generalized root system would be more appropriate. Indeed, as in the case of classical BC types, groupoids associated with distinct Cartan graphs can be isomorphic. While adhering to this convention, we distinguish groupoids associated with semi Cartan graphs, which are not Cartan graphs, by calling them semi Weyl groupoids to avoid confusion.
\end{remark}

% Lemma
\begin{lemma}[\cite{heckenberger2020hopf}, Lemma 9.1.19]\label{lem:root_bij}
    Let \( G \) be a semi Cartan graph satisfying (CG3). Then \( s_i^{x} \) provides a bijection between the sets 
    \[
    \big(R^{x} \setminus \{ -\alpha_i^{x} \}\big) \quad \text{and} \quad \big(R^{r_i x} \setminus \{ -\alpha_i^{r_i x} \}\big).
    \]
\end{lemma}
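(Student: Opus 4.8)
The plan is to derive the lemma from two structural observations: that $s_i^{x}$ is a $\mathbb{Z}$-linear isomorphism with inverse $s_i^{r_i x}$, and that $s_i^{x}$ is itself a morphism of the semi Weyl groupoid $W(G)$; the hypothesis \textbf{(CG3)} is then needed only to pin down which simple (anti)roots are deleted from each side.

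First I would record the isomorphism. Since $x$ and $r_i x$ are joined by an edge of color $i$, axiom \textbf{(CG2)} gives $a_{ij}^{x}=a_{ij}^{r_i x}$ for all $j\in I$; in particular $a_{ii}^{x}=2$, so the defining formula yields $s_i^{x}(\alpha_i^{x})=-\alpha_i^{r_i x}$. Substituting the formula into itself and using $a_{ij}^{x}=a_{ij}^{r_i x}$ gives $s_i^{r_i x}\circ s_i^{x}=\operatorname{id}_{(\mathbb{Z}^{I})^{x}}$ and $s_i^{x}\circ s_i^{r_i x}=\operatorname{id}_{(\mathbb{Z}^{I})^{r_i x}}$; hence $s_i^{x}$ is bijective with inverse $s_i^{r_i x}$, and symmetrically $s_i^{r_i x}(\alpha_i^{r_i x})=-\alpha_i^{x}$. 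Next, by the definition of $W(G)$ we have $s_i^{x}\in\operatorname{Hom}_{W(G)}(x,r_i x)$, so post-composition with $s_i^{x}$ maps $\operatorname{Hom}_{W(G)}(y,x)$ into $\operatorname{Hom}_{W(G)}(y,r_i x)$ for every $y\in V$; thus for a real root $\beta=w\alpha_j^{y}\in R^{x}$ we get $s_i^{x}(\beta)=(s_i^{x}w)\alpha_j^{y}\in R^{r_i x}$, i.e.\ $s_i^{x}(R^{x})\subseteq R^{r_i x}$. The same argument applied to $s_i^{r_i x}$ gives the reverse inclusion, and combined with the isomorphism above this shows $s_i^{x}$ restricts to a bijection $R^{x}\to R^{r_i x}$ interchanging $\alpha_i^{x}\leftrightarrow-\alpha_i^{r_i x}$ and $-\alpha_i^{x}\leftrightarrow\alpha_i^{r_i x}$. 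Deleting the appropriate one-element orbit from each side produces the asserted bijection.

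Finally, to identify the endpoints precisely I would invoke \textbf{(CG3)}. For $j\neq i$ the Cartan entry $-a_{ij}^{x}\geq 0$, so $s_i^{x}(\alpha_j^{x})=\alpha_j^{r_i x}-a_{ij}^{x}\alpha_i^{r_i x}$ has non-negative coordinates; hence if $\beta\in R^{x+}$ has support not contained in $\{i\}$, then $s_i^{x}(\beta)$ has a strictly positive coordinate on some $\alpha_j^{r_i x}$ with $j\neq i$, and the decomposition $R^{r_i x}=R^{r_i x+}\sqcup(-R^{r_i x+})$ of \textbf{(CG3)} forces $s_i^{x}(\beta)\in R^{r_i x+}$. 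Running the same argument for $s_i^{r_i x}$ shows that $s_i^{x}$ restricts to a bijection $R^{x+}\setminus\{\alpha_i^{x}\}\to R^{r_i x+}\setminus\{\alpha_i^{r_i x}\}$; splitting the domain of the lemma along \textbf{(CG3)} into its positive and negative parts and reassembling the images then finishes the proof.

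The one place that is not purely formal is exactly this last sign-and-support analysis — in particular the fact that $R^{x}\cap\mathbb{Z}\alpha_i^{x}=\{\alpha_i^{x},-\alpha_i^{x}\}$, so that ``$\beta\neq\pm\alpha_i^{x}$'' coincides with ``$\beta$ has support outside $\{i\}$'' — and this is the step where \textbf{(CG3)} is indispensable; everything before it is automatic from the groupoid structure via Steps one and two.
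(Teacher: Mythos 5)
The paper does not give its own proof of this lemma; it cites it verbatim from Heckenberger--Schneider, so I evaluate your argument on its own terms. It has a concrete gap at the step that is supposed to conclude. You correctly show that $s_i^x$ is a $\mathbb{Z}$-linear isomorphism, that it restricts to a bijection $R^x \to R^{r_i x}$, and that it sends $\alpha_i^x \mapsto -\alpha_i^{r_i x}$ and $-\alpha_i^x \mapsto \alpha_i^{r_i x}$. But then "deleting the appropriate one-element orbit from each side" does \emph{not} produce the asserted bijection: removing $-\alpha_i^x$ from the domain removes its image $\alpha_i^{r_i x}$ from the codomain, not $-\alpha_i^{r_i x}$, so the restriction of $s_i^x$ to $R^x \setminus \{-\alpha_i^x\}$ is a bijection onto $R^{r_i x} \setminus \{\alpha_i^{r_i x}\}$. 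The two deleted elements $-\alpha_i^x$ and $-\alpha_i^{r_i x}$ simply do not correspond under $s_i^x$. Your final paragraph correctly proves the genuine exchange statement $s_i^x\colon R^{x+}\setminus\{\alpha_i^x\}\to R^{r_i x+}\setminus\{\alpha_i^{r_i x}\}$, but reassembling with negatives again lands in $R^{r_i x}\setminus\{\alpha_i^{r_i x}\}$. So your argument, read as a proof of what is literally written, does not close; this strongly suggests the displayed statement in the paper has a sign typo relative to the cited source, and you should flag that rather than assert the stated conclusion.

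There is a second, independent gap: the fact $R^x \cap \mathbb{Z}\alpha_i^x = \{\pm\alpha_i^x\}$, which you yourself identify as "the one place that is not purely formal," is asserted but never derived. \textbf{(CG3)} only says $R^x = R^{x+}\cup(-R^{x+})$; it does nothing to forbid, say, $2\alpha_i^x\in R^{x+}$, and if such an element existed then $s_i^x(2\alpha_i^x)=-2\alpha_i^{r_i x}$ would be a positive root mapped to a negative one, breaking your support dichotomy. In Heckenberger--Schneider this indecomposability of $\mathbb{Z}\alpha_i$ is part of the axioms of a root system of type $\mathcal{C}$, not a consequence of \textbf{(CG3)} alone, so it needs either to be added as a hypothesis or proved from the groupoid structure before your last paragraph is valid.
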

The following is a generalization of  \cref{g.lwnw}

\begin{theorem}[\cite{heckenberger2020hopf} Theorem 9.3.5]\label{poid_lwnw}
Let \(G\) be a Cartan graph and \( w \in \operatorname{Hom}_{W(G)}(x, y) \). Define 
\[
l(w) := \min \{ n \mid \operatorname{id}_x s_{i_n} \dots s_{i_1} = w \}
\]
and
\[
N(w) := \# \{ \alpha \in R^{y+} \mid w \alpha \in -R^{x+} \}.
\]
Then, \( l(w) = N(w) \).
\end{theorem}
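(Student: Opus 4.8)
The plan is to transplant the classical proof of \cref{g.lwnw} (Humphreys, Corollary~1.7) into the groupoid setting, using only \cref{lem:root_bij} together with the axioms \textbf{(CG3)} and \textbf{(CG4)} as structural input. Fix $w\colon y\to x$ in $W(G)$, so that $N(w)=\#\{\alpha\in R^{y+}\mid w\alpha\in -R^{x+}\}$ in the notation of the statement. Everything reduces to a single ``reflection step'': for each $i\in I$ the composite $w\,s_i^{r_iy}\colon r_iy\to x$ satisfies
\[
N\!\bigl(w\,s_i^{r_iy}\bigr)=N(w)+1\ \text{if}\ w\alpha_i^{y}\in R^{x+},\qquad N\!\bigl(w\,s_i^{r_iy}\bigr)=N(w)-1\ \text{if}\ w\alpha_i^{y}\in -R^{x+}.
\]

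To prove the reflection step I would first refine \cref{lem:root_bij} into a statement about positivity. Since $a_{ij}^{z}\le 0$ for $j\ne i$, one has $s_i^{z}\alpha_j^{z}=\alpha_j^{r_iz}-a_{ij}^{z}\alpha_i^{r_iz}\in R^{r_iz+}$; combining this with the bijection $s_i^{z}\colon R^{z}\setminus\{-\alpha_i^{z}\}\to R^{r_iz}\setminus\{-\alpha_i^{r_iz}\}$ and with $R^{z}=R^{z+}\sqcup(-R^{z+})$ (this is where \textbf{(CG3)} enters) shows that $s_i^{z}$ restricts to a bijection $R^{z+}\setminus\{\alpha_i^{z}\}\xrightarrow{\ \sim\ } R^{r_iz+}\setminus\{\alpha_i^{r_iz}\}$ and sends $\alpha_i^{z}\mapsto -\alpha_i^{r_iz}$. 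Applying this at $z=r_iy$ and substituting $\beta=s_i^{r_iy}\alpha$, the index set defining $N(w\,s_i^{r_iy})$ is carried bijectively onto $\bigl(R^{y+}\setminus\{\alpha_i^{y}\}\bigr)\cup\{-\alpha_i^{y}\}$, which differs from the set $R^{y+}$ defining $N(w)$ only in that $\alpha_i^{y}$ is replaced by $-\alpha_i^{y}$. Since $w(-\alpha_i^{y})=-w\alpha_i^{y}$ and exactly one of $\pm w\alpha_i^{y}$ lies in $-R^{x+}$ by \textbf{(CG3)}, precisely one of the two swapped elements contributes to its respective count, giving the asserted change of $\pm1$.

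Granting the reflection step, the two inequalities follow by routine inductions. For $N(w)\le l(w)$: pick a reduced expression $w=\operatorname{id}_x s_{i_n}\cdots s_{i_1}$ with $n=l(w)$ and form its left partial products $w_0=\operatorname{id}_x,\ w_1,\dots,\ w_n=w$, each obtained from the previous by right-multiplication with one simple generator; the reflection step changes $N$ by $\pm1$ at each stage, and $N(\operatorname{id}_x)=0$, so $N(w)\le n$. For $l(w)\le N(w)$, induct on $N(w)$: if $N(w)=0$ then $w\alpha_i^{y}\notin -R^{x+}$ for all $i$, hence $w\alpha_i^{y}\in R^{x+}$ for all $i$ by \textbf{(CG3)}, so \textbf{(CG4)} gives $w=\operatorname{id}_x$ and $l(w)=0$; if $N(w)>0$ then $w\neq\operatorname{id}_x$, so by \textbf{(CG4)} (contrapositive) and \textbf{(CG3)} there is $i$ with $w\alpha_i^{y}\in -R^{x+}$, whence $N(w\,s_i^{r_iy})=N(w)-1$, the inductive hypothesis gives $l(w\,s_i^{r_iy})\le N(w)-1$, and the identity $w=(w\,s_i^{r_iy})\,s_i^{y}$ yields $l(w)\le l(w\,s_i^{r_iy})+1\le N(w)$. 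Together these give $l(w)=N(w)$.

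The only delicate point is the reflection step --- more precisely, the bookkeeping of which copy $(\mathbb{Z}^{I})^{z}$ each morphism and each root lives in, and the sign conventions that make $s_i^{z}$ a genuine involution, $s_i^{r_iz}s_i^{z}=\operatorname{id}$ --- after which the remainder is the standard Coxeter-theoretic argument, with \textbf{(CG4)} playing the role of the implication ``$w$ fixes every simple root positively $\Rightarrow w=\operatorname{id}$''. In keeping with Section~2 I would also note that the conclusion is equivalent to saying that, for a fixed object $x$, the edge-coloured graph with vertex set $\bigsqcup_{y}\operatorname{Hom}_{W(G)}(y,x)$, an edge $w\sim w\,s_i^{r_iy}$ for each $i$, and that edge coloured by the unique positive root among $\pm\,w\alpha_i^{y}$, is a rainbow boomerang graph in the sense of \cref{exchange}; this is the guise in which the theorem will be used below, although the reformulation is established by the argument above rather than deduced formally from the results of Section~2.
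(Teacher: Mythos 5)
The paper does not prove this statement; it is cited directly from Heckenberger--Schneider \cite{heckenberger2020hopf}, Theorem~9.3.5, whose proof is the argument you reconstruct. Your reflection step, the upper bound $N(w)\le l(w)$ by stepping through a reduced word, and the lower bound by induction on $N(w)$ with \textbf{(CG4)} providing a simple generator sent negative are all correct and constitute the standard generalization of the Coxeter-group proof. One expository point worth tightening: to promote the bare bijection of \cref{lem:root_bij} to the positivity refinement $s_i^z\bigl(R^{z+}\setminus\{\alpha_i^z\}\bigr)=R^{r_iz+}\setminus\{\alpha_i^{r_iz}\}$, observing that each $s_i^z\alpha_j^z$ ($j\ne i$) is positive is not by itself enough --- one still needs the coefficient argument that any $\beta\in R^{z+}\setminus\{\alpha_i^z\}$ has a positive coefficient on some $\alpha_{j_0}^z$ with $j_0\ne i$, that this coefficient is preserved by $s_i^z$, and that \textbf{(CG3)} then forces $s_i^z\beta\in R^{r_iz+}$; this refinement is in fact already part of the full statement of [HS, Lemma~9.1.19], of which the paper quotes only the first half.
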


\begin{remark}\label{w.st}\cite{heckenberger2020hopf}
    A semi Cartan graph is called \textit{standard} if \( A^{x} \) is independent of \( x \in V \).
    
    For a standard Cartan graph \( G \):
    \[
    G \text{ is finite} \iff A^{x} \text{ is of finite type}.
    \]
    This result and the term \textit{"real root"} are from Kac \cite{kac1990infinite} and are consistent with the definitions provided therein.

    In particular, the Weyl groupoid arising from a finite Cartan graph with a single vertex can be identified with the Weyl group of type \( A^x \).
\end{remark}

\begin{example}\cite{heckenberger2020hopf,cuntz2009weyltwo,cuntz2014frieze}\label{w.5}
\begin{center}
    \begin{tikzpicture}[scale=1.33] % Reduced scale to make the width 2/3

    % Nodes (Clockwise arrangement with 2x2 matrices)
    \node (1) at (0, 0) {\(\begin{bmatrix} 2 & -2\\ -2& 2 \end{bmatrix}\)};
    \node (2) at (2, 0) {\(\begin{bmatrix} 2 & -2\\ -1 & 2 \end{bmatrix}\)};
    \node (3) at (4, 0) {\(\begin{bmatrix} 2 & -3 \\ -1 & 2 \end{bmatrix}\)};
    \node (4) at (6, 0) {\(\begin{bmatrix} 2 & -3 \\ -1& 2 \end{bmatrix}\)};
    \node (5) at (8, 0) {\(\begin{bmatrix} 2 & -2 \\ -1& 2 \end{bmatrix}\)};
    \node (6) at (8, -1.5) {\(\begin{bmatrix} 2 & -2\\ -2& 2 \end{bmatrix}\)};
    \node (7) at (6, -1.5) {\(\begin{bmatrix} 2 & -1\\ -2& 2 \end{bmatrix}\)};
    \node (8) at (4, -1.5) {\(\begin{bmatrix} 2 & -1 \\ -3 & 2 \end{bmatrix}\)};
    \node (9) at (2, -1.5) {\(\begin{bmatrix} 2 & -1\\ -3& 2 \end{bmatrix}\)};
    \node (10) at (0, -1.5) {\(\begin{bmatrix} 2 & -1\\ -2& 2 \end{bmatrix}\)};

 \draw (1) -- (2) node[midway, above] {1};
\draw (2) -- (3) node[midway, above] {2};
\draw (3) -- (4) node[midway, above] {1};
\draw (4) -- (5) node[midway, above] {2};
\draw (5) -- (6) node[midway, right] {1};
\draw (6) -- (7) node[midway, above] {2};
\draw (7) -- (8) node[midway, above] {1};
\draw (8) -- (9) node[midway, above] {2};
\draw (9) -- (10) node[midway, above] {1};
\draw (10) -- (1) node[midway, left] {2};

\end{tikzpicture}
\end{center}

From the semi Cartan graph of rank two above, considering (CG2), the sequence \( (2,1,3,1,2,2,1,3,1,2) \) naturally corresponds to it. Determining the real root system of this semi Cartan graph can be confirmed to be equivalent to considering a frieze with this sequence as the quiddity sequence. In this case, the frieze is as follows, confirming that it is a finite Cartan graph.

For example, when the top-left vertex of the graph above is denoted as \( x \), the set 
\[
R^{x+} = \{\alpha_1^x, 2\alpha_1^x + \alpha_2^x, \alpha_1^x + \alpha_2^x, \alpha_1^x + 2\alpha_2^x, \alpha_2^x\},
\]
corresponds to the bold column in the following frieze. Similarly, it can be confirmed that the real root system of the adjacent vertex corresponds to the sequence shifted by one position.
Furthermore, the frieze extended to negative entries can also be interpreted in terms of negative roots.

\begin{center}
\begin{tikzpicture}[xscale=1, yscale=1.2, every node/.style={anchor=mid}]

% Row 1 (all 0, second element bold)
\foreach \x/\val in {0/0, 1/\textbf{0}, 2/0, 3/0, 4/0, 5/0, 6/0, 7/0, 8/0, 9/0, 10/0, 11/0, 12/0, 13/0, 14/0} {
    \node at (\x, 0) {\val};
}

% Row 2 (all 1, first and second elements bold, shifted by 0.5)
\foreach \x/\val in {0/\textbf{1}, 1/\textbf{1}, 2/1, 3/1, 4/1, 5/1, 6/1, 7/1, 8/1, 9/1, 10/1, 11/1, 12/1, 13/1} {
    \node at (\x+0.5, -1) {\val};
}

% Row 3 (221312213122131..., second and third elements bold, no shift)
\foreach \x/\val in {0/2, 1/\textbf{2}, 2/\textbf{1}, 3/3, 4/1, 5/2, 6/2, 7/1, 8/3, 9/1, 10/2, 11/2, 12/1, 13/3, 14/1} {
    \node at (\x, -2) {\val};
}

% Row 4 (312213122131221..., second and third elements bold, shifted by 0.5)
\foreach \x/\val in {0/3, 1/\textbf{1}, 2/\textbf{2}, 3/2, 4/1, 5/3, 6/1, 7/2, 8/2, 9/1, 10/3, 11/1, 12/2, 13/2} {
    \node at (\x+0.5, -3) {\val};
}

% Row 5 (all 1, third and fourth elements bold, no shift)
\foreach \x/\val in {0/1, 1/1, 2/\textbf{1}, 3/\textbf{1}, 4/1, 5/1, 6/1, 7/1, 8/1, 9/1, 10/1, 11/1, 12/1, 13/1, 14/1} {
    \node at (\x, -4) {\val};
}

% Row 6 (all 0, third element bold, shifted by 0.5)
\foreach \x/\val in {0/0, 1/0, 2/\textbf{0}, 3/0, 4/0, 5/0, 6/0, 7/0, 8/0, 9/0, 10/0, 11/0, 12/0, 13/0} {
    \node at (\x+0.5, -5) {\val};
}

\end{tikzpicture}
\end{center}

By a similar argument, it can be seen that a connected (simply connected) finite Cartan graphs of rank two is equivalent to the concept of frieze patterns. In particular, according to the classification results of Conway and Coxeter \cite{conway1973triangulated}, the isomorphism classes are parametrized by the triangulations of regular polygons. In particular, the current example corresponds to a triangulation of regular pentagon.

\end{example}

\begin{definition}

A morphism of vertex-labeled edge-colored graphs is a graph morphism that preserves both the labels of the vertices and the colors of the edges.

Below, let the semi Cartan graph be connected.  
Consider a vertex-labeled edge-colored graph morphism \( \widetilde{G} \to G \) between semi Cartan graphs with the same color set \( I \). We call \((\widetilde{G}, G, \pi)\) a covering.

\end{definition}

\begin{proposition}[\cite{heckenberger2020hopf}, Proposition 10.1.5]
Let \( (\widetilde{G}, G, \pi) \) be a covering. Then there exists a natural functor on the semi-Weyl groupoid:
\[
F_{\pi}: W(\widetilde{G}) \to W(G),
\]
which induces an injective homomorphism
\[
\operatorname{Aut}_{W(\widetilde{G})}(y) \rightarrow   \operatorname{Aut}_{W(G)}(\pi(y))
\]
for each vertex \( y \in \widetilde{G} \).
\end{proposition}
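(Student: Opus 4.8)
The plan is to build $F_\pi$ by the evident prescription on words and to check it is well defined using the standard identification of a morphism of a semi Weyl groupoid with an element of $\operatorname{Aut}_\mathbb{Z}(\mathbb{Z}^I)$. First I would record two facts about a covering $\pi\colon\widetilde G\to G$. Because $\pi$ preserves edge colors and both graphs are properly colored and $\#I$-regular (axiom \textbf{(CG1)}), $\pi$ intertwines the involutions: the unique $i$-colored edge at $x$ is sent to the unique $i$-colored edge at $\pi(x)$, so $\pi(r_i x)=r_i\pi(x)$ for all $x\in\widetilde V$ and $i\in I$; iterating, $\pi(r_{i_t}\cdots r_{i_1}x)=r_{i_t}\cdots r_{i_1}\pi(x)$. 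Because $\pi$ preserves vertex labels, $A^x=A^{\pi(x)}$ for every $x$. Finally recall that, transported along the standard isomorphisms $\varphi$, the generator $s_i^x$ becomes the automorphism $\alpha_j\mapsto\alpha_j-a^x_{ij}\alpha_i$ of $\mathbb{Z}^I$, which depends only on $A^x$; hence for any sequence $(i_1,\dots,i_t)$ the composites $s_{i_t}^{r_{i_{t-1}}\cdots r_{i_1}x}\cdots s_{i_1}^{x}$ and $s_{i_t}^{r_{i_{t-1}}\cdots r_{i_1}\pi(x)}\cdots s_{i_1}^{\pi(x)}$ define one and the same element of $\operatorname{Aut}_\mathbb{Z}(\mathbb{Z}^I)$.

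Next I would define $F_\pi$ on objects by $x\mapsto\pi(x)$, and on a morphism of $W(\widetilde G)$ represented by a word $(i_1,\dots,i_t)$ based at $x$ by sending it to the morphism of $W(G)$ represented by the same word based at $\pi(x)$. To see this is well defined: two words based at $x$ represent the same morphism of $W(\widetilde G)$ precisely when they have the same target vertex and induce the same $\mathbb{Z}$-linear map, equivalently (via the $\varphi$'s) the same element of $\operatorname{Aut}_\mathbb{Z}(\mathbb{Z}^I)$; by the observations above their $F_\pi$-images then have the same source $\pi(x)$, the same target vertex (apply $\pi$ to the equality of targets and use that $\pi$ intertwines the $r_i$), and the same associated element of $\operatorname{Aut}_\mathbb{Z}(\mathbb{Z}^I)$ — hence they represent the same morphism of $W(G)$. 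Functoriality is then immediate: the empty word at $x$ maps to the empty word at $\pi(x)$, so $\operatorname{id}_x\mapsto\operatorname{id}_{\pi(x)}$, and $F_\pi$ sends a concatenation of words to the concatenation of the images, i.e. it respects composition.

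It remains to analyse the automorphism groups. Since $F_\pi(y)=\pi(y)$, the functor restricts to a map $\operatorname{Aut}_{W(\widetilde G)}(y)=\operatorname{Hom}_{W(\widetilde G)}(y,y)\to\operatorname{Hom}_{W(G)}(\pi(y),\pi(y))=\operatorname{Aut}_{W(G)}(\pi(y))$, which is a group homomorphism because $F_\pi$ is a functor. For injectivity I would use that on each hom-set the passage from a morphism to its associated element of $\operatorname{Aut}_\mathbb{Z}(\mathbb{Z}^I)$ is injective — a morphism of a semi Weyl groupoid is by definition a $\mathbb{Z}$-linear map between the fixed copies $(\mathbb{Z}^I)^x$, $(\mathbb{Z}^I)^y$, and the $\varphi$'s are isomorphisms — while, as noted, $F_\pi$ leaves this associated element unchanged; hence $F_\pi$ is injective on every hom-set, in particular on $\operatorname{Aut}_{W(\widetilde G)}(y)$.

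I expect the one genuinely delicate point — the main obstacle — to be the well-definedness of $F_\pi$ on morphisms, because the relation identifying words in $W(\widetilde G)$ is not given by an explicit presentation but only by equality of the underlying linear maps; the remedy is exactly the observation that $s_i^x$, read off in $\operatorname{Aut}_\mathbb{Z}(\mathbb{Z}^I)$ via $\varphi^x$, depends only on $A^x$ and is therefore preserved by any label-preserving covering.
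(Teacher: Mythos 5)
The paper does not prove this proposition; it is cited verbatim from \cite[Proposition 10.1.5]{heckenberger2020hopf}, so there is no in-paper argument to compare against. Your proof is correct and is the natural one given the paper's definitions: a covering $\pi$ preserves colors (hence intertwines the involutions $r_i$) and preserves the vertex labels $A^x$ (hence, via the standard isomorphisms $\varphi$, leaves the associated element of $\operatorname{Aut}_{\mathbb Z}(\mathbb Z^I)$ of any word unchanged); since a morphism of the semi Weyl groupoid is literally the pair (target vertex, linear map), this gives both well-definedness of $F_\pi$ on words and injectivity of $F_\pi$ on each hom-set, in particular on $\operatorname{Aut}_{W(\widetilde G)}(y)$. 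You have also correctly isolated the only genuinely delicate step, namely well-definedness, and resolved it the right way.
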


\begin{definition}\cite{heckenberger2020hopf}
A semi-Cartan graph \( G \) is called \textbf{simply connected} if the map \( \pi \) is an isomorphism for every covering \( (\widetilde{G}, G, \pi) \).

Equivalently, \( G \) is simply connected if
\[
\# \operatorname{Hom}_{W(G)}(x, y) \leq 1 \quad \text{for all } x, y \in V.
\]
\end{definition}

\begin{proposition}[\cite{heckenberger2020hopf}, Proposition 10.1.6]\label{prop:simconnec}
Let \( G \) be a Cartan graph. For \( x \in V(G) \) and a subgroup \( U \subseteq \operatorname{Aut}_{W(G)}(x) \), there exists a covering \( (\widetilde{G}, G, \pi) \) and a vertex \( \widetilde{x} \in V(\widetilde{G}) \) such that:
\[
\pi(\widetilde{x}) = x \quad \text{and} \quad F_{\pi}(\operatorname{Aut}_{W(\widetilde{G})}(\widetilde{x})) = U.
\]
Moreover, such a covering is unique up to isomorphism, and
\[
\# \pi^{-1}(x) = [\operatorname{Aut}_{W(G)}(x) : U].
\]

In particular, a simply connected covering \( \operatorname{SC}(G) \) of \( G \), as a Cartan graph, always exists and is unique up to isomorphism.
\end{proposition}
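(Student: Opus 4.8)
We outline the standard argument, which realises $\widetilde G$ by hand out of cosets of $U$ in the connected groupoid $W(G)$, in the spirit of the construction of covering spaces from subgroups of the fundamental group.

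\textbf{Construction of $\widetilde G$.} Since $W(G)$ is a connected groupoid, $\operatorname{Hom}_{W(G)}(x,y)$ is a right torsor over $\operatorname{Aut}_{W(G)}(x)$ for every $y\in V(G)$, so $U$ acts on it freely on the right by composition. Put
\[
V(\widetilde G):=\bigsqcup_{y\in V(G)}\operatorname{Hom}_{W(G)}(x,y)/U,
\]
and let $\pi$ send the class $wU$ of $w\in\operatorname{Hom}_{W(G)}(x,y)$ to $y$. For $i\in I$ define $r_i(y,wU):=(r_iy,\,(s_i^{y}w)U)$; this is well defined because $uU=U$ for $u\in U$, and it is an involution since $s_i^{r_iy}s_i^{y}=\operatorname{id}$ (a consequence of \textbf{(CG2)} and $a_{ii}=2$). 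Colour $\widetilde G$ by $I$, placing an $i$-coloured edge between $(y,wU)$ and $r_i(y,wU)$, and label $(y,wU)$ by $A^{y}$. Then \textbf{(CG1)} holds because $\widetilde G$ is $\#I$-regular and properly coloured by construction, and \textbf{(CG2)} for $\widetilde G$ is inherited verbatim from \textbf{(CG2)} for $G$; thus $\widetilde G$ is a semi Cartan graph and $\pi$ a covering. It is connected: as every coset $wU$ is represented by a word $s_{i_t}\cdots s_{i_1}$, the walk with colours $i_1,\dots,i_t$ joins $\widetilde x:=(x,\operatorname{id}_xU)$ to $(y,wU)$.

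\textbf{$\widetilde G$ is a Cartan graph.} Identify $(\mathbb Z^{I})^{(y,wU)}$ with $(\mathbb Z^{I})^{y}$ via the standard bases; under this identification the functor $F_\pi$ of \cite[Proposition 10.1.5]{heckenberger2020hopf} intertwines the maps $s_i$. I claim $R^{(y,wU)}=R^{y}$ (compatibly with positivity) for every vertex of $\widetilde G$. The inclusion ``$\subseteq$'' is immediate, since $F_\pi$ carries a walk in $\widetilde G$ to a walk in $W(G)$. For ``$\supseteq$'', a root $w'\alpha_i^{z}\in R^{y}$ with $w'\in\operatorname{Hom}_{W(G)}(z,y)$ is realised at $(y,wU)$ by lifting any word for $w'$ to the walk in $\widetilde G$ that starts at $(z,\,(w')^{-1}wU)$ and hence ends at $(y,wU)$. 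Consequently \textbf{(CG3)} for $\widetilde G$ follows from \textbf{(CG3)} for $G$. For \textbf{(CG4)}, if $\widetilde w\in\operatorname{Hom}_{W(\widetilde G)}(\widetilde y_1,\widetilde y_2)$ sends every $\alpha_i^{\widetilde y_2}$ into $R^{\widetilde y_1+}$, then so does $F_\pi(\widetilde w)$ at $\pi\widetilde y_1$, whence $F_\pi(\widetilde w)=\operatorname{id}$ by \textbf{(CG4)} for $G$; applying the word $\widetilde w$ to $\widetilde y_1$ then gives $\widetilde y_2=\widetilde y_1$, and injectivity of $F_\pi$ on $\operatorname{Aut}_{W(\widetilde G)}(\widetilde y_1)$ forces $\widetilde w=\operatorname{id}$. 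So $\widetilde G$ is a Cartan graph, finite whenever $G$ is.

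\textbf{The two identities and uniqueness.} An element of $\operatorname{Aut}_{W(\widetilde G)}(\widetilde x)$ is a word $s_{i_t}\cdots s_{i_1}$ with $r_{i_t}\cdots r_{i_1}\widetilde x=\widetilde x$, i.e.\ with $r_{i_t}\cdots r_{i_1}x=x$ and $(s_{i_t}\cdots s_{i_1})U=U$; since $F_\pi$ sends it to $s_{i_t}\cdots s_{i_1}\in\operatorname{Aut}_{W(G)}(x)$, we get $F_\pi(\operatorname{Aut}_{W(\widetilde G)}(\widetilde x))\subseteq U$, and the reverse inclusion holds because any $u\in U$, written as a word, lifts to a walk closing up at $\widetilde x$. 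Moreover $\pi^{-1}(x)=\operatorname{Hom}_{W(G)}(x,x)/U=\operatorname{Aut}_{W(G)}(x)/U$, so $\#\pi^{-1}(x)=[\operatorname{Aut}_{W(G)}(x):U]$. For uniqueness, given another connected covering $(\widetilde G',G,\pi')$ with $\widetilde x'$ satisfying $\pi'(\widetilde x')=x$ and $F_{\pi'}(\operatorname{Aut}_{W(\widetilde G')}(\widetilde x'))=U$ (pass to the component of $\widetilde x'$ if necessary), one defines $\psi\colon\widetilde G\to\widetilde G'$ by $\psi(y,wU):=r_{i_t}\cdots r_{i_1}\widetilde x'$ for any word $s_{i_t}\cdots s_{i_1}$ representing $w$; granting that $\psi$ is well defined, it is a morphism of vertex-labeled edge-colored graphs commuting with the $r_i$ and with the projections and sending $\widetilde x\mapsto\widetilde x'$, and the symmetric recipe produces its inverse. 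The ``in particular'' assertion is the case $U=\{\operatorname{id}_x\}$, for which $\#\operatorname{Hom}_{W(\widetilde G)}(\widetilde y_1,\widetilde y_2)\le 1$: by the coset construction at most one morphism of $W(G)$ can arise as $F_\pi(\widetilde w)$ with $\widetilde w\colon\widetilde y_1\to\widetilde y_2$, and by well-definedness of lifting it has a unique lift; also $\#\pi^{-1}(x)=\#\operatorname{Aut}_{W(G)}(x)$.

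\textbf{Main obstacle.} The one genuinely delicate point—needed both for $R^{(y,wU)}=R^{y}$ and for the well-definedness of $\psi$—is the word-problem fact that, in a Cartan graph, two edge-walks with a common initial vertex representing the same morphism of $W(G)$ are connected by a sequence of braid moves and length-two backtracking cancellations, each of which manifestly preserves the terminal vertex of a lifted walk. This is where the exchange property \cref{poid_lwnw} enters, and it is the crux of the argument.
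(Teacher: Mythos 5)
The paper does not prove this proposition; it is cited verbatim from Heckenberger--Schneider (Proposition 10.1.6), so there is no internal argument to compare your write-up against, and I assess it on its own terms. Your coset construction of $\widetilde G$ is the right one, and the verification that it is a covering, that $R^{(y,wU)}=R^{y}$, and that \textbf{(CG3)}, \textbf{(CG4)} hold (hence that $\widetilde G$ is a Cartan graph, finite when $G$ is) is sound; the two displayed identities are also correctly derived.

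The problem is the final ``Main obstacle'' paragraph, which misdiagnoses what is left and leaves the uniqueness step genuinely open. No Matsumoto-type word-problem fact is needed anywhere. For the constructed $\widetilde G$, the formula $r_{i_t}\cdots r_{i_1}(z,w'U)=(r_{i_t}\cdots r_{i_1}z,\,(s_{i_t}\cdots s_{i_1})w'U)$ already shows that the endpoint of a lifted walk depends only on the composite morphism $s_{i_t}\cdots s_{i_1}$ of $W(G)$, not on the chosen word, so the ``$\supseteq$'' half of $R^{(y,wU)}=R^{y}$ is automatic and the appeal to \cref{poid_lwnw} there is spurious. Where a real argument is missing is exactly where you wrote ``granting that $\psi$ is well defined.'' For a second covering $(\widetilde G',G,\pi')$ the coset bookkeeping is unavailable, and the braid-move claim you cite would at best compare two words for the \emph{same} morphism $w$, whereas $\psi$ must compare words for $w$ and for $wu$ with $u\in U$, so even if granted it does not close the gap. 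The correct tool is \textbf{(CG4)} for $\widetilde G'$ (one should take $\widetilde G'$ to be a Cartan graph, in line with the proposition's last sentence) together with the injectivity of $F_{\pi'}$ on automorphism groups, i.e.\ precisely the pair of facts you already used to verify \textbf{(CG4)} for $\widetilde G$: if two lifted walks from $\widetilde x'$ project to the same $w$ and end at $\widetilde y_1,\widetilde y_2$, then $\widetilde w_2\widetilde w_1^{-1}\in\operatorname{Hom}_{W(\widetilde G')}(\widetilde y_1,\widetilde y_2)$ has identity underlying linear map, so \textbf{(CG4)} gives $\widetilde y_1=\widetilde y_2$; applying the same with $w$ replaced by $u\in U$ (which by hypothesis is $F_{\pi'}$ of \emph{some} loop at $\widetilde x'$) shows every word for $u$ lifts to a loop at $\widetilde x'$, and concatenating a word for $w$ with one for $u$ then yields $\psi(y,wU)=\psi(y,wuU)$. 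Replacing the ``Main obstacle'' paragraph with this \textbf{(CG4)}-based argument closes the only real gap.
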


\begin{example}
By (CG4), the vertex set \( V \) of a connected simply connected Cartan graph can be identified with a set \( \{ w \operatorname{id}_x \mid w \in \operatorname{Hom}_{W(G)}(x, y), \, y \in V \} \), where \( x \in V \) is fixed. Clearly, a connected Cartan graph is loopless if and only if it is simply connected. If \( G \) is standard, then \( SC(G) \), as a graph, is the same as the Cayley graph of the Weyl group. By \cite{inoue2023hamiltonian,yamane2021hamilton}, a simply connected Cartan graph is Hamiltonian (i.e. there exist a path that visits every vertex of a graph exactly once and returns to the starting vertex). 
\end{example}

\begin{example}

The isomorphism classes of connected standard Cartan graphs of type \( A_2 \) correspond to the conjuate classes of subgroups of \( S_3 \) via the following Galois correspondence:

\begin{center}
\begin{tikzpicture}

% First Graph
\node (A1) at (0, 3) {\( S_3 \)};
\node (B1) at (1.5, 2) {\( \mathbb{Z}/3\mathbb{Z} \)};
\node (C1) at (-1.5, 1) {\( \mathbb{Z}/2\mathbb{Z} \)};
\node (D1) at (0, 0) {e};

\draw (A1) -- (B1);
\draw (A1) -- (C1);
\draw (B1) -- (D1);
\draw (C1) -- (D1);

% Second Graph
\node (A2) at (5, 3) {\begin{tikzpicture}

% Nodes
\node[draw, circle, fill=black, inner sep=1.5pt] (A) at (0, 0) {};

\end{tikzpicture}};
\node (B2) at (6.5, 2) {\begin{tikzpicture}

% Nodes
\node[draw, circle, fill=black, inner sep=1.5pt] (A) at (0, 0) {};
\node[draw, circle, fill=black, inner sep=1.5pt] (B) at (0.5, 0) {};

% Edges (double edge)
\draw[bend right=30] (A) to (B);
\draw[bend left=30] (A) to (B);

\end{tikzpicture}};
\node (C2) at (3.5, 1) {\begin{tikzpicture}

% Nodes (equal-length edges in a stretched rectangle)
\node[draw, circle, fill=black, inner sep=1.5pt] (A) at (0, 0) {};
\node[draw, circle, fill=black, inner sep=1.5pt] (B) at (0.5,0) {};
\node[draw, circle, fill=black, inner sep=1.5pt] (C) at (1, 0) {};

% Edges (all equal length)
\draw (A) -- (B);
\draw (B) -- (C);

\end{tikzpicture}};
\node (D2) at (5, -0) {\begin{tikzpicture}

% Nodes
\node[draw, circle, fill=black, inner sep=1.5pt] (A) at (0, 0.5) {};
\node[draw, circle, fill=black, inner sep=1.5pt] (B) at (0.5, 0.5) {};
\node[draw, circle, fill=black, inner sep=1.5pt] (C) at (1, 0.5) {};
\node[draw, circle, fill=black, inner sep=1.5pt] (D) at (1, 0) {};
\node[draw, circle, fill=black, inner sep=1.5pt] (E) at (0.5, 0) {};
\node[draw, circle, fill=black, inner sep=1.5pt] (F) at (0, 0) {};

% Edges
\draw (A) -- (B);
\draw (B) -- (C);
\draw (C) -- (D);
\draw (D) -- (E);
\draw (E) -- (F);
\draw (F) -- (A);

\end{tikzpicture}};

\draw (A2) -- (B2);
\draw (A2) -- (C2);
\draw (B2) -- (D2);
\draw (C2) -- (D2);

\end{tikzpicture}
\end{center}

In more detail, the graph:
\[
\begin{tikzpicture}

% Nodes (equal-length edges in a stretched rectangle)
\node[draw, circle, fill=black, inner sep=1.5pt] (A) at (0, 0) {};
\node[draw, circle, fill=black, inner sep=1.5pt] (B) at (0.5, 0) {};
\node[draw, circle, fill=black, inner sep=1.5pt] (C) at (1, 0) {};

% Edges (all equal length)
\draw (A) -- (B);
\draw (B) -- (C);

\end{tikzpicture}
\]
is represented as:
\[
\begin{tikzpicture}[scale=1.5]

% Nodes (matrices directly placed with increased edge length)
\node at (0, 0) {\(\begin{bmatrix} 2 & -1 \\ -1 & 2 \end{bmatrix}\)};
\node at (2.25, 0) {\(\begin{bmatrix} 2 & -1 \\ -1 & 2 \end{bmatrix}\)};
\node at (4.5, 0) {\(\begin{bmatrix} 2 & -1 \\ -1 & 2 \end{bmatrix}\)};

% Edges with labels
\draw (0.75, 0) -- node[above] {2} (1.5, 0);
\draw (3, 0) -- node[above] {1} (3.75, 0);

\end{tikzpicture}
\]
which is the Cartan graph of \( \mathfrak{gl}(2|1) \) in the sense of \cref{BNmain}. The corresponding Weyl group is isomorphic to \( \mathbb{Z}/2\mathbb{Z} \).

Additionally, the graph:
\[
\begin{tikzpicture}

% Nodes
\node[draw, circle, fill=black, inner sep=1.5pt] (A) at (0, 0) {};

\end{tikzpicture}
\]
is represented as:
\[
\begin{bmatrix} 2 & -1 \\ -1 & 2 \end{bmatrix}
\]
which is the Cartan graph of \( \mathfrak{sl}_3 \) in the sense of \cref{BNmain}. The corresponding Weyl group is isomorphic to \( S_3 \).
\end{example}

\subsection{Exchange property of path subgroupoids}

Below, let the semi Cartan graph be connected.

% 定義：orb α と Δ
\begin{definition}\label{A2delta}
The path subgroupoid \( P(G) \) of a semi Cartan graph \( G \) is defined as the subgroupoid of the semi Weyl groupoid \( W(G) \) generated by morphisms of the form:
\[
\left\{ s_{i_t} \dotsb s_{i_1} \operatorname{id}_x \,\middle|\, r_{i_{s+1}} \dotsb r_{i_1} x \neq r_{i_s} \dotsb r_{i_1} x \text{ for } 1 \leq s \leq t - 1 \right\},
\]
where \( x \in V \). For \( x, y \in V \), the set of morphisms between \( x \) and \( y \) in this subgroupoid is denoted by \( \operatorname{Hom}_{P(G)}(x, y) \).

For \( \alpha \in R^{x} \), we define:
\[
\operatorname{orb}(\alpha) := \left\{ w\alpha \,\middle|\, w \in \operatorname{Hom}_{P(G)}(x, y) \right\}\subseteq \bigsqcup_{y \in V} R^{y} ,
\]
and
\[
\Delta := \left\{ \operatorname{orb}(\alpha) \,\middle|\, \alpha \in R^{x} \right\} .
\]
This definition does not depend on the choice of \( x \).

A semi Cartan graph \( G \) is said to be \emph{path simply connected} if \[
\# \operatorname{Hom}_{P(G)}(x, y) = 1 \quad \text{for any } x, y.
\]

Moreover, if \( G \) satisfies \textbf{(CG3)}, this condition is equivalent to the following:
For a fixed point \( x \) and any \( O \in \Delta \), \( \#(O \cap R^{x}) = 1 \) holds.

Furthermore, if \( G \) is finite, this condition is also equivalent to \( \# \Delta = \# R^{x} \).

\end{definition}

\begin{lemma}
 path simply connected semi Cartan  graph is multiedge free
\end{lemma}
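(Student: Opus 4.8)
The plan is to unravel what a multiedge means for a semi Cartan graph and to show that its presence would contradict path simple connectivity. Recall that a multiedge between vertices $x$ and $y$ means there are two distinct colors $i \neq j \in I$ with $r_i x = r_j x = y$. I would first observe that the two one-step morphisms $s_i^x \operatorname{id}_x$ and $s_j^x \operatorname{id}_x$ both lie in $\operatorname{Hom}_{P(G)}(x,y)$: each is a single generator of the form appearing in \cref{A2delta}, since the defining condition on such a word is vacuous for words of length $t=1$. So it suffices to show that $s_i^x \operatorname{id}_x \neq s_j^x \operatorname{id}_x$ as morphisms in $W(G)$, which immediately gives $\#\operatorname{Hom}_{P(G)}(x,y) \geq 2$, contradicting path simple connectedness.

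The key computation is therefore that the two maps $s_i^x$ and $s_j^x$ from $(\mathbb{Z}^I)^x$ to $(\mathbb{Z}^I)^y$ are genuinely different. From the defining formula $s_i^x(\alpha_k^x) = \alpha_k^{r_i x} - a_{ik}^x \alpha_i^{r_i x}$, one sees $s_i^x(\alpha_i^x) = -\alpha_i^{y}$, whereas $s_j^x(\alpha_i^x) = \alpha_i^{y} - a_{ji}^x \alpha_j^{y}$. Since $a_{ji}^x \leq 0$ for $i \neq j$ in a generalized Cartan matrix, the coefficient of $\alpha_i^y$ in $s_j^x(\alpha_i^x)$ is $1$, while in $s_i^x(\alpha_i^x)$ it is $-1$; these cannot agree, so $s_i^x \neq s_j^x$. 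This is elementary and is the only real content; I expect no obstacle here beyond being careful with the copies $(\mathbb{Z}^I)^x$, $(\mathbb{Z}^I)^y$ and their standard bases.

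One subtlety worth addressing explicitly: I should confirm that the length-one words $s_i^x\operatorname{id}_x$ and $s_j^x\operatorname{id}_x$ do indeed belong to the generating set of $P(G)$ as written, i.e. that a single reflection always qualifies. Looking at the condition ``$r_{i_{s+1}}\cdots r_{i_1} x \neq r_{i_s}\cdots r_{i_1} x$ for $1 \leq s \leq t-1$'', when $t=1$ the range $1 \leq s \leq 0$ is empty, so the constraint is vacuously satisfied and every generator $s_i^x\operatorname{id}_x$ is in $P(G)$. Hence both morphisms lie in $\operatorname{Hom}_{P(G)}(x,y)$, and combined with the previous paragraph we get two distinct elements there. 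I would also remark that the statement is really the contrapositive: if $G$ has a multiedge then it is not path simply connected, equivalently a path simply connected semi Cartan graph is multiedge free.

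The main obstacle, such as it is, is purely bookkeeping: keeping track of which copy of $\mathbb{Z}^I$ each basis vector lives in, and making sure the comparison of $s_i^x$ and $s_j^x$ is carried out in the common target $(\mathbb{Z}^I)^y$. There is no deep difficulty, and the proof should be a few lines once these identifications are made precise.
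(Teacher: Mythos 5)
Your proof is correct and takes essentially the same approach as the paper: exhibit two distinct morphisms in a single Hom-set of $P(G)$ by a short explicit computation of where a simple root goes. The only (cosmetic) difference is that you compare $s_i^x$ and $s_j^x$ directly in $\operatorname{Hom}_{P(G)}(x,y)$, whereas the paper compares $s_j s_i$ with $s_i s_i = \operatorname{id}_x$ in $\operatorname{Hom}_{P(G)}(x,x)$; these are the same argument up to right-composition with the invertible $s_i^x$. One small remark: you do not actually need $a_{ji}^x \le 0$ — the coefficients of $\alpha_i^y$ are $-1$ and $1$ respectively simply because $\alpha_i^y, \alpha_j^y$ are distinct basis vectors, so the two images differ regardless of the off-diagonal entries of the Cartan matrix.
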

\begin{proof}
    If there were two edges with the labels \( i \) and \( j \) between two nodes \( x \) and \( y \), then we would have:
 \[
s_{j} s_{i} \cdot \alpha_{i}^x = s_{j} (-\alpha_{i}^y) = -\alpha_{i}^x - a_{ij}^x \alpha_{j}^x 
\neq \alpha_{i}^x = s_{i} s_{i} \cdot \alpha_{i}^x.
\]
Thus, we have : \(\# \operatorname{Hom}_{P(G)}(x, y) > 1\).
\end{proof}

\begin{example}
The following finite Cartan graph is multiedge-free but not path-simply connected.
\\
\begin{tikzpicture}[scale=3, every node/.style={align=center}] % Scale increased for better spacing

% Nodes with aligned labels
\node (1) at (0, 0.7) {$\begin{bmatrix} 2 & -1 \\ -2& 2 \end{bmatrix}$}; % Top-left
\node (2) at (1, 0.7) {$\begin{bmatrix} 2 & -1 \\ -2& 2 \end{bmatrix}$}; % Top-right
\node (3) at (1, 0) {$\begin{bmatrix} 2 & -1 \\ -2& 2 \end{bmatrix}$}; % Bottom-right
\node (4) at (0, 0) {$\begin{bmatrix} 2 & -1 \\ -2& 2 \end{bmatrix}$}; % Bottom-left

% Edges with labels
\draw (1) -- (2) node[midway, above] {1}; % Top edge
\draw (2) -- (3) node[midway, right] {2}; % Right edge
\draw (3) -- (4) node[midway, below] {1}; % Bottom edge
\draw (4) -- (1) node[midway, left] {2};  % Left edge

\end{tikzpicture}

\end{example}

\begin{example}
    The path subgroupoid of a simply connected Weyl groupoid is the Weyl groupoid itself. Hence, by the definition of simply connectedness, it is path simply connected.
\end{example}
\begin{example}
      semi Cartan trees are trivially path simply connected.
\end{example}

% Definition: Root-colored graph
\begin{definition}
% Definition of \( \Delta^x+ \)
When \( G \) is path simply connected, for \( O \in \Delta \), let \( O_{x} \in R^{x} \) be the unique element in \( O \cap R^{x} \). 
Define \( \Delta^{x+} \) as
\[
\Delta^{x+} = \{ O \in \Delta \mid O_x \in R^{x+} \},
\]
and \( \Delta^{\text{pure}+} \) as
\[
\Delta^{\text{pure}+} = \bigcap_{x \in V} \Delta^{x+}.
\]

For instance, if \( G \) is simply connected, then \( \Delta^{\text{pure}+} = \emptyset \).

\end{definition}

Since the path subgroupoid is constructible by its definition, it is effective—just as noted in~\cite{gorelik2022root}—to consider the corresponding edge-colored graph, as we do below.

\begin{definition}\label{RBG}

For a path simply connected Cartan graph \( G \), we define the edge-colored graph \( RB(G) \) as follows:

\begin{itemize}
    \item \textbf{Underlying graph}: The underlying graph of \( G \), with loops removed.

    \item \textbf{Color set \( C \)}: For a fixed \( x \in V \), 
    \[
    C = \Delta^{x+} - \Delta^{\text{pure+}}
    \]

    \item \textbf{Coloring}: Replace each edge between \( z \) and \( y \) colored \( i \) with an edge colored by a unique \( {O} \in C \) such that \( {O}_{z} \in \{\pm \alpha_{i}^{z}\} \) (see \cref{lem:root_bij}).
\end{itemize}
    
\end{definition}

\begin{theorem}\label{A2main}
    \( RB(G) \) of a path simply connected Cartan graph \( G \) is a rainbow boomerang graph.
\end{theorem}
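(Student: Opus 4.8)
The plan is to verify the defining property of a rainbow boomerang graph for $RB(G)$: a walk is shortest if and only if it is rainbow. Since $RB(G)$ is by construction properly colored (the recoloring only renames the colors $i \in I$ along the edges at each vertex, and these were already distinct by \textbf{(CG1)}) and connected, it suffices to establish the equivalence. One direction — rainbow implies shortest — is the easier one; the other — shortest implies rainbow — is where the real work lies. The key dictionary is: a walk $x = z_0 \, i_1 \, z_1 \, i_2 \cdots i_k \, z_k$ in the underlying exchange graph $E(G)$ with the colors $i_t$ recolored to orbits $O_t \in C$ determines the morphism $w = s_{i_k} \cdots s_{i_1} \operatorname{id}_x \in \operatorname{Hom}_{W(G)}(x, z_k)$, and because $G$ is path simply connected, the walk avoiding immediate backtracking (i.e. a path, or more generally a walk with $r_{i_{s+1}} \cdots r_{i_1} x \neq r_{i_s}\cdots r_{i_1} x$) lands in $P(G)$, so the colors $O_1, \dots, O_k$ are exactly the orbits $\operatorname{orb}(\alpha_{i_t}^{z_{t-1}})$ and $w$ sends $R^{z_k}$ to $R^x$ in the groupoid-theoretic sense.

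First I would translate $N(w)$ from \cref{poid_lwnw} into a statement about colors. For a path $W$ from $x$ to $y$ realizing $w$, I claim the multiset of colors along $W$, read as orbits in $\Delta$, records precisely the positive roots $\alpha \in R^{y+}$ with $w\alpha \in -R^{x+}$, each counted once, when the path is reduced (shortest as a walk). Concretely: walking $W$ backwards and applying the reflections $s_{i_t}$ one at a time, the standard argument (as in the proof of \cref{g.lwnw} for Coxeter groups, and its groupoid analogue \cref{lem:root_bij}) shows that the roots which get flipped from positive to negative are in bijection with the edges of $W$, and the orbit of the $t$-th flipped root is exactly the color $O_t$ of the $t$-th edge. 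Hence $l(w) = N(w)$ becomes: \emph{the length of a shortest walk from $x$ to $y$ equals the number of distinct colors that must appear on any walk from $x$ to $y$ realizing $w$}, which forces a shortest walk to use each of its colors exactly once — i.e. to be rainbow. Conversely, if a walk $W$ from $x$ to $y$ is rainbow with $k$ edges, then the $k$ colors it uses are $k$ distinct orbits each contributing to $N(w)$ (here one uses that along a rainbow walk no root-flip is undone), so $k \leq N(w) = l(w)$, forcing $W$ to be shortest.

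The main obstacle I anticipate is the careful bookkeeping in the bijection between edges of a walk and sign-changed roots, and in particular controlling what happens when a color repeats: I must show that a non-rainbow walk is genuinely non-shortest, which amounts to showing that if the same orbit $O$ labels two edges of $W$ then two sign changes cancel, so $N(w) < l(W)$, hence $l(w) \le N(w) < l(W)$ and $W$ was not shortest. This cancellation is exactly where path simple connectedness is essential — it guarantees $O \cap R^z$ is a single root at each vertex $z$, so "the same color" really does mean "the same root up to the groupoid action," and the two contributions to $N(w)$ coincide and annihilate. Once this is pinned down, the theorem follows by combining it with \cref{poid_lwnw}; I would also remark that \cref{A2main} could alternatively be deduced by exhibiting $RB(G)$ as a connected sub-edge-colored graph of a hypercube and invoking \cref{ko}, but the direct route via \cref{poid_lwnw} is cleaner and makes the role of the Weyl-groupoid exchange property transparent.
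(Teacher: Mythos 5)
Your sketch correctly identifies the key ingredient (\cref{poid_lwnw}) and the root-flipping bookkeeping that translates it into the rainbow/shortest dichotomy, but it only covers the case where \( G \) is simply connected. The gap is this: \( l(w) \) in \cref{poid_lwnw} is the minimal length of a decomposition of \( w \) in the full (semi) Weyl groupoid \( W(G) \), which is allowed to pass through loop edges of \( E(G) \), i.e.\ steps \( s_{i_t}^z \) at vertices with \( r_{i_t}z=z \). A walk in \( RB(G) \) has the loops removed and hence corresponds to a decomposition lying in the path subgroupoid \( P(G) \), and a connected Cartan graph is loopless precisely when it is simply connected. So for a merely path-simply-connected \( G \) (e.g.\ the \( \mathfrak{gl}(2|1) \) or \( D(2,1;\alpha) \) Cartan graphs, each of which has loops because it is \( \#I \)-regular) the two notions of length need not coincide a priori. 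Your ``shortest \( \Rightarrow \) rainbow'' step concludes from \( l(w)\le N(w)<l(W) \) that ``\( W \) was not shortest,'' but this only produces a shorter decomposition in \( W(G) \), not a shorter walk in \( RB(G) \). Your orbit-counting shows that every \( RB(G) \)-walk from \( x \) to \( y \) has length \( \ge N(w) \) and that a rainbow one has length exactly \( N(w) \), but it does not by itself establish that a rainbow (equivalently, length-\( N(w) \)) walk exists in \( RB(G) \); nothing in the sketch shows that the unique \( w\in\operatorname{Hom}_{P(G)}(x,y) \) admits a \( P(G) \)-decomposition of length \( N(w) \). A minor additional slip: \( P(G) \) is generated by words in which no step is a loop, which is stronger than the condition ``avoiding immediate backtracking'' you use to describe it.

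The paper handles the simply connected case essentially as you propose (there \( P(G)=W(G) \) and the distinction disappears), and then reduces the general case to it by passing to the simply connected cover \( SC(G) \): \( RB(SC(G)) \) is a rainbow boomerang graph by the first step, and removing from it the edges colored by \( \Delta^{\text{pure}+} \) yields a disjoint union of copies of \( RB(G) \), so the property passes down via \cref{wwwww}. Some lifting argument of this kind, or a direct proof that \( l_{P(G)}(w)=l_{W(G)}(w) \) for \( w\in P(G) \), is what your sketch still needs to close the ``shortest \( \Rightarrow \) rainbow'' direction when \( G \) is not simply connected.
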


\begin{proof}
In the simply connected case, this follows immediately from \cref{poid_lwnw}.

In general, if \( G \) is connected and path-simply connected, then under the natural identification of \( SC(G) \) with the root system of \( G \), the edge-colored graph obtained from \( RB(SC(G)) \) by removing edges with colors belonging to \( \Delta^{\text{pure+}} \) is a disjoint union of copies of \( RB(G) \), with the number of components equal to the order of the group of automorphisms of an object of \( W(G) \) by \cref{prop:simconnec}. Consequently, \( RB(G) \) is a  rainbow booerang graph by \cref{wwwww}.

\end{proof}

\begin{example}
The symmetric group on four elements can be viewed as the Weyl groupoid of \( \mathfrak{gl}(2|2) \), formed by combining both even and odd reflections. By removing the edges corresponding to even reflections from the Cayley graph of the symmetric group, we obtain a disjoint union of four finite Young lattices \( L(2,2) \). This is consistent with the fact that the Weyl group of \( \mathfrak{gl}(2|2) \) is isomorphic to \( \mathbb{Z}/2\mathbb{Z} \times \mathbb{Z}/2\mathbb{Z} \).

\begin{minipage}{0.45\textwidth}
    \centering
 \begin{tikzpicture}[scale=0.7]
    % Center square
    \node[draw, circle, fill=black, inner sep=1.5pt] (A0) at (0,0.47) {};
    \node[draw, circle, fill=black, inner sep=1.5pt] (B0) at (0.47,0) {};
    \node[draw, circle, fill=black, inner sep=1.5pt] (C0) at (0,-0.47) {};
    \node[draw, circle, fill=black, inner sep=1.5pt] (D0) at (-0.47,0) {};
    \draw (A0) -- (B0) -- (C0) -- (D0) -- (A0);

    % Top square
    \node[draw, circle, fill=black, inner sep=1.5pt] (A1) at (0,1.97) {};
    \node[draw, circle, fill=black, inner sep=1.5pt] (B1) at (0.47,1.5) {};
    \node[draw, circle, fill=black, inner sep=1.5pt] (C1) at (0,1.03) {};
    \node[draw, circle, fill=black, inner sep=1.5pt] (D1) at (-0.47,1.5) {};
    \draw (A1) -- (B1) -- (C1) -- (D1) -- (A1);

    % Bottom square
    \node[draw, circle, fill=black, inner sep=1.5pt] (A2) at (0,-1.03) {};
    \node[draw, circle, fill=black, inner sep=1.5pt] (B2) at (0.47,-1.5) {};
    \node[draw, circle, fill=black, inner sep=1.5pt] (C2) at (0,-1.97) {};
    \node[draw, circle, fill=black, inner sep=1.5pt] (D2) at (-0.47,-1.5) {};
    \draw (A2) -- (B2) -- (C2) -- (D2) -- (A2);

    % Right square
    \node[draw, circle, fill=black, inner sep=1.5pt] (A3) at (1.5,0.47) {};
    \node[draw, circle, fill=black, inner sep=1.5pt] (B3) at (1.97,0) {};
    \node[draw, circle, fill=black, inner sep=1.5pt] (C3) at (1.5,-0.47) {};
    \node[draw, circle, fill=black, inner sep=1.5pt] (D3) at (1.03,0) {};
    \draw (A3) -- (B3) -- (C3) -- (D3) -- (A3);

    % Left square
    \node[draw, circle, fill=black, inner sep=1.5pt] (A4) at (-1.5,0.47) {};
    \node[draw, circle, fill=black, inner sep=1.5pt] (B4) at (-1.03,0) {};
    \node[draw, circle, fill=black, inner sep=1.5pt] (C4) at (-1.5,-0.47) {};
    \node[draw, circle, fill=black, inner sep=1.5pt] (D4) at (-1.97,0) {};
    \draw (A4) -- (B4) -- (C4) -- (D4) -- (A4);

    % Additional edges as requested
    \draw (A0) -- (C1);
    \draw (B0) -- (D3);
    \draw (C0) -- (A2);
    \draw (D0) -- (B4);

    % New external nodes
    \node[draw, circle, fill=black, inner sep=1.5pt] (NA1) at (0,2.5) {}; % Above A1
    \node[draw, circle, fill=black, inner sep=1.5pt] (NC2) at (0,-2.5) {}; % Below C2
    \node[draw, circle, fill=black, inner sep=1.5pt] (NB3) at (2.5,0) {}; % Right of B3
    \node[draw, circle, fill=black, inner sep=1.5pt] (ND4) at (-2.5,0) {}; % Left of D4

% Connect new nodes
    \draw (A1) -- (NA1);
    \draw (C2) -- (NC2);
    \draw (B3) -- (NB3);
    \draw (D4) -- (ND4);

    % Connect new external nodes
    \draw (NA1) -- (NB3);
    \draw (NB3) -- (NC2);
    \draw (NC2) -- (ND4);
    \draw (ND4) -- (NA1);

    % Connect additional requested edges
    \draw (B1) -- (A3);
    \draw (C3) -- (B2);
    \draw (D2) -- (C4);
    \draw (A4) -- (D1);

\end{tikzpicture}
\end{minipage}
\hfill
\begin{minipage}{0.45\textwidth}
    \centering
\begin{tikzpicture}[scale=0.7]
    % Center square
    \node[draw, circle, fill=black, inner sep=1.5pt] (A0) at (0,0.47) {};
    \node[draw, circle, fill=black, inner sep=1.5pt] (B0) at (0.47,0) {};
    \node[draw, circle, fill=black, inner sep=1.5pt] (C0) at (0,-0.47) {};
    \node[draw, circle, fill=black, inner sep=1.5pt] (D0) at (-0.47,0) {};

    % Top square
    \node[draw, circle, fill=black, inner sep=1.5pt] (A1) at (0,1.97) {};
    \node[draw, circle, fill=black, inner sep=1.5pt] (B1) at (0.47,1.5) {};
    \node[draw, circle, fill=black, inner sep=1.5pt] (C1) at (0,1.03) {};
    \node[draw, circle, fill=black, inner sep=1.5pt] (D1) at (-0.47,1.5) {};
    \draw (A1) -- (B1) -- (C1) -- (D1) -- (A1);

    % Bottom square
    \node[draw, circle, fill=black, inner sep=1.5pt] (A2) at (0,-1.03) {};
    \node[draw, circle, fill=black, inner sep=1.5pt] (B2) at (0.47,-1.5) {};
    \node[draw, circle, fill=black, inner sep=1.5pt] (C2) at (0,-1.97) {};
    \node[draw, circle, fill=black, inner sep=1.5pt] (D2) at (-0.47,-1.5) {};
    \draw (A2) -- (B2) -- (C2) -- (D2) -- (A2);

    % Right square
    \node[draw, circle, fill=black, inner sep=1.5pt] (A3) at (1.5,0.47) {};
    \node[draw, circle, fill=black, inner sep=1.5pt] (B3) at (1.97,0) {};
    \node[draw, circle, fill=black, inner sep=1.5pt] (C3) at (1.5,-0.47) {};
    \node[draw, circle, fill=black, inner sep=1.5pt] (D3) at (1.03,0) {};
    \draw (A3) -- (B3) -- (C3) -- (D3) -- (A3);

    % Left square
    \node[draw, circle, fill=black, inner sep=1.5pt] (A4) at (-1.5,0.47) {};
    \node[draw, circle, fill=black, inner sep=1.5pt] (B4) at (-1.03,0) {};
    \node[draw, circle, fill=black, inner sep=1.5pt] (C4) at (-1.5,-0.47) {};
    \node[draw, circle, fill=black, inner sep=1.5pt] (D4) at (-1.97,0) {};
    \draw (A4) -- (B4) -- (C4) -- (D4) -- (A4);

    % Additional edges as requested
    \draw (A0) -- (C1);
    \draw (B0) -- (D3);
    \draw (C0) -- (A2);
    \draw (D0) -- (B4);

    % New external nodes
    \node[draw, circle, fill=black, inner sep=1.5pt] (NA1) at (0,2.5) {}; % Above A1
    \node[draw, circle, fill=black, inner sep=1.5pt] (NC2) at (0,-2.5) {}; % Below C2
    \node[draw, circle, fill=black, inner sep=1.5pt] (NB3) at (2.5,0) {}; % Right of B3
    \node[draw, circle, fill=black, inner sep=1.5pt] (ND4) at (-2.5,0) {}; % Left of D4

% Connect new nodes
    \draw (A1) -- (NA1);
    \draw (C2) -- (NC2);
    \draw (B3) -- (NB3);
    \draw (D4) -- (ND4);

\end{tikzpicture}
\end{minipage}
\end{example}

\begin{example}
Let \( G \) be a finite Cartan graph of rank 2. Then, \( G \) is multiedge-free (if \( \#V \neq 2 \), this is the case) if and only if \( G \) is path simply connected. In this case, \( RB(G) \) is one of the following:
\begin{itemize}
    \item a line segment ;
    \item a cycle graph \( C_{2n} \) of length \( 2n \) (\( n > 0 \)) .
\end{itemize}
\end{example}

\section{Exchange property of odd reflections}

Throughout the following, let \( \mathfrak{g} \) denote a regular symmetrizable Kac–Moody Lie superalgebra \cite{bonfert2024weyl,serganova2011kac}. In order to maintain consistency with the formalism of \cite{bonfert2024weyl}, we actively adopt the terminology of Borel subalgebras.

We denote the even and odd parts of \( \mathfrak{g} \) as \( \mathfrak{g}_{\overline{0}} \) and \( \mathfrak{g}_{\overline{1}} \), respectively.

\begin{definition}[\cite{serganova2017representations}]\label{3.1delta}
A Cartan subalgebra of the Lie algebra \( \mathfrak{g}_{\overline{0}} \) is denoted by \( \mathfrak{h} \).

The root space \( \mathfrak{g}_\alpha \) associated with \( \alpha \in \mathfrak{h}^* \) is defined as 
\(
\mathfrak{g}_\alpha := \{ x \in \mathfrak{g} \mid [h, x] = \alpha(h)x \, \text{for all } h \in \mathfrak{h} \}.
\)

The set of roots \( \Delta \) is defined as 
\(
\Delta := \{ \alpha \in \mathfrak{h}^* \mid \mathfrak{g}_\alpha \neq 0 \} \setminus \{0\}.
\)
Each \( \mathfrak{g}_\alpha \) is either purely even or purely odd and is one-dimensional. Therefore, the notions of even roots and odd roots are well defined. The sets of all even roots, even positive roots, odd roots and odd isotropic roots are denoted by \( \Delta_{\overline{0}} \), \( \Delta_{\overline{0}}^+ \)), \( \Delta_{\overline{1}} \) and \( \Delta_{\otimes} \), respectively.

\end{definition}

\begin{definition}[\cite{musson2012lie,cheng2012dualities}]
We fix a Borel subalgebra \( \mathfrak{b}_{\overline{0}} \) of \( \mathfrak{g}_{\overline{0}} \). The set of all Borel subalgebras \( \mathfrak{b} \) of \( \mathfrak{g} \) that contain \( \mathfrak{b}_{\overline{0}} \) is denoted by \( \mathfrak{B(g)} \).

 The sets of positive roots, odd positive roots, and odd isotropic positive roots corresponding to \( \mathfrak{b} \) are denoted by \( \Delta^{\mathfrak{b}+} \), \( \Delta_{\overline{1}}^{\mathfrak{b}+} \), and \( \Delta_{\otimes}^{\mathfrak{b}+} \), respectively. The set of simple roots (basis) corresponding to \( \Delta^{\mathfrak{b}+} \) is denoted by \( \Pi^{\mathfrak{b}} \). We define \( \Pi_{\otimes}^{\mathfrak{b}} :=  \Pi^{\mathfrak{b}} \cap \Delta_{\otimes} \).
We define
\[
\Delta^{\operatorname{pure+}} := \bigcap_{\mathfrak{b} \in \mathfrak{B(g)}} \Delta^{\mathfrak{b}+},
\]
\[
\Delta_{\otimes}^{\operatorname{pure+}} := \bigcap_{\mathfrak{b} \in \mathfrak{B(g)}} \Delta_{\otimes}^{\mathfrak{b}+} = \Delta^{\operatorname{pure+}} \cap \Delta_{\otimes}.
\]
\end{definition}

\begin{theorem}[Odd reflection \cite{musson2012lie} 3.5]\label{2.3.oddref} 
    For \( \alpha \in \Pi_{\otimes}^{\mathfrak{b}} \), define \( r^{\mathfrak{b}}_{\alpha} \in \operatorname{Map}(\Pi^{\mathfrak{b}}, \Delta )\) by
    \[
        r^{\mathfrak{b}}_{\alpha}(\beta) = 
        \begin{cases}
            -\alpha & (\beta = \alpha), \\
            \alpha + \beta & (\alpha + \beta \in \Delta), \\
            \beta & (\text{otherwise}).
        \end{cases}
    \]
    for \( \beta \in \Pi^{\mathfrak{b}} \).
(When there is no risk of confusion, \( r_\alpha^\mathfrak{b} \) is abbreviated as \( r_\alpha \).)
   A Borel subalgebra \( r_{\alpha} \mathfrak{b} \in \mathfrak{B(g)} \) exists, with the corresponding basis given by 
\[
\Pi^{r_{\alpha} \mathfrak{b}} := \{ r^{\mathfrak{b}}_{\alpha}(\beta) \}_{\beta \in \Pi^{\mathfrak{b}}}.
\]

\end{theorem}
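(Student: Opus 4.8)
The plan is to realize the prescribed basis as the set of simple roots of the Borel subalgebra obtained from \(\mathfrak{b}\) by ``flipping'' the single simple root \(\alpha\). Concretely, set
\[
\Delta' := \bigl(\Delta^{\mathfrak{b}+}\setminus\{\alpha\}\bigr)\cup\{-\alpha\},\qquad \mathfrak{b}':=\mathfrak{h}\oplus\bigoplus_{\gamma\in\Delta'}\mathfrak{g}_{\gamma}.
\]
I would then prove the theorem in two steps: (i) \(\mathfrak{b}'\) is a Borel subalgebra belonging to \(\mathfrak{B(g)}\); (ii) its basis is exactly \(\Pi^{\mathfrak{b}'}=\{r^{\mathfrak{b}}_{\alpha}(\beta)\}_{\beta\in\Pi^{\mathfrak{b}}}\). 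Setting \(r_{\alpha}\mathfrak{b}:=\mathfrak{b}'\) then gives the claim.

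For (i), the essential point is that \(\Delta'\) is closed under addition inside \(\Delta\). This is a short case analysis resting on two structural facts: \(\alpha\) is indecomposable in \(\Delta^{\mathfrak{b}+}\) (hence is not a sum of two elements of \(\Delta^{\mathfrak{b}+}\); this also forces \(\delta-\alpha\in\Delta^{\mathfrak{b}+}\) whenever \(\delta\in\Delta^{\mathfrak{b}+}\setminus\{\alpha\}\) and \(\delta-\alpha\in\Delta\), since otherwise \(\alpha=\delta+(\alpha-\delta)\) would split \(\alpha\)), and \(2\alpha\notin\Delta\) by isotropy. Closedness shows \(\mathfrak{b}'\) is a subalgebra; since \(\alpha\) is odd, the even part of \(\Delta'\) is still \(\Delta_{\overline{0}}^{+}\), so \(\mathfrak{b}'\supseteq\mathfrak{b}_{\overline{0}}\); and \(\Delta'\cap(-\Delta')=\emptyset\) while \(\Delta'\cup(-\Delta')=\Delta\). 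By the characterization of Borel subalgebras containing \(\mathfrak{b}_{\overline{0}}\) in \cite{bonfert2024weyl}, this suffices to conclude \(\mathfrak{b}'\in\mathfrak{B(g)}\).

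For (ii), the simple roots of \(\mathfrak{b}'\) are the indecomposable elements of \(\Delta'\). I would first check that each \(r^{\mathfrak{b}}_{\alpha}(\beta)\) is indecomposable in \(\Delta'\): for \(-\alpha=r^{\mathfrak{b}}_{\alpha}(\alpha)\) and for a simple root \(\beta\) with \(\alpha+\beta\notin\Delta\) this is immediate; for \(\beta\) with \(\alpha+\beta\in\Delta\), a height count rules out a splitting into two elements of \(\Delta^{\mathfrak{b}+}\setminus\{\alpha\}\), and the only remaining possibility \(\alpha+\beta=(-\alpha)+(2\alpha+\beta)\) is excluded by the isotropic-string property \(2\alpha+\beta\notin\Delta\) (\cite{serganova2011kac,bonfert2024weyl}). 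Thus \(\{r^{\mathfrak{b}}_{\alpha}(\beta)\}\subseteq\Pi^{\mathfrak{b}'}\). For the reverse inclusion, ordering \(\alpha\) first one sees that the transition matrix from \(\Pi^{\mathfrak{b}}\) to \(\{r^{\mathfrak{b}}_{\alpha}(\beta)\}\) is block-triangular with \(-1\) and an identity block on the diagonal, hence has determinant \(\pm1\); so \(\{r^{\mathfrak{b}}_{\alpha}(\beta)\}\) is a \(\mathbb{Z}\)-basis of \(Q:=\mathbb{Z}\Pi^{\mathfrak{b}}\). Since \(\Delta'\subseteq\Delta\subseteq Q\), also \(\mathbb{Z}\Pi^{\mathfrak{b}'}\subseteq Q\), and a linearly independent subset of the \(\mathbb{Z}\)-basis \(\Pi^{\mathfrak{b}'}\) whose \(\mathbb{Z}\)-span is all of \(\mathbb{Z}\Pi^{\mathfrak{b}'}\) must coincide with \(\Pi^{\mathfrak{b}'}\). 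Therefore \(\Pi^{\mathfrak{b}'}=\{r^{\mathfrak{b}}_{\alpha}(\beta)\}_{\beta\in\Pi^{\mathfrak{b}}}\).

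I expect the main obstacle to be not the combinatorics above—which is essentially the classical argument of \cite[Theorem 3.5]{musson2012lie}—but rather isolating and citing the correct structural input in the general regular symmetrizable Kac--Moody super setting: that an isotropic odd \(\alpha\) with \(\alpha+\beta\in\Delta\) forces \(2\alpha+\beta\notin\Delta\), and that flipping one simple root of a genuine positive system again yields a genuine positive system (i.e.\ a Borel subalgebra, not merely a maximal solvable subalgebra). Once these are in hand from \cite{bonfert2024weyl,serganova2011kac}, the transport of the argument is purely formal.
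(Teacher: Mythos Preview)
The paper does not give its own proof of this theorem: it is stated with a citation to \cite[3.5]{musson2012lie} and used as a black box. So there is nothing to compare against, strictly speaking. Your proposal is the standard argument (essentially the one in Musson), and the steps you outline are correct in the basic classical case; the structural inputs you isolate at the end---that an isotropic simple \(\alpha\) with \(\alpha+\beta\in\Delta\) forces \(2\alpha+\beta\notin\Delta\), and that flipping a single simple root of a positive system again yields the positive system of a Borel---are exactly what has to be supplied from \cite{serganova2011kac,bonfert2024weyl} to transport the argument to the regular symmetrizable Kac--Moody setting used here.

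One small cleanup in step~(ii): your ``reverse inclusion'' paragraph is more complicated than it needs to be. Once you know that \(\{r^{\mathfrak{b}}_{\alpha}(\beta)\}\) is a \(\mathbb{Z}\)-basis of \(Q\) and that \(\{r^{\mathfrak{b}}_{\alpha}(\beta)\}\subseteq\Pi^{\mathfrak{b}'}\), any hypothetical extra element of \(\Pi^{\mathfrak{b}'}\) lies in \(Q\) and hence in the \(\mathbb{Z}\)-span of the subset \(\{r^{\mathfrak{b}}_{\alpha}(\beta)\}\subseteq\Pi^{\mathfrak{b}'}\), contradicting linear independence of \(\Pi^{\mathfrak{b}'}\). (Equivalently, both sets have cardinality equal to the rank.) The intermediate sentence about ``a linearly independent subset of the \(\mathbb{Z}\)-basis \(\Pi^{\mathfrak{b}'}\) whose \(\mathbb{Z}\)-span is all of \(\mathbb{Z}\Pi^{\mathfrak{b}'}\)'' is slightly garbled; replacing it with the one-line argument above makes the logic transparent.
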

The linear transformation of \( \mathfrak{h}^* \) induced by an odd reflection does not necessarily map a Borel subalgebra to another Borel subalgebra.

The following is well-known:
\begin{proposition}[\cite{musson2012lie,cheng2012dualities}]\label{2.3.odd_kyoyaku}
Each pair of elements \( \mathfrak{b},\mathfrak{b'} \in \mathfrak{B(g)} \) due to transferred to each other by a finite number of odd reflections.
\end{proposition}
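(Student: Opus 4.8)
The plan is to argue by induction on the quantity
\[
d(\mathfrak{b},\mathfrak{b}') := \#\, N(\mathfrak{b},\mathfrak{b}'), \qquad N(\mathfrak{b},\mathfrak{b}') := \Delta^{\mathfrak{b}+} \cap \bigl(-\Delta^{\mathfrak{b}'+}\bigr),
\]
the number of roots that are positive for \( \mathfrak{b} \) and negative for \( \mathfrak{b}' \). Before the induction I would record two preliminary facts. First, \( N(\mathfrak{b},\mathfrak{b}') \) consists entirely of isotropic odd roots: since every Borel in \( \mathfrak{B(g)} \) contains the fixed even Borel \( \mathfrak{b}_{\overline{0}} \), one has \( \Delta^{\mathfrak{b}+}\cap\Delta_{\overline{0}} = \Delta_{\overline{0}}^{+} = \Delta^{\mathfrak{b}'+}\cap\Delta_{\overline{0}} \), so no even root can lie in \( N(\mathfrak{b},\mathfrak{b}') \); and if \( \gamma \in N(\mathfrak{b},\mathfrak{b}') \) were a non-isotropic odd root, then \( 2\gamma \in \Delta_{\overline{0}} \) would be forced to be positive for \( \mathfrak{b} \) (being \( \gamma+\gamma \)) and negative for \( \mathfrak{b}' \) (being \( (-\gamma)+(-\gamma) \)), contradicting the previous equality. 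Second, \( N(\mathfrak{b},\mathfrak{b}') = \emptyset \) forces \( \mathfrak{b} = \mathfrak{b}' \): emptiness gives \( \Delta^{\mathfrak{b}+} \subseteq \Delta^{\mathfrak{b}'+} \), and since \( \Delta = \Delta^{\mathfrak{b}+} \sqcup (-\Delta^{\mathfrak{b}+}) = \Delta^{\mathfrak{b}'+} \sqcup (-\Delta^{\mathfrak{b}'+}) \), this inclusion upgrades to \( \Delta^{\mathfrak{b}+} = \Delta^{\mathfrak{b}'+} \), whence \( \mathfrak{b} = \mathfrak{b}' \).

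For the inductive step, assume \( \mathfrak{b} \neq \mathfrak{b}' \), so \( N(\mathfrak{b},\mathfrak{b}') \neq \emptyset \). First I would show that the obstruction is already visible at a simple root, that is \( \Pi^{\mathfrak{b}} \cap N(\mathfrak{b},\mathfrak{b}') \neq \emptyset \): if not, then \( \Pi^{\mathfrak{b}} \subseteq \Delta^{\mathfrak{b}'+} \), and since every element of \( \Delta^{\mathfrak{b}+} \) is a nonnegative integer combination of \( \Pi^{\mathfrak{b}} \) while \( \Delta^{\mathfrak{b}'+} \) is closed under such combinations, one would get \( \Delta^{\mathfrak{b}+} \subseteq \Delta^{\mathfrak{b}'+} \), i.e.\ \( N(\mathfrak{b},\mathfrak{b}') = \emptyset \), a contradiction. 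Choose \( \alpha \in \Pi^{\mathfrak{b}} \cap N(\mathfrak{b},\mathfrak{b}') \); by the first paragraph \( \alpha \) is isotropic odd, hence \( \alpha \in \Pi_{\otimes}^{\mathfrak{b}} \), and \cref{2.3.oddref} produces a Borel \( \mathfrak{b}_{1} := r_{\alpha}\mathfrak{b} \in \mathfrak{B(g)} \). Using the standard fact that an isotropic odd reflection changes the set of positive roots by exactly interchanging \( \alpha \) and \( -\alpha \), that is \( \Delta^{\mathfrak{b}_{1}+} = (\Delta^{\mathfrak{b}+}\setminus\{\alpha\})\cup\{-\alpha\} \), together with \( \alpha \notin \Delta^{\mathfrak{b}'+} \) (because \( -\alpha \in \Delta^{\mathfrak{b}'+} \)), one computes \( N(\mathfrak{b}_{1},\mathfrak{b}') = N(\mathfrak{b},\mathfrak{b}')\setminus\{\alpha\} \), so \( d(\mathfrak{b}_{1},\mathfrak{b}') = d(\mathfrak{b},\mathfrak{b}') - 1 \). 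By the inductive hypothesis \( \mathfrak{b}_{1} \) and \( \mathfrak{b}' \) are joined by finitely many odd reflections, and prepending \( r_{\alpha} \) joins \( \mathfrak{b} \) to \( \mathfrak{b}' \).

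The step I expect to be the main obstacle — and the only place where passing from basic Lie superalgebras (where the statement is \cite[3.5]{musson2012lie} and \cite{cheng2012dualities}) to regular symmetrizable Kac--Moody Lie superalgebras requires genuine input — is the \emph{finiteness} of \( d(\mathfrak{b},\mathfrak{b}') \), which is what makes the induction terminate. For finite-dimensional \( \mathfrak{g} \) this is automatic because \( \Delta \) is finite; in the general case it has to be extracted from the structure theory of the root systems of such superalgebras (cf.\ \cite{serganova2011kac,bonfert2024weyl}), namely from the fact that any two Borel subalgebras in \( \mathfrak{B(g)} \) differ in only a finite set of isotropic odd roots. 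Everything else in the argument is the purely combinatorial bookkeeping above; one could equivalently organize it by viewing \( \mathfrak{B(g)} \) as an edge-colored graph whose edges are the odd reflections and running the same descent on \( d \), but in either formulation the finiteness of the number of required reflections is the essential hypothesis.
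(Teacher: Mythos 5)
The paper does not include a proof of this proposition: it is stated as a known result, with citations to Musson's book and Cheng--Wang, and no proof environment follows. So there is no ``paper's own proof'' to compare against.

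Taken on its own merits, your argument is the standard one and is essentially correct. The preliminary reduction to isotropic odd roots (using that all Borels in \( \mathfrak{B(g)} \) share the fixed even Borel, and that a non-isotropic odd \( \gamma \) would force \( 2\gamma \) into both an even-positive and an even-negative system) is fine, as is the descending induction on \( d(\mathfrak{b},\mathfrak{b}') \) using the fact that an isotropic odd reflection at a simple root \( \alpha \) changes the positive system only by swapping \( \alpha \leftrightarrow -\alpha \). One phrase is loose and you should tighten it: \( \Delta^{\mathfrak{b}'+} \) is not literally ``closed under nonnegative integer combinations'' (a sum of two positive roots need not be a root); the intended and correct statement is that any \emph{root} expressible as a nonnegative integer combination of elements of \( \Delta^{\mathfrak{b}'+} \) is positive, which follows by passing to \( \Pi^{\mathfrak{b}'} \)-coordinates. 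The genuinely nontrivial input, as you correctly flag, is the finiteness of \( d(\mathfrak{b},\mathfrak{b}') \) in the regular symmetrizable Kac--Moody setting, since otherwise the induction has no base. Your proposal does not establish this, it only names it; to make the proof self-contained one would need to cite or prove that any two Borels in \( \mathfrak{B(g)} \) differ in a finite set of isotropic odd roots (this is part of the set-up in \cite{serganova2011kac} and is encoded in the Cartan-graph structure of \cite{bonfert2024weyl}, which is exactly what \cref{BNmain} later appeals to). With that reference supplied, the argument is complete.
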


\begin{definition}\label{RBgdef}
    The edge-colored graph \( OR(\mathfrak{g}) \) is defined as follows:
    \begin{itemize}
        \item \textbf{Vertex set:} \( \mathfrak{B(g)} \).
        \item \textbf{Color set:} For a fixed \( \mathfrak{b} \in \mathfrak{B(g)} \), the set \( \Delta^{\mathfrak{b}+} \setminus \Delta^{\text{pure+}} \).
        \item \textbf{Edges and colors:} An edge is drawn between two vertices if they are related by an odd reflection. The edge is assigned a color corresponding to the unique \( \alpha \in \Delta^{\mathfrak{b}+} \setminus \Delta^{\text{pure+}} \) such that \( \alpha \) belongs to the positive root system of one vertex but not the other.
    \end{itemize}
    Since the positive root systems associated with different Borel subalgebras are in one-to-one correspondence, the structure of the edge-colored graph does not depend on the choice of \( \mathfrak{b} \).
\end{definition}

\begin{definition}[{\cite{bonfert2024weyl,serganova2011kac}}] \label{orderdrootbasis}

Let \( \mathfrak{b} \in \mathfrak{B(g)} \), and consider a total ordering \( \leq \) on \( \Pi^{\mathfrak{b}} \). We call the pair \( (\mathfrak{b}, \leq) \) an \emph{ordered root basis}. This ordering is denoted by 
\[
\Pi^{(\mathfrak{b}, \leq)} = \{ \alpha_1^{(\mathfrak{b}, \leq)}, \dots, \alpha_{\theta}^{(\mathfrak{b}, \leq)} \}.
\]

For a composition of odd reflections \( r_{\beta_t} \dots r_{\beta_1} \), we define the ordered root basis 
\[
r_{\beta_t} \dots r_{\beta_1}((\mathfrak{b}, \leq))
\]
by 
\[
\alpha_j^{r_{\beta_t} \dots r_{\beta_1}(\mathfrak{b}, \leq)} := r_{\beta_t} \dots r_{\beta_1}(\alpha_j^{(\mathfrak{b}, \leq)}).
\]

In this way, the ordered root bases are mapped to each other under odd reflections.
    
\end{definition}

\begin{definition} Recall \cref{orderdrootbasis}.
Given a fixed ordered root basis \( (\overline{\mathfrak{b}}, \overline{\leq}) \), we define \( E(\mathfrak{g}) \) as an edge-colored graph with the following structure:
\begin{itemize}
    \item \textbf{Vertex set \( V \):} Each vertex \( (\mathfrak{b}, \leq) \) represents an ordered root basis obtained from \( (\overline{\mathfrak{b}}, \overline{\leq}) \) through a finite sequence of odd reflections.
    
    \item \textbf{Color set :} The total orbdered set \( I \) as \cref{orderdrootbasis}.
    
   \item \textbf{Edges:} Draw an edge of color \( i \) between vertices that are related by an odd reflection corresponding to the \( i \)-th simple root. Additionally, assign a loop of color \( i \) at a vertex if the \( i \)-th simple root is non-isotropic for that vertex.
\end{itemize}
\end{definition}

We rely on the following result (see \cite[Definition 2.10]{bonfert2024weyl}, \cite[Corollary 2.14]{heckenberger2020hopf}, or \cite{azam2015classification}).
\begin{theorem}\label{BNmain}\label{2.3.main}
    Under the above settings, for each \( (\mathfrak{b}, \leq) \in V \), there exists a unique family of generalized Cartan matrices \( \{A^{(\mathfrak{b}, \leq)}\} \), such that the vertex labeling by this family of matrices makes \( E(\mathfrak{g}) \) a finite connected Cartan graph, and for each \( (\mathfrak{b}, \leq) \in V \), there is an additive bijection
    \[
        R^{(\mathfrak{b}, \leq)+} \simeq \Delta^{\mathfrak{b}+} \setminus 2 \Delta^{\mathfrak{b}+}
    \]
    given by mapping \( \alpha_i^{(\mathfrak{b}, \leq)} \mapsto \alpha_i^{\mathfrak{b}} \).

    We denote the Cartan graph constructed above by \( G(\mathfrak{g}) \).
\end{theorem}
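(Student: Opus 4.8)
The plan is to treat the statement as a dictionary between the combinatorics of odd reflections of \(\mathfrak{g}\) and the Cartan-graph formalism: I would write down the generalized Cartan matrix attached to each ordered root basis, verify \textbf{(CG1)}--\textbf{(CG4)} by hand, read off the additive root identification, and import from \cite{bonfert2024weyl, azam2015classification, heckenberger2020hopf} --- ultimately resting on \cite{serganova2011kac} --- the two ingredients that are not purely formal, namely finiteness and the precise shape of the root bijection. In effect, the labelled graph produced below is the Cartan graph of \cite[Definition~2.10]{bonfert2024weyl}, so part of the task is simply to match our \(E(\mathfrak{g})\) with the exchange graph constructed there.

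\emph{The matrices and \textbf{(CG1)}--\textbf{(CG2)}.} For an ordered root basis \((\mathfrak{b},\leq)\) with \(\Pi^{(\mathfrak{b},\leq)}=\{\alpha_1,\dots,\alpha_\theta\}\) I would take \(A^{(\mathfrak{b},\leq)}=(a_{ij})\) to be the super Cartan integers normalized by \(a_{ii}=2\): for \(i\neq j\) with \(\alpha_i^{\mathfrak{b}}\) odd isotropic, set \(a_{ij}=-1\) if \(\alpha_i^{\mathfrak{b}}+\alpha_j^{\mathfrak{b}}\in\Delta\) and \(a_{ij}=0\) otherwise; for \(\alpha_i^{\mathfrak{b}}\) non-isotropic, set \(a_{ij}=-p_{ij}\), where \(p_{ij}\) is the length of the \(\alpha_i^{\mathfrak{b}}\)-string through \(\alpha_j^{\mathfrak{b}}\). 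By \cref{2.3.oddref}, an isotropic simple root produces an honest edge of \(E(\mathfrak{g})\) and a non-isotropic one a loop, which is exactly the edge/loop convention in the definition of \(E(\mathfrak{g})\); hence \(E(\mathfrak{g})\) is \(\#I\)-regular and properly coloured, i.e. \textbf{(CG1)} holds. For \textbf{(CG2)}: if an edge of colour \(i\) joins \((\mathfrak{b},\leq)\) to \((\mathfrak{b}',\leq')=r_{\alpha_i}(\mathfrak{b},\leq)\), the odd-reflection formula gives \(\alpha_i^{\mathfrak{b}'}=-\alpha_i^{\mathfrak{b}}\) and \(\alpha_j^{\mathfrak{b}'}\in\{\alpha_j^{\mathfrak{b}},\,\alpha_i^{\mathfrak{b}}+\alpha_j^{\mathfrak{b}}\}\), so the \(\alpha_i^{\mathfrak{b}'}\)-strings coincide with the \(\alpha_i^{\mathfrak{b}}\)-strings and the \(i\)-th rows of \(A^{(\mathfrak{b},\leq)}\) and \(A^{(\mathfrak{b}',\leq')}\) agree. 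One verifies along the way that each \(A^{(\mathfrak{b},\leq)}\) really is a generalized Cartan matrix; the equivalence \(a_{ij}=0\iff a_{ji}=0\) uses symmetrizability of \(\mathfrak{g}\), and the fact that \(a_{ij}\in\{0,-1\}\) for odd isotropic \(\alpha_i^{\mathfrak{b}}\) uses the superalgebra structure (the relevant string has length at most one, since \(2\alpha_i^{\mathfrak{b}}\notin\Delta\) and \([\mathfrak{g}_{\alpha_i^{\mathfrak{b}}},\mathfrak{g}_{\alpha_i^{\mathfrak{b}}}]=0\)).

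\emph{Root identification, \textbf{(CG3)}--\textbf{(CG4)}, uniqueness.} Define \(f^{(\mathfrak{b},\leq)}\colon(\mathbb{Z}^I)^{(\mathfrak{b},\leq)}\to\mathfrak{h}^*\) by \(\alpha_i^{(\mathfrak{b},\leq)}\mapsto\alpha_i^{\mathfrak{b}}\) and \(\mathbb{Z}\)-linear extension; it is an isomorphism onto the root lattice. The computation in the previous step yields \(f^{r_{\alpha_i}(\mathfrak{b},\leq)}\circ s_i^{(\mathfrak{b},\leq)}=f^{(\mathfrak{b},\leq)}\), so the family \(\{f^{(\mathfrak{b},\leq)}\}\) intertwines the semi-Weyl groupoid \(W(G(\mathfrak{g}))\) with the action on \(\mathfrak{h}^*\) of compositions of odd reflections and of the ordinary reflections in non-isotropic simple roots. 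Since every ordered root basis is reached from \((\overline{\mathfrak{b}},\overline{\leq})\) by odd reflections (\cref{2.3.odd_kyoyaku}), \(f^{(\mathfrak{b},\leq)}\) carries \(R^{(\mathfrak{b},\leq)}\) --- the \(W(G(\mathfrak{g}))\)-orbit of the simple roots --- bijectively onto a set of roots of \(\mathfrak{g}\) stable under negation; that this set is exactly \((\Delta^{\mathfrak{b}+}\setminus 2\Delta^{\mathfrak{b}+})\sqcup-(\Delta^{\mathfrak{b}+}\setminus 2\Delta^{\mathfrak{b}+})\), and in particular finite, is the content of \cite[Definition~2.10]{bonfert2024weyl} (see also \cite{azam2015classification, heckenberger2020hopf}) for regular symmetrizable Kac--Moody Lie superalgebras, proved there via the structure theory of \cite{serganova2011kac}; this I would quote rather than reprove. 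Granting it, \textbf{(CG3)} and the finiteness of \(R^x\) are immediate, and restricting \(f^{(\mathfrak{b},\leq)}\) to \(R^{(\mathfrak{b},\leq)+}\) gives the asserted additive bijection \(R^{(\mathfrak{b},\leq)+}\simeq\Delta^{\mathfrak{b}+}\setminus 2\Delta^{\mathfrak{b}+}\). For \textbf{(CG4)}: if \(w\in\operatorname{Hom}_{W(G(\mathfrak{g}))}(x,y)\) satisfies \(w\alpha_i^{y}\in R^{x+}\) for all \(i\), then the transformation of \(\mathfrak{h}^*\) corresponding to \(w\) carries the root basis at \(y\) into \(\Delta^{\mathfrak{b}_x+}\), hence --- the two positive systems having equal cardinality --- onto \(\Pi^{\mathfrak{b}_x}\), forcing \(\mathfrak{b}_x=\mathfrak{b}_y\) and \(w=\operatorname{id}_x\). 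Finally, uniqueness of the family \(\{A^{(\mathfrak{b},\leq)}\}\) follows from \textbf{(CG2)} and connectedness of \(E(\mathfrak{g})\): fixing the matrix at one vertex propagates, row by matching row, to all of them.

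\emph{Main obstacle.} The manipulation of strings and bases under odd reflections is routine bookkeeping. The substantive inputs are the three facts I would import rather than derive: the bound on the length of strings over an odd isotropic root, the zero-symmetry of the off-diagonal Cartan entries via symmetrizability, and --- the real crux --- the finiteness of \(R^x\) together with the exact shape of the root bijection for regular symmetrizable Kac--Moody Lie superalgebras. I expect the finiteness to be the genuine difficulty, and I would handle it entirely by citing \cite{serganova2011kac, bonfert2024weyl}.
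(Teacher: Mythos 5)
The paper does not actually supply a proof of this statement: it is introduced with ``We rely on the following result (see \cite[Definition 2.10]{bonfert2024weyl}, \cite[Corollary 2.14]{heckenberger2020hopf}, or \cite{azam2015classification})'' and then stated as imported background, with no argument given. Your proposal, by contrast, unpacks what such a proof would look like, and you explicitly defer the genuinely hard ingredients (finiteness of \(R^x\) and the precise shape of the root bijection) to the same three references. So the two treatments are compatible; you simply spelled out scaffolding the paper chose to leave to the citations. That scaffolding is mostly sound, and is a useful sanity check that the cited result really says what the theorem claims.

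One small imprecision worth flagging: the displayed intertwining identity \(f^{r_{\alpha_i}(\mathfrak{b},\leq)}\circ s_i^{(\mathfrak{b},\leq)}=f^{(\mathfrak{b},\leq)}\) is correct only when the edge of colour \(i\) is an honest (isotropic) odd reflection; at a loop of colour \(i\) (non-isotropic \(\alpha_i\)) one has \(r_i x = x\) and the identity becomes \(f^{x}\circ s_i^{x}=\sigma_{\alpha_i^{\mathfrak{b}}}\circ f^{x}\) with \(\sigma_{\alpha_i^{\mathfrak{b}}}\) the ordinary reflection on \(\mathfrak{h}^*\), which is exactly the ``ordinary reflections in non-isotropic simple roots'' appearing in your own subsequent sentence. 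So the verbal description of the intertwining is right, but the single displayed formula is not the case-free statement. Likewise the uniqueness argument ``\textbf{(CG2)} plus connectedness propagates the matrix'' is a little quick --- \textbf{(CG2)} only propagates the \(i\)-th row across edges of colour \(i\), so one also needs the root bijection at each vertex (or the string-length description you gave) to pin down every row; since you supply both, this is a presentational rather than a mathematical gap.
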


\begin{corollary}[\cite{bonfert2024weyl} Remark 2.18]\label{futogo_icchi}
    If \( (\mathfrak{b}, \leq), (\mathfrak{b}, \leq') \in V \), then \( \leq = \leq' \).
    In particular, \( V \) can be identified with \(\mathfrak{B}(\mathfrak{g})\).
\end{corollary}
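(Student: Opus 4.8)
The plan is to deduce the result from the exchange property \cref{poid_lwnw} applied to the Weyl groupoid of the Cartan graph $G(\mathfrak{g})$ provided by \cref{BNmain}. Set $x := (\mathfrak{b},\leq)$ and $y := (\mathfrak{b},\leq')$ and suppose both lie in $V$. Because $G(\mathfrak{g})$ is connected (part of \cref{BNmain}), there is a morphism $w \in \operatorname{Hom}_{W(G(\mathfrak{g}))}(x,y)$; concretely $w = s_{i_t}\cdots s_{i_1}$ is the composite of the generators read off from any chain of odd reflections joining $x$ to $y$. I will argue that $w$ is positive, hence $w = \operatorname{id}$ by \cref{poid_lwnw}, which forces $x = y$ and therefore $\leq \,=\, \leq'$.

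The essential input is the equivariance of the additive bijections of \cref{BNmain} under odd reflections. Write $\iota_{(\mathfrak{b},\leq)}\colon (\mathbb{Z}^{I})^{(\mathfrak{b},\leq)}\to\mathbb{Z}\Delta$ for the additive extension of $\alpha_i^{(\mathfrak{b},\leq)}\mapsto\alpha_i^{\mathfrak{b}}$, which by \cref{BNmain} restricts to a bijection $R^{(\mathfrak{b},\leq)+}\xrightarrow{\ \sim\ }\Delta^{\mathfrak{b}+}\setminus 2\Delta^{\mathfrak{b}+}$ and, using \textbf{(CG3)}, sends $-R^{(\mathfrak{b},\leq)+}$ onto $-(\Delta^{\mathfrak{b}+}\setminus 2\Delta^{\mathfrak{b}+})$. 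The first thing I would check is the one-step identity $\iota_{r_i(\mathfrak{b},\leq)}\circ s_i = \iota_{(\mathfrak{b},\leq)}$, valid whenever the $i$-th simple root at $(\mathfrak{b},\leq)$ is isotropic. Comparing the defining formula of $s_i$ with the odd reflection formula in \cref{2.3.oddref}, this reduces to the identity $r_{\alpha_i}(\alpha_j) = \alpha_j - a_{ij}^{(\mathfrak{b},\leq)}\alpha_i$ for all $j\in I$: for $j = i$ it is $r_{\alpha_i}(\alpha_i) = -\alpha_i$ together with $a_{ii}^{(\mathfrak{b},\leq)} = 2$, and for $j\neq i$ it records the known shape of the $i$-th row of $A^{(\mathfrak{b},\leq)}$ at an isotropic simple root, namely $a_{ij}^{(\mathfrak{b},\leq)}\in\{0,-1\}$ with $a_{ij}^{(\mathfrak{b},\leq)} = -1$ precisely when $\alpha_i+\alpha_j\in\Delta$. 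This is exactly the content that makes \cref{BNmain} work and can be quoted from \cite{bonfert2024weyl}. Composing along the chain defining $w$ then yields $\iota_y\circ w = \iota_x$.

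Now, since $x$ and $y$ have the same underlying Borel $\mathfrak{b}$, both $\iota_x$ and $\iota_y$ send the positive real roots of their respective vertices onto the single set $\Delta^{\mathfrak{b}+}\setminus 2\Delta^{\mathfrak{b}+}$. Using $\iota_y\circ w = \iota_x$ together with the fact that a groupoid morphism carries $R^x$ bijectively onto $R^y$ (both finite, as $G(\mathfrak{g})$ is finite), I get $w(R^{x+}) = R^{y+}$: indeed for $\alpha\in R^{x+}$ one has $\iota_y(w\alpha) = \iota_x(\alpha)\in\Delta^{\mathfrak{b}+}\setminus 2\Delta^{\mathfrak{b}+}$, and since $\iota_y$ separates $R^{y+}$ from $-R^{y+}$ this forces $w\alpha\in R^{y+}$. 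Thus $w$ is positive, so the count $N(w)$ in \cref{poid_lwnw} is zero, $l(w) = 0$, and $w = \operatorname{id}$; hence $x = y$ and $\leq \,=\, \leq'$. (Alternatively, \textbf{(CG4)} applies directly, $w$ carrying all simple roots of one vertex to positive roots of the other.) For the last assertion, the map $V\to\mathfrak{B}(\mathfrak{g})$, $(\mathfrak{b},\leq)\mapsto\mathfrak{b}$, is injective by the above and surjective because \cref{2.3.odd_kyoyaku} gives a chain of odd reflections from $\overline{\mathfrak{b}}$ to any prescribed $\mathfrak{b}$; performing that same chain starting from $(\overline{\mathfrak{b}},\overline{\leq})$ — each step being a reflection at a simple isotropic root, which carries a well-defined index relative to the current ordering — produces a vertex of $V$ lying over $\mathfrak{b}$. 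Hence $V$ is identified with $\mathfrak{B}(\mathfrak{g})$.

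The main obstacle I anticipate is establishing the equivariance $\iota_{r_i(\mathfrak{b},\leq)}\circ s_i = \iota_{(\mathfrak{b},\leq)}$ cleanly: one must match the three cases of the odd reflection $r_{\alpha_i}$ in \cref{2.3.oddref} against the entries of the Cartan matrix attached to the vertex by \cref{BNmain}. Once this is in place (or imported from \cite{bonfert2024weyl}), connectedness, the length–root-count equality, and the surjectivity onto $\mathfrak{B}(\mathfrak{g})$ are all routine.
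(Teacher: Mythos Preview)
Your proof is correct and is essentially the paper's argument spelled out in full detail. The paper's one-line proof ``This directly follows from (CG4) and \cref{BNmain}'' hides exactly the equivariance $\iota_{r_i(\mathfrak{b},\leq)}\circ s_i = \iota_{(\mathfrak{b},\leq)}$ that you make explicit; once that is established (and it is indeed part of how the Cartan matrices $A^{(\mathfrak{b},\leq)}$ are defined in \cite{bonfert2024weyl}), both proofs conclude by observing that any morphism $w$ between $(\mathfrak{b},\leq)$ and $(\mathfrak{b},\leq')$ carries positive roots to positive roots and hence is the identity by \textbf{(CG4)} (your use of \cref{poid_lwnw} with $N(w)=0$ is equivalent).
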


\begin{proof}
    This directly follows from (CG4) and \cref{BNmain}.
\end{proof}

\begin{theorem}\label{4.2.main}
    \( G(\mathfrak{g}) \) is path simply connected. Furthermore, \( \Delta \) in the sense of \cref{3.1delta} can be identified with the root system \( \Delta \) in the sense of \cref{A2delta}.
    
    As edge-colored graphs, \( OR(\mathfrak{g}) \) in the sense of \cref{RBgdef} is isomorphic to \( RB(G(\mathfrak{g})) \) in the sense of \cref{RBG}.

    In particular, \( OR(\mathfrak{g}) \) is a connected rainbow boomerang graph.
\end{theorem}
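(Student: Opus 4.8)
The plan is to derive the statement from \cref{BNmain} (which provides the Cartan graph \(G(\mathfrak g)\) and the additive root bijection) together with \cref{A2main}; the only substantive new input is the path simple connectedness of \(G(\mathfrak g)\). Identify \(V(G(\mathfrak g))=\mathfrak B(\mathfrak g)\) via \cref{futogo_icchi}. The computation I would carry out first records the dictionary coming from the construction of \cite{bonfert2024weyl}: at a vertex \((\mathfrak b,\le)\) the color-\(i\) edge is a loop exactly when \(\alpha_i^{\mathfrak b}\) is non-isotropic, so the loop-removed graph \(E(G(\mathfrak g))\) has vertex set \(\mathfrak B(\mathfrak g)\) and edges exactly the odd reflections; moreover the additive bijections \(\psi_z\colon R^z\xrightarrow{\ \sim\ }\Delta\setminus 2\Delta\) of \cref{BNmain} onto the reduced roots of \(\mathfrak g\) (sending \(\alpha_i^z\mapsto\alpha_i^{\mathfrak b_z}\), extended by \(-1\)) satisfy, for an odd-reflection edge \(z-z'\) of color \(i\), the intertwining \(\psi_{z'}\circ s_i^z\circ\psi_z^{-1}=\operatorname{id}_{\Delta\setminus 2\Delta}\). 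This is a case check: using \(a_{ii}^z=2\) one gets \(s_i^z(\alpha_i^z)=-\alpha_i^{z'}\), and for \(j\ne i\), writing the Serganova-type entries \(a_{ij}^z\in\{0,-1\}\) produced by \cite{bonfert2024weyl} (according as \(\alpha_i^{\mathfrak b}+\alpha_j^{\mathfrak b}\notin\Delta\) or \(\in\Delta\)), one gets \(\psi_{z'}(s_i^z(\alpha_j^z))=r_{\alpha_i^{\mathfrak b_z}}(\alpha_j^{\mathfrak b_z})+a_{ij}^z\alpha_i^{\mathfrak b_z}=\alpha_j^{\mathfrak b_z}\) by \cref{2.3.oddref}; thus \(\psi_{z'}\circ s_i^z\circ\psi_z^{-1}\) fixes the basis \(\{\alpha_j^{\mathfrak b_z}\}_{j}\) and is the identity.

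Granting the dictionary, path simple connectedness follows at once. Any \(w\in\operatorname{Hom}_{P(G)}(x,y)\) is, by \cref{A2delta}, a composition \(s_{i_k}^{z_{k-1}}\cdots s_{i_1}^{z_0}\) of odd-reflection maps along a walk \(z_0=x,\dots,z_k=y\) in \(E(G(\mathfrak g))\); conjugating each factor by the appropriate \(\psi\) and telescoping gives \(\psi_y\circ w\circ\psi_x^{-1}=\operatorname{id}\). In particular every \(w\in\operatorname{Aut}_{P(G)}(x)\) has \(\psi_x w\psi_x^{-1}=\operatorname{id}\), hence \(w=\operatorname{id}_x\); combined with \cref{2.3.odd_kyoyaku}, which gives \(\operatorname{Hom}_{P(G)}(x,y)\ne\emptyset\), this yields \(\#\operatorname{Hom}_{P(G)}(x,y)=1\). (This is also the contractibility of the groupoid of odd reflections observed in \cite{gorelik2022root}.) Since \(G(\mathfrak g)\) is a finite connected Cartan graph by \cref{BNmain}, it is a path simply connected Cartan graph.

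The remaining assertions are bookkeeping. The map \(O\mapsto\psi_x(O_x)\) is well defined because \(O\cap R^x\) is a singleton, and it is a bijection from the orbit set \(\Delta\) of \cref{A2delta} onto \(\Delta\setminus 2\Delta\subseteq\Delta\), the root system of \cref{3.1delta} (these agree up to discarding non-reduced roots); it sends \(\Delta^{x+}\) to \(\Delta^{\mathfrak b+}\) and \(\Delta^{\operatorname{pure}+}\) to \(\bigcap_{\mathfrak b'}\Delta^{\mathfrak b'+}\), so the color set of \(RB(G(\mathfrak g))\) matches that of \(OR(\mathfrak g)\) from \cref{RBgdef}. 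Both graphs have underlying graph with vertices \(\mathfrak B(\mathfrak g)\) and edges the odd reflections, and the edge \(z-z'\) given by \(r_{\alpha_i^{\mathfrak b_z}}\) receives in both the color corresponding to the reflecting root \(\alpha_i^{\mathfrak b_z}\) — for \(RB(G(\mathfrak g))\) this is by construction the orbit \(O\) with \(O_z=\alpha_i^z\in\{\pm\alpha_i^z\}\) (see \cref{RBG}). Hence \(OR(\mathfrak g)\cong RB(G(\mathfrak g))\) as edge-colored graphs, so \(OR(\mathfrak g)\) is a rainbow boomerang graph by \cref{A2main}, and it is connected by \cref{2.3.odd_kyoyaku}.

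I expect the first step — extracting from \cite{bonfert2024weyl} the precise Cartan matrices underlying \cref{BNmain} and verifying the intertwining \(\psi_{z'}\circ s_i^z\circ\psi_z^{-1}=\operatorname{id}\) in all cases, while keeping straight the loop entries and the \(2\Delta^{\mathfrak b+}\) truncation in the root bijection — to be the main point requiring care; everything afterward is formal.
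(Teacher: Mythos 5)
Your proof is correct and follows the same route the paper intends — deducing everything from \cref{BNmain} together with \cref{A2main} — but where the paper's own proof is a one-line pointer, you supply the detail it leaves to the reader: the intertwining $\psi_{z'}\circ s_i^z\circ\psi_z^{-1}=\operatorname{id}$ (equivalently, that the Bonfert--Nehme Cartan entries at an isotropic simple root are $a_{ij}\in\{0,-1\}$ exactly matching the odd-reflection formula of \cref{2.3.oddref}), and the resulting telescoping argument for path simple connectedness. You correctly identify this intertwining verification as the one point genuinely requiring care; the rest — identifying the two notions of $\Delta$ up to non-reduced roots, matching color sets and edges, and then invoking \cref{A2main} and \cref{2.3.odd_kyoyaku} — is, as you say, bookkeeping.
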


\begin{proof}
    This directly follows from \cref{BNmain} and \cref{A2main}.
\end{proof}

\begin{remark}
Here are a few remarks about the above facts:
    \begin{enumerate}
        \item By this construction, \( E(\mathfrak{g}) \) is indeed the exchange graph of \( G(\mathfrak{g}) \).
        
        \item The set \( R^{(\mathfrak{b}, \leq)} \) is a subset of \( (\mathbb{Z}^I)^{(\mathfrak{b}, \leq)} \), and \( \Delta \) is a subset of \( \mathfrak{h}^* \). We strictly distinguish between these two.

        \item The map \( s_i^{(\mathfrak{b}, \leq)} \) is a linear transformation from \( (\mathbb{Z}^I)^{(\mathfrak{b}, \leq)} \) to \( (\mathbb{Z}^I)^{r_i {(\mathfrak{b}, \leq)}} \), while the odd reflection \( r_i^{\mathfrak{b}} \) is a map from \( \Pi^{\mathfrak{b}} \) to \( \Delta \).

        \item By the above, \( G(\mathfrak{g}) \) does not depend on the choice of \( (\mathfrak{b}, \leq) \) and is uniquely determined by \( \mathfrak{g} \).
        
        \item For a vertex \( x \) in \( G(\mathfrak{g}) \), the automorphism group \( \operatorname{Aut}(x) \) can be identified with the Weyl group \( W \) (\cite[Proposition 2.15]{bonfert2024weyl}).
    \end{enumerate}
\end{remark}

\begin{example}
The general linear Lie superalgebra \( \mathfrak{gl}(m|n) \) is defined as the Lie superalgebra spanned by all \( E_{ij} \) with \( 1 \leq i, j \leq m+n \), under the supercommutator:
\[
[E_{ij}, E_{kl}] = \delta_{jk} E_{il} - (-1)^{|E_{ij}||E_{kl}|} \delta_{il} E_{kj},
\]
where \( |E_{ij}| = \overline{0} \) if \( E_{ij} \) acts within \( V_{\overline{0}} \) or \( V_{\overline{1}} \) (even), and \( |E_{ij}| = \overline{1} \) if it maps between \( V_{\overline{0}} \) and \( V_{\overline{1}} \) (odd).

The Cartan subalgebra \( \mathfrak{h} \) is given by \( \mathfrak{h} = \bigoplus k E_{ii} \).

Let \( E_{ii} \) be associated with dual basis elements \( \varepsilon_i \) for \( 1 \leq i \leq m+n \). Then we have \( \mathfrak{g}_{\varepsilon_i - \varepsilon_j} = k E_{ij} \).

Define \( \delta_i = \varepsilon_{m+i} \) for \( 1 \leq i \leq n \). The sets of roots are as follows:
\[
\Delta_{\overline{0}} = \{ \varepsilon_i - \varepsilon_j, \delta_i - \delta_j \mid i \neq j \},
\]
\[
\Delta_{\overline{1}} = \{ \varepsilon_i - \delta_j \mid 1 \leq i \leq m, \, 1 \leq j \leq n \}.
\]

For the even part \( \mathfrak{g}_{\overline{0}} = \mathfrak{gl}(m) \oplus \mathfrak{gl}(n) \), we fix the standard Borel subalgebra \( \mathfrak{b}_{\overline{0}} \) as:
\[
\mathfrak{b}_{\overline{0}} = \bigoplus_{1 \leq i \leq j \leq m} k E_{ij} \oplus \bigoplus_{m+1 \leq i \leq j \leq n} k E_{ij}.
\]

We assume that the Borel subalgebras we consider all contain \( \mathfrak{b}_{\overline{0}} \).  
Such Borel subalgebras are in bijection with Young diagrams fitting inside an \( m \times n \) rectangle, and the associated odd reflection graph is isomorphic to the finite Young lattice.  
We denote a Young diagram by expressions such as \( (4\,2^2\,1) \), and we write the empty diagram as \( \emptyset \), which corresponds precisely to the standard Borel subalgebra.  
For further details, see \cite{Hirota2025}.

According to \cite{bonfert2024weyl}, fixing the total order determined by
 identifying \( \varepsilon_i - \varepsilon_{i+1} \) with \( \operatorname{orb}(\alpha_i^{\emptyset}) \).

\( E(\mathfrak{gl}(m|n)) \) (excluding loops) is defined as an edge-colored graph with the following structure \cite{bonfert2024weyl}:
\begin{itemize}
    \item \textbf{Vertex set :} \( V = \mathfrak{B}(\mathfrak{g}) = P_{m \times n} \) (Young diagrams fitting in a m×n rectangle.)
    
    \item \textbf{Color set :} \( I = \{ 1, 2, \dots, m+n-1 \} \);
    
    \item \textbf{Edges:} There is an edge of color \( i \) between vertices \( \mathfrak{b}_1 \) and \( \mathfrak{b}_2 \) if and only if \( \mathfrak{b}_1 \) and \( \mathfrak{b}_2 \) are related by adding or subtracting a box at coordinates \( (x, y) \) in French notation, with \( x - y + m = i \).
\end{itemize}

Furthermore, the graph \( G(\mathfrak{gl}(m|n)) \) is the labeled graph obtained by labeling each vertex \( \mathfrak{b} \) of \( E(\mathfrak{gl}(m|n)) \) with \( A^{\mathfrak{b}} = A_{m+n-1} \).
\end{example}
\begin{example}
In \( \mathfrak{gl}(2|1) \), we have the following identifications:
\[
\varepsilon_1 - \varepsilon_2 \leftrightarrow \operatorname{orb}(\alpha^{\emptyset}_1) = \bigl\{ \alpha^{\emptyset}_1, \alpha^{(1)}_1 + \alpha^{(1)}_2, \alpha^{(1^2)}_2 \bigr\}.
\]
\[
\varepsilon_1 - \delta_1 \leftrightarrow \operatorname{orb}(\alpha^{\emptyset}_1 + \alpha^{\emptyset}_2) = \bigl\{ \alpha^{\emptyset}_1 + \alpha^{\emptyset}_2, \alpha^{(1)}_1, -\alpha^{(1^2)}_1 \bigr\}.
\]
\[
\varepsilon_2 - \delta_1 \leftrightarrow \operatorname{orb}(\alpha^{\emptyset}_2) = \bigl\{ \alpha^{\emptyset}_2, -\alpha^{(1)}_2, -\alpha^{(1^2)}_1 - \alpha^{(1^2)}_2 \bigr\}.
\]
\end{example}

\begin{example}

The exchange graph \( E(\mathfrak{gl}(3|2)) \) (excluding loops) is as follows. 

\hspace{-10mm}
\begin{tikzpicture}[scale=1.5]
    % Nodes with Young diagrams in French notation and adjusted coordinates
     \node (A) at (0, 0) {\(\emptyset\)};
    \node (B) at (1, 0) {\(\begin{ytableau} ~ \end{ytableau}\)};

    \node (C) at (2.5, 0.75) {\(\begin{ytableau} ~ & ~ \\ \end{ytableau}\)};
    \node (D) at (4, 0.75) {\(\begin{ytableau} ~ \\ ~ & ~ \end{ytableau}\)};
    \node (E) at (5.5, 0.75) {\(\begin{ytableau} ~ & ~ \\ ~ & ~ \end{ytableau}\)};
    \node (F) at (7, 0) {\(\begin{ytableau} ~ \\ ~ & ~ \\ ~ & ~  \end{ytableau}\)};
        \node (G) at (9, 0) {\(\begin{ytableau} ~ & ~ \\  ~ & ~  \\ ~ & ~ \end{ytableau}\)};
    \node (H) at (2.5, -0.75) {\(\begin{ytableau} ~ \\ ~ \end{ytableau}\)};
    \node (I) at (4, -0.75) {\(\begin{ytableau} ~ \\ ~ \\ ~ \end{ytableau}\)};
    \node (J) at (5.5, -0.75) {\(\begin{ytableau} ~ \\ ~ \\ ~ & ~ \end{ytableau}\)};
    
 % Edges with labels
\draw (A) -- node[above] {3} (B);
\draw (B) -- node[above] {4} (C);
\draw (C) -- node[above] {2} (D);
\draw (D) -- node[above] {3} (E);
\draw (E) -- node[above] {1} (F);
\draw (F) -- node[above] {2} (G);
\draw (B) -- node[above] {2} (H);
\draw (D) -- node[above] {4} (H);
\draw (D) -- node[above] {1} (J);
\draw (F) -- node[above] {3} (J);
\draw (H) -- node[above] {1} (I);
\draw (I) -- node[above] {4} (J);
\end{tikzpicture}.

The odd reflection graph \( OR(\mathfrak{gl}(3|2)) \) is as follows.

\begin{center}

\begin{tikzpicture}[scale=1.5]
    % Nodes with Young diagrams in French notation and adjusted coordinates
     \node (A) at (0, 0) {\(\emptyset\)};
    \node (B) at (1, 0) {\(\begin{ytableau} ~ \end{ytableau}\)};

    \node (C) at (2.5, 0.75) {\(\begin{ytableau} ~ & ~ \\ \end{ytableau}\)};
    \node (D) at (4, 0.75) {\(\begin{ytableau} ~ \\ ~ & ~ \end{ytableau}\)};
    \node (E) at (5.5, 0.75) {\(\begin{ytableau} ~ & ~ \\ ~ & ~ \end{ytableau}\)};
    \node (F) at (7, 0) {\(\begin{ytableau} ~ \\ ~ & ~ \\ ~ & ~  \end{ytableau}\)};
        \node (G) at (9, 0) {\(\begin{ytableau} ~ & ~ \\  ~ & ~  \\ ~ & ~ \end{ytableau}\)};
    \node (H) at (2.5, -0.75) {\(\begin{ytableau} ~ \\ ~ \end{ytableau}\)};
    \node (I) at (4, -0.75) {\(\begin{ytableau} ~ \\ ~ \\ ~ \end{ytableau}\)};
    \node (J) at (5.5, -0.75) {\(\begin{ytableau} ~ \\ ~ \\ ~ & ~ \end{ytableau}\)};
    
 % Edges with labels
\draw (A) -- node[above] {(1,1)} (B);
\draw (B) -- node[above] {(2,1)} (C);
\draw (C) -- node[above] {(1,2)} (D);
\draw (D) -- node[above] {(2,2)} (E);
\draw (E) -- node[above] {(1,3)} (F);
\draw (F) -- node[above] {(2,3)} (G);
\draw (B) -- node[above] {(1,2)} (H);
\draw (D) -- node[above] {(2,1)} (H);
\draw (D) -- node[above] {(1,3)} (J);
\draw (F) -- node[above] {(2,2)} (J);
\draw (H) -- node[above] {(1,3)} (I);
\draw (I) -- node[above] {(2,1)} (J);
\end{tikzpicture}.
\end{center}

\end{example}

 \begin{example}\label{2.3.d21}

Let \( \mathfrak{g} = D(2, 1; \alpha) \). See \cite{cheng2019character} for more information on this type of Lie superalgebra.

The vector space \( \mathfrak{h}^* \) has an basis \( \{ \delta, \varepsilon_1, \varepsilon_2 \} \).

The sets of roots are as follows:
\[
\Delta_{\overline{0}} = \{ \pm 2 \delta, \pm 2 \varepsilon_1, \pm 2 \varepsilon_2 \}
\]
\[
\Delta_{\overline{1}} = \Delta_{\otimes} = \{ \pm (\delta - \varepsilon_1 - \varepsilon_2), \pm (\delta + \varepsilon_1 - \varepsilon_2), \pm (\delta - \varepsilon_1 + \varepsilon_2), \pm (\delta + \varepsilon_1 + \varepsilon_2) \}
\]

The exchange graph \( E(D(2,1;\alpha)) \) (excluding loops) is described as follows.
\begin{center}

\begin{tikzpicture}
    % Nodes
    \node (b1) at (0,2) {\(\mathfrak{b}_1\)};
    \node (b2) at (-2,0) {\(\mathfrak{b}_2\)};
    \node (b3) at (0,0) {\(\mathfrak{b}_3\)};
    \node (b4) at (2,0) {\(\mathfrak{b}_4\)};
    
    % Edges with labels
    \draw (b1) -- node[left] {2} (b3);
    \draw (b2) -- node[above] {1} (b3);
    \draw (b3) -- node[above] {3} (b4);
\end{tikzpicture}
\end{center}
The Cartan graph \( G(D(2,1;\alpha)) \) is defined as follows.

\[
\begin{array}{cc}
A^{\mathfrak{b}_1} = 
\begin{pmatrix} 
2 & -1 & 0 \\ 
-1 & 2 & -1 \\ 
0 & -1 & 2 
\end{pmatrix},\quad
&
A^{\mathfrak{b}_2} = 
\begin{pmatrix} 
2 & -1 & -1 \\ 
-1 & 2 & 0 \\ 
-1 & 0 & 2 
\end{pmatrix},
\\[10pt]
A^{\mathfrak{b}_3} = 
\begin{pmatrix} 
2 & -1 & -1 \\ 
-1 & 2 & -1 \\ 
-1 & -1 & 2 
\end{pmatrix},\quad
&
A^{\mathfrak{b}_4} = 
\begin{pmatrix} 
2 & 0 & -1 \\ 
0 & 2 & -1 \\ 
-1 & -1 & 2 
\end{pmatrix}.
\end{array}
\]

The corresponding positive root systems for each vertex are:

\[
\begin{aligned}
R^{\mathfrak{b}_1 +} &= \{ \alpha_1^{\mathfrak{b}_1}, \alpha_1^{\mathfrak{b}_1} + \alpha_2^{\mathfrak{b}_1}, \alpha_1^{\mathfrak{b}_1} + \alpha_2^{\mathfrak{b}_1} + \alpha_3^{\mathfrak{b}_1}, \alpha_1^{\mathfrak{b}_1} + 2\alpha_2^{\mathfrak{b}_1} + \alpha_3^{\mathfrak{b}_1}, \alpha_2^{\mathfrak{b}_1}, \alpha_2^{\mathfrak{b}_1} + \alpha_3^{\mathfrak{b}_1}, \alpha_3^{\mathfrak{b}_1} \} \\[15pt]
R^{\mathfrak{b}_2 +} &= \{ \alpha_2^{\mathfrak{b}_2}, \alpha_2^{\mathfrak{b}_2} + \alpha_1^{\mathfrak{b}_2}, \alpha_2^{\mathfrak{b}_2} + \alpha_1^{\mathfrak{b}_2} + \alpha_3^{\mathfrak{b}_2}, \alpha_2^{\mathfrak{b}_2} + 2\alpha_1^{\mathfrak{b}_2} + \alpha_3^{\mathfrak{b}_2}, \alpha_1^{\mathfrak{b}_2}, \alpha_1^{\mathfrak{b}_2} + \alpha_3^{\mathfrak{b}_2}, \alpha_3^{\mathfrak{b}_2} \} \\[15pt]
R^{\mathfrak{b}_3 +} &= \{ \alpha_1^{\mathfrak{b}_3}, \alpha_1^{\mathfrak{b}_3} + \alpha_2^{\mathfrak{b}_3}, \alpha_1^{\mathfrak{b}_3} + \alpha_3^{\mathfrak{b}_3}, \alpha_1^{\mathfrak{b}_3} + \alpha_2^{\mathfrak{b}_3} + \alpha_3^{\mathfrak{b}_3}, \alpha_2^{\mathfrak{b}_3}, \alpha_2^{\mathfrak{b}_3} + \alpha_3^{\mathfrak{b}_3}, \alpha_3^{\mathfrak{b}_3} \} \\[15pt]
R^{\mathfrak{b}_4 +} &= \{ \alpha_1^{\mathfrak{b}_1}, \alpha_1^{\mathfrak{b}_1} + \alpha_3^{\mathfrak{b}_1}, \alpha_1^{\mathfrak{b}_1} + \alpha_3^{\mathfrak{b}_1} + \alpha_2^{\mathfrak{b}_1}, \alpha_1^{\mathfrak{b}_1} + 2\alpha_3^{\mathfrak{b}_1} + \alpha_2^{\mathfrak{b}_1}, \alpha_3^{\mathfrak{b}_1}, \alpha_3^{\mathfrak{b}_1} + \alpha_2^{\mathfrak{b}_1}, \alpha_2^{\mathfrak{b}_1} \}
\end{aligned}
\]

Fixing a suitable total order,  for example, the following correspondences hold:
\[
2 \varepsilon_1 \leftrightarrow \operatorname{orb}(\alpha_1^{\mathfrak{b}_1}), \quad 
\delta - \varepsilon_1 - \varepsilon_2 \leftrightarrow \operatorname{orb}(\alpha_2^{\mathfrak{b}_1}), \quad 
2 \varepsilon_2 \leftrightarrow \operatorname{orb}(\alpha_3^{\mathfrak{b}_1}),
\]
\[
\delta + \varepsilon_1 + \varepsilon_2 \leftrightarrow \operatorname{orb}(\alpha_1^{\mathfrak{b}_1} + \alpha_2^{\mathfrak{b}_1} + \alpha_3^{\mathfrak{b}_1}),
\quad 
2 \delta \leftrightarrow \operatorname{orb}(\alpha_1^{\mathfrak{b}_1} + 2 \alpha_2^{\mathfrak{b}_1} + \alpha_3^{\mathfrak{b}_1}).
\]

we  also note that
\[
\Delta^{\text{pure}+} = \{ 2\delta, 2\varepsilon_1, 2\varepsilon_2, \delta + \varepsilon_1 + \varepsilon_2 \} 
, \quad
\Delta_\otimes^{\text{pure}+} = \{ \delta + \varepsilon_1 + \varepsilon_2 \}.
\]

\end{example}

  For other types of \( G(\mathfrak{g}) \), see \cite{musson2012lie,bonfert2024weyl,andruskiewitsch2017finite}.

\section{Odd Verma's theorem for Nichols algebras of diagonal type} \label{app:Nichols_algebras}
\subsection{Nichols algebras of diagonal type}
The braided monoidal category \( {}^G_G\mathcal{YD} \) (resp. \( {}^G_G\mathcal{YD}^\mathrm{fd} \)) of Yetter-Drinfeld modules (resp. finite-dimensional Yetter-Drinfeld modules) over a group \( G \), as well as the Nichols algebra over them, are discussed in detail in \cite{etingof2015tensor,heckenberger2020hopf}.

\begin{definition}\label{def_dim}
\(G \text{-grVec} ^\mathrm{fd}\) denotes the tensor category of finite-dimensional \( G \)-graded vector spaces.

Consider the forgetful functor as underlying tensor categories (but not as braided tensor categories!):
\[
F: {}_G^G\mathcal{YD} ^\mathrm{fd}\cong G\text{-grVec}^\mathrm{fd} \to \text{Vec}^\mathrm{fd}.
\]

Define
\[
\dim V := \dim_k F(V).
\]

Let \( \theta \in \mathbb{N} \), and set \( I = \{1, 2, \ldots, \theta\} \). We denote \( \{\alpha_1, \ldots, \alpha_\theta\} \) as the canonical \( \mathbb{Z} \)-basis of \( \mathbb{Z}^\theta \).

For a bicharacter \( \mathfrak{q}(- , -) : \mathbb{Z}^\theta \times \mathbb{Z}^\theta \to k^\times \), there exists a direct sum 
\[
V = kx_1 \oplus \dots \oplus kx_\theta \in {^{\mathbb{Z}^\theta}_{\mathbb{Z}^\theta} \mathcal{YD}}
\]
of \( \theta \) one-dimensional Yetter-Drinfeld modules such that the following holds:
\[
c_{V,V}(x_i \otimes x_j) = \mathfrak{q}(\alpha_i, \alpha_j) x_j \otimes x_i \quad i,j \in I
\]
We write \( q_{ij} = \mathfrak{q}(\alpha_i, \alpha_j) \).

This \( V \)'s Nichols algebra is denoted by \( B_\mathfrak{q} \). Such Nichols algebras are called of diagonal type. The algebra and coalgebra structures of \( B_\mathfrak{q} \) are fully determined by the braiding matrix \( (q_{ij}) \). The number \( \theta \) is called the rank of \( B_\mathfrak{q} \).

\( B_\mathfrak{q} \) inherits a natural \( \mathbb{Z}_{\geq 0}^\theta \)-grading \( \deg x_i := \alpha_i \). This grading is compatible with both the algebra and coalgebra structures of \( B_\mathfrak{q} \), by the construction of \( B_\mathfrak{q} \) as a Nichols algebra.
By these definitions, \( B_\mathfrak{q} \) is a bimonoid object in \( \mathbb{Z}^\theta_{\geq 0}\text{-gr} {}_{\mathbb{Z}^\theta}^{\mathbb{Z}^\theta}\mathcal{YD} \).

\end{definition}

\begin{example}[Classification of rank 1 Nichols algebras \cite{heckenberger2020hopf} Example 1.10.1]\label{Ni_rank1} When \( \theta = 1 \), a bicharacter \(\mathfrak{q}\) can be identified with an element \( q \in k^\times \).
The graded algebra \( B_\mathfrak{q} \) is classified as follows:
\[
B_\mathfrak{q} \simeq
\begin{cases}
k[x]/(x^{\operatorname{ord}q}) &  1 < \operatorname{ord}q < \infty, \\
k[x] & q = 1 \text{ or } \operatorname{ord}q = \infty.
\end{cases}
\]
\end{example}

\begin{example}[\cite{heckenberger2020hopf} Theorem 16.2.5]
Let \( A = (d_i a_{ij}) \) be a symmetrized generalized Cartan matrix. If \( q_{ij} = q^{d_i a_{ij}} \) and \( q \) is not a root of unity, then we have \( B_\mathfrak{q} \simeq U_{q}^+(\mathfrak{g}_A) \).

Here, \( U_{q}^+(\mathfrak{g}_A) \) represents the positive part of the quantum group associated with Kac-Moody Lie algebra \( \mathfrak{g}_A \). 
This result was first proven by Lusztig in \cite{lusztig2010introduction} in the case of finite type.

\end{example}

Using the theory of Lyndon words, a PBW-type basis for a Nichols algebra can be constructed.

\begin{theorem}[\cite{andruskiewitsch2017finite} 2.6, \cite{kharchenko2015quantum}] \label{Ni_PBW}
For a bicharacter \( \mathfrak{q} \), there exists a totally ordered set \( (S, \leq) \) such that for each \( s \in S \), there exists a homogeneous element \( X_s \in B_\mathfrak{q} \) satisfying:
\[
\left\{ X_{l_1}^{m_1} \cdots X_{l_k}^{m_k} \mid k \geq 0, \, l_1 < \cdots < l_k \in S, \, 0 \leq m_i < \operatorname{ord}\mathfrak{q}(\deg X_{l_i}, \deg X_{l_i}) \right\}
\]
is a basis for \( B_\mathfrak{q} \).
\end{theorem}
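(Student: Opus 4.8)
The plan is to prove this by Kharchenko's method of quantum PBW bases, in the form adapted to Nichols algebras in \cite{andruskiewitsch2017finite}. Realize \( B_{\mathfrak q} \) as a quotient \( T(V)/\mathcal I \) of the tensor algebra \( T(V)=k\langle x_1,\dots,x_\theta\rangle \), which we identify with the algebra of noncommutative polynomials on the alphabet \( \mathbb I=\{1,\dots,\theta\} \); here \( \mathcal I \) is the defining ideal of the Nichols algebra, i.e.\ the kernel of the quantum symmetrizer, equivalently the largest \( \mathbb Z_{\ge 0}^\theta \)-graded Hopf ideal supported in degrees \( \ge 2 \). Put the lexicographic order on words induced by \( 1<\dots<\theta \). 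Recall that a word is \emph{Lyndon} if it is strictly smaller than each of its proper right factors, that every word factors uniquely as a non-increasing product \( u_1\cdots u_k \) of Lyndon words (Chen--Fox--Lyndon), and that every Lyndon word of length \( \ge 2 \) has a canonical factorization \( u=u_1u_2 \) into Lyndon words with \( u_1<u_2 \). Using the braided commutator \( [a,b]_c:=ab-\mathfrak q(\deg a,\deg b)\,ba \) of homogeneous elements, define the \emph{hyperletter} \( [u]\in T(V) \) by \( [i]:=x_i \) and \( [u]:=[\,[u_1],[u_2]\,]_c \). The first step is the purely combinatorial lemma inside \( T(V) \): the leading word of \( [u] \) is \( u \) with coefficient \( 1 \), and the \emph{hyperwords} \( [v_1][v_2]\cdots[v_k] \) with \( v_1\ge\dots\ge v_k \) Lyndon form a \( k \)-basis of \( T(V) \), triangular for the lex order on the leading words \( v_1\cdots v_k \).

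Next I would push this down to \( B_{\mathfrak q} \). Write \( X_u \) for the image of \( [u] \), and let \( S \) be the set of Lyndon words \( u \) such that \( u \) is not the leading word of any nonzero element of \( \mathcal I \) — equivalently, such that \( X_u \) is not a \( k \)-linear combination of hyperwords of strictly smaller leading word. A standard leading-term argument (Bergman's Diamond Lemma, or Gröbner-basis reduction) then shows that the images of the hyperwords all of whose Lyndon factors lie in \( S \) both span \( B_{\mathfrak q} \) and are linearly independent. Collapsing equal consecutive factors, each such monomial is uniquely of the form \( X_{l_1}^{m_1}\cdots X_{l_k}^{m_k} \) with \( l_1>\dots>l_k \) in \( S \) and \( m_i\ge 1 \), so this family — with, for the moment, no upper bound on the \( m_i \) — is a basis; taking \( X_s \) (\( s\in S \)) as the asserted PBW generators, homogeneity is automatic since \( [u] \) is \( \mathbb Z_{\ge 0}^\theta \)-homogeneous by construction.

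The remaining and most delicate point is to identify the exact range of exponents as \( 0\le m_i<h_{l_i} \) with \( h_u:=\operatorname{ord}\mathfrak q(\deg X_u,\deg X_u) \) (read as ``unbounded'' when this order is infinite). Here the diagonal hypothesis is essential. Computing the coproduct of \( [u] \) in \( T(V) \), one shows that modulo hyperwords of strictly smaller leading word \( [u] \) is a skew-primitive-like element whose relevant self-braiding is exactly \( q_{uu}:=\mathfrak q(\deg X_u,\deg X_u) \); feeding this into the quantum Leibniz rule together with the vanishing of the relevant \( q \)-binomial coefficients at \( q=q_{uu} \) forces, when \( p:=\operatorname{ord}q_{uu}<\infty \), that \( u^{p} \) is the leading word of a relation, so \( X_u^{p} \) already lies in the span of PBW monomials of smaller leading word and the exponent of \( X_u \) may be truncated at \( p \); conversely, when \( \operatorname{ord}q_{uu}=\infty \) no power of \( X_u \) is reducible, in agreement with the rank-one classification of \cref{Ni_rank1} applied to the subalgebra generated by \( X_u \). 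One must then check that performing these truncations introduces no new leading words outside the chosen family — a confluence check in the spirit of the Diamond Lemma — after which the truncated monomials still span and remain linearly independent, giving the stated basis. I expect the two hard parts to be precisely this confluence bookkeeping and the coproduct computation that pins the height down to \( \operatorname{ord}q_{uu} \) rather than merely bounding it above.
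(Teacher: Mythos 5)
The paper does not prove this theorem; it is quoted verbatim from the cited sources (Kharchenko's PBW theorem and its adaptation to Nichols algebras in Andruskiewitsch--Angiono), so there is no internal proof to compare against. Your proposal correctly reconstructs the standard Lyndon-word/hyperletter route taken in those references: form hyperletters by iterated braided commutators along Shirshov factorizations, observe the triangularity of hyperwords in $T(V)$, restrict to the Lyndon words whose hyperletters are not reducible modulo lower leading terms, and then truncate exponents.

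Two points where your sketch is thinner than what a complete argument demands. First, the identification of the height as exactly $\operatorname{ord}\mathfrak q(\deg X_u,\deg X_u)$ --- not merely a divisor or upper bound --- is the place where the Nichols-algebra property (the defining ideal is the \emph{maximal} graded Hopf ideal in degrees $\ge 2$, equivalently no nonzero primitives in those degrees) enters irreplaceably; the coproduct computation you gesture at must show that, modulo the ideal generated by lower hyperwords, $X_u^{p}$ with $p=\operatorname{ord}q_{uu}$ becomes primitive, whence zero in $B_{\mathfrak q}$, and that no smaller power is forced to vanish. For a pre-Nichols quotient this argument would give only an inequality, and Kharchenko's general theorem indeed allows more exotic heights; the equality is special to $B_{\mathfrak q}$ and is the content of \cref{R_q} / the rank-one classification you invoke. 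Second, the confluence/Diamond-Lemma step you flag as ``bookkeeping'' is actually where the work lives: one must verify that the rewriting system generated by the relations with leading words $u_1u_2$ (for $u_1>u_2$) and $u^{p}$ is confluent on the monomials $X_{l_1}^{m_1}\cdots X_{l_k}^{m_k}$, and this uses the $q$-Jacobi and $q$-Leibniz identities for the braided commutator in an essential way. These gaps are normal for a sketch, and the overall strategy matches the cited proofs; just be aware that the paper itself takes the theorem as a black box.
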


\begin{proposition}[\cite{andruskiewitsch2008nichols} Lemma 2.18]\label{R_q}
If \( \#R_\mathfrak{q}^+ < \infty \), we define \( R_\mathfrak{q}^+ = \{ \deg(X_s) \mid s \in S \} \subset \mathbb{Z}_{\geq 0}^\theta \). Then \( R_\mathfrak{q}^+ \) does not depend on the choice of the ordered set \( S \).
\end{proposition}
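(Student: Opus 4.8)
The plan is to read the multiset of PBW-generator degrees off the $\mathbb{Z}_{\ge 0}^\theta$-graded Hilbert series of $B_\mathfrak{q}$, which visibly does not depend on the auxiliary data $(S,\le,\{X_s\}_{s\in S})$. First I would work in $\mathbb{Z}[[t_1,\dots,t_\theta]]$, writing $t^\alpha=t_1^{a_1}\cdots t_\theta^{a_\theta}$ for $\alpha=(a_1,\dots,a_\theta)$, and set
\[
\mathcal{H}(t)=\sum_{\alpha\in\mathbb{Z}_{\ge 0}^\theta}\bigl(\dim (B_\mathfrak{q})_\alpha\bigr)\,t^\alpha,
\]
which makes sense because $B_\mathfrak{q}$ is a $\mathbb{Z}_{\ge 0}^\theta$-graded quotient of the tensor algebra on $x_1,\dots,x_\theta$ and hence has finite-dimensional homogeneous components; note $\mathcal{H}(t)$ depends only on $\mathfrak{q}$, not on $S$.

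Next I would factor $\mathcal{H}(t)$ using the PBW basis of \cref{Ni_PBW}. Writing $\beta_s:=\deg X_s\in\mathbb{Z}_{\ge 0}^\theta\setminus\{0\}$ and $N_s:=\operatorname{ord}\mathfrak{q}(\beta_s,\beta_s)\in\{2,3,\dots\}\cup\{\infty\}$, summing $t^{\deg}$ over the basis monomials $X_{l_1}^{m_1}\cdots X_{l_k}^{m_k}$ gives
\[
\mathcal{H}(t)=\prod_{s\in S}\ \sum_{0\le m<N_s}t^{m\beta_s}=\prod_{s\in S}\frac{1-t^{N_s\beta_s}}{1-t^{\beta_s}},
\]
with the $s$-th factor read as $(1-t^{\beta_s})^{-1}$ when $N_s=\infty$. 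The point is that $N_s$ is a function of $\beta_s$ alone (through $\mathfrak{q}$), so each factor depends only on $\beta_s$; hence the product depends on $S$ only through the multiset $\mathcal{M}$ consisting of the $\beta_s$ taken with multiplicities. Under the hypothesis $\#R_\mathfrak{q}^+<\infty$ the set $S$ is finite — each $\beta\in R_\mathfrak{q}^+$ occurs at most $\dim(B_\mathfrak{q})_\beta<\infty$ times — so this is a finite product and there is no convergence issue.

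Finally, and this is where the actual content lies, I would show the assignment $\mathcal{M}\mapsto\mathcal{H}(t)$ is injective by recovering the multiplicity $\mu(\beta)$ of each $\beta$ in $\mathcal{M}$ by induction on the total degree $d(\beta)=a_1+\dots+a_\theta$. Each factor $\frac{1-t^{N\beta}}{1-t^{\beta}}$ has the shape $1+t^{\beta}+(\text{monomials }t^{m\beta}\text{ with }m\ge 2)$, so in the expansion of the product the coefficient of $t^{\beta}$ is $\mu(\beta)$ — from choosing $t^{\beta}$ in one of the $\mu(\beta)$ factors attached to a generator of degree $\beta$ — plus contributions in which either at least two factors are chosen nontrivially, or a single factor contributes $t^{m\beta_s}$ with $m\ge 2$; a total-degree count shows every generator occurring in such a contribution has total degree $<d(\beta)$, so these contributions are already determined by $\{\mu(\gamma):d(\gamma)<d(\beta)\}$ and $\mathfrak{q}$. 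Hence $\mu(\beta)$ equals the coefficient of $t^\beta$ in $\mathcal{H}(t)$ minus an expression fixed by the inductive data, so it is determined; equivalently one may pass to $\log\mathcal{H}(t)\in\mathbb{Q}[[t_1,\dots,t_\theta]]$ and invert degree by degree over the multiples of $\beta$. Combining the three steps, $\mathcal{H}(t)$ is independent of $(S,\le)$ and determines $\mathcal{M}$, hence its underlying set $R_\mathfrak{q}^+=\{\deg X_s\mid s\in S\}$. The only step I expect to require care is the last one; the factorization is a direct consequence of \cref{Ni_PBW}, and the independence of $\mathcal{H}(t)$ from $S$ is immediate.
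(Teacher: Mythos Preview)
The paper does not provide its own proof of this proposition; it simply records the statement with a citation to \cite{andruskiewitsch2008nichols}, Lemma~2.18, and moves on. Your Hilbert-series argument is correct and is the standard route: the $\mathbb{Z}_{\ge 0}^\theta$-graded Hilbert series of $B_\mathfrak{q}$ is intrinsic to $\mathfrak{q}$, it factors over the PBW generators by \cref{Ni_PBW} with each factor depending only on the degree $\beta_s$ (since $N_s=\operatorname{ord}\mathfrak{q}(\beta_s,\beta_s)$), and your induction on total degree then recovers the multiset of generator degrees---hence a fortiori the set $R_\mathfrak{q}^+$---from this series.
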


\begin{corollary}
If \( \#R_\mathfrak{q}^+ < \infty \), then there is a \( \mathbb{Z}_{\geq 0}^\theta \)-graded Yetter-Drinfeld module isomorphism
\[
B_\mathfrak{q} \simeq \bigotimes_{\alpha \in R_\mathfrak{q}^+} B(k X_\alpha).
\]
\end{corollary}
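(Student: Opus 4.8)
The plan is to read the claimed factorization off the Lyndon-word PBW basis of \cref{Ni_PBW}, and to upgrade the resulting linear bijection to an isomorphism in $\mathbb{Z}^\theta_{\geq 0}\text{-gr}\,{}^{\mathbb{Z}^\theta}_{\mathbb{Z}^\theta}\mathcal{YD}$. Fix a totally ordered PBW datum $(S,\leq)$ with homogeneous generators $X_s\in B_\mathfrak{q}$ as in \cref{Ni_PBW}. Since $\#R_\mathfrak{q}^+<\infty$, the algebra $B_\mathfrak{q}$ has finite-dimensional homogeneous components and only finitely many degrees occur among the $X_s$, so $S$ is finite; moreover, by the structure theory of diagonal-type Nichols algebras with finite root system (\cite{andruskiewitsch2017finite}, compatibly with \cref{R_q}), $s\mapsto\deg X_s$ is a bijection $S\xrightarrow{\sim}R_\mathfrak{q}^+$, and we write $X_\alpha$ for the generator of degree $\alpha$. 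Put $N_\alpha:=\operatorname{ord}\mathfrak{q}(\alpha,\alpha)\in\mathbb{N}\cup\{\infty\}$. By \cref{Ni_PBW} the powers $X_\alpha^0,\dots,X_\alpha^{N_\alpha-1}$ are linearly independent in $B_\mathfrak{q}$, while the quantum binomial identity (valid in any Nichols algebra) forces $X_\alpha^{N_\alpha}=0$ when $N_\alpha<\infty$; comparing with the rank-one classification \cref{Ni_rank1}, this identifies the $\mathbb{Z}^\theta_{\geq 0}$-graded subalgebra $\langle X_\alpha\rangle\subseteq B_\mathfrak{q}$ with $B(kX_\alpha)$. Since $kX_\alpha$ is a one-dimensional homogeneous Yetter--Drinfeld submodule of $B_\mathfrak{q}$ and the multiplication of $B_\mathfrak{q}$ is a morphism in ${}^{\mathbb{Z}^\theta}_{\mathbb{Z}^\theta}\mathcal{YD}$ ($B_\mathfrak{q}$ being a bimonoid object there), $B(kX_\alpha)=\langle X_\alpha\rangle$ is in fact a graded Yetter--Drinfeld submodule.

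I would then form the iterated multiplication map
\[
\mu\colon\ \bigotimes_{\alpha\in R_\mathfrak{q}^+}B(kX_\alpha)\ \hookrightarrow\ B_\mathfrak{q}^{\otimes\,\#R_\mathfrak{q}^+}\ \xrightarrow{\ \mathrm{mult}\ }\ B_\mathfrak{q},
\]
the tensor factors being ordered by $\leq$ on $S$. Both arrows are morphisms in $\mathbb{Z}^\theta_{\geq 0}\text{-gr}\,{}^{\mathbb{Z}^\theta}_{\mathbb{Z}^\theta}\mathcal{YD}$: the first because each inclusion $B(kX_\alpha)\hookrightarrow B_\mathfrak{q}$ is, and the second because $B_\mathfrak{q}$ is an algebra object; hence so is $\mu$. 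By \cref{Ni_rank1} applied factorwise, the source has linear basis the elementary tensors $X_{l_1}^{m_1}\otimes\cdots\otimes X_{l_k}^{m_k}$ with $l_1<\cdots<l_k$ in $S$ and $0\le m_i<N_{\deg X_{l_i}}$; the map $\mu$ sends such a tensor to the PBW monomial $X_{l_1}^{m_1}\cdots X_{l_k}^{m_k}$, and by \cref{Ni_PBW} these monomials form a basis of $B_\mathfrak{q}$. Thus $\mu$ is a linear isomorphism, and therefore an isomorphism in the category. Finally, any reordering of the tensor factors is realized by the braiding isomorphisms of ${}^{\mathbb{Z}^\theta}_{\mathbb{Z}^\theta}\mathcal{YD}$, which are graded Yetter--Drinfeld isomorphisms, so the isomorphism class of $\bigotimes_{\alpha\in R_\mathfrak{q}^+}B(kX_\alpha)$ is independent of the chosen order and the unordered notation of the statement is legitimate.

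The linear bijection is the routine part, immediate from PBW. The point that needs care is the structural bookkeeping: that each one-generated subalgebra $\langle X_\alpha\rangle$ really is the rank-one Nichols algebra $B(kX_\alpha)$ --- so that the right-hand factors are genuine Nichols algebras and not merely quotients of a polynomial ring --- which rests on $X_\alpha^{N_\alpha}=0$ together with \cref{Ni_rank1}, and that $\mu$, assembled from the non-commutative product of $B_\mathfrak{q}$, is a morphism of braided graded objects. Both points ultimately reduce to the homogeneity of the PBW generators and to the fact that the product of $B_\mathfrak{q}$ is a morphism in ${}^{\mathbb{Z}^\theta}_{\mathbb{Z}^\theta}\mathcal{YD}$, but they should be made explicit rather than left implicit.
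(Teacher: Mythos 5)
Your proposal is essentially the paper's own approach expanded: the paper proves this with a one-line citation of \cref{Ni_rank1} and \cref{Ni_PBW}, and your ordered multiplication map $\bigotimes_{\alpha}B(kX_\alpha)\to B_\mathfrak{q}$ matching elementary tensors with PBW monomials is exactly the isomorphism implicitly meant. One imprecision worth noting: the justification of $X_\alpha^{N_\alpha}=0$ via ``the quantum binomial identity'' is valid as stated only when $X_\alpha$ is primitive, i.e.\ when $\alpha$ is simple; for non-simple root vectors $\Delta(X_\alpha)$ has additional terms, and the vanishing, while true, is normally obtained by transport along Lusztig automorphisms rather than by a direct binomial computation. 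Since the corollary asserts only a graded Yetter--Drinfeld \emph{module} isomorphism, you do not in fact need the algebra identification $\langle X_\alpha\rangle = B(kX_\alpha)$: the linear map $B(kX_\alpha)\to B_\mathfrak{q}$ sending $X_\alpha^m\mapsto X_\alpha^m$ for $0\le m<N_\alpha$ is already an injective morphism of graded Yetter--Drinfeld modules by PBW linear independence and homogeneity (the $\mathbb{Z}^\theta$-action and coaction are determined degreewise by the bicharacter), which is all the rest of your argument requires.
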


\begin{proof}
    This follows from \cref{Ni_rank1} and \cref{Ni_PBW}.
\end{proof}

\begin{corollary}
\( \dim B_\mathfrak{q} < \infty \) if and only if \( \#R_\mathfrak{q}^+ < \infty \) and \(1<\operatorname{ord}\mathfrak{q}(\alpha, \alpha)  < \infty \) for all \( \alpha \in R_\mathfrak{q}^+ \).
\end{corollary}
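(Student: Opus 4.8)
The plan is to reduce the statement to the rank-one classification of \cref{Ni_rank1} via the PBW-type factorization that immediately precedes it. First I would invoke \cref{Ni_PBW} to fix a totally ordered set $(S,\le)$ and homogeneous PBW generators $X_s$ ($s\in S$), so that $B_\mathfrak{q}$ has the monomial basis formed by the $X_{l_1}^{m_1}\cdots X_{l_k}^{m_k}$ with $l_1<\cdots<l_k$ in $S$ and $0\le m_i<N_{l_i}$, where $N_s:=\operatorname{ord}\mathfrak{q}(\deg X_s,\deg X_s)$ is the height of $X_s$ (understood, as in \cref{Ni_rank1}, to be $\infty$ when $\mathfrak{q}(\deg X_s,\deg X_s)$ equals $1$ or is not a root of unity, so that $2\le N_s\le\infty$ in every case). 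Counting basis monomials by their $S$-support gives the key identity $\dim_k B_\mathfrak{q}=\prod_{s\in S}N_s$; equivalently, when $\#R_\mathfrak{q}^+<\infty$ one has the graded isomorphism $B_\mathfrak{q}\simeq\bigotimes_{\alpha\in R_\mathfrak{q}^+}B(kX_\alpha)$ from the corollary above, together with $\dim B(kX_\alpha)=\operatorname{ord}\mathfrak{q}(\alpha,\alpha)$ (when finite and $>1$) from \cref{Ni_rank1}.

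For the ``if'' direction I would assume $\#R_\mathfrak{q}^+<\infty$ and $1<\operatorname{ord}\mathfrak{q}(\alpha,\alpha)<\infty$ for every $\alpha\in R_\mathfrak{q}^+$. Then, using the preceding corollary, $B_\mathfrak{q}\simeq\bigotimes_{\alpha\in R_\mathfrak{q}^+}B(kX_\alpha)$, and by \cref{Ni_rank1} each tensor factor $B(kX_\alpha)\cong k[x]/(x^{\operatorname{ord}\mathfrak{q}(\alpha,\alpha)})$ has finite dimension $\operatorname{ord}\mathfrak{q}(\alpha,\alpha)$. A finite tensor product of finite-dimensional spaces is finite-dimensional, so $\dim B_\mathfrak{q}<\infty$.

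For the ``only if'' direction I would argue by contraposition, in two cases. If $\#R_\mathfrak{q}^+=\infty$ then (using \cref{R_q} to write $R_\mathfrak{q}^+=\{\deg X_s\mid s\in S\}$) the set $S$ is infinite, and since each $X_s=X_s^1$ occurs among the PBW basis monomials (recall $N_s\ge 2$), these infinitely many elements are linearly independent, whence $\dim B_\mathfrak{q}=\infty$. If instead $\#R_\mathfrak{q}^+<\infty$ but some $\alpha\in R_\mathfrak{q}^+$ has $\operatorname{ord}\mathfrak{q}(\alpha,\alpha)$ equal to $1$ or to $\infty$, then by \cref{Ni_rank1} the corresponding tensor factor $B(kX_\alpha)\cong k[x]$ is infinite-dimensional, and hence so is $B_\mathfrak{q}$. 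The only point requiring attention — a matter of convention rather than a genuine obstacle — is the reading of $\operatorname{ord}\mathfrak{q}(\alpha,\alpha)$ when $\mathfrak{q}(\alpha,\alpha)=1$: it must be treated on the same footing as the infinite-order case, exactly as in \cref{Ni_rank1}, so that the height of such a PBW generator is $\infty$ and the factor $B(kX_\alpha)$ is the polynomial algebra. With this understood, both implications follow directly from \cref{Ni_PBW}, the preceding corollary, and \cref{Ni_rank1}, and no difficult step remains.
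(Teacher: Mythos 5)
Your argument is correct and follows essentially the only reasonable route, which is also the one the paper implicitly intends: combine the PBW factorization (\cref{Ni_PBW} and the preceding corollary) with the rank-one classification (\cref{Ni_rank1}), and note that a finite tensor product is finite-dimensional iff each factor is. Your remark on the convention for $\operatorname{ord}\mathfrak{q}(\alpha,\alpha)$ when $\mathfrak{q}(\alpha,\alpha)=1$ is a genuine point of care and is handled correctly, matching the treatment in \cref{Ni_rank1}.
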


\subsection{Lusztig autmorphisms of small quantum groups}

In this subsection, we introduce the algebras in which we are interested in. We will follow \cite{vay2023linkage}. 

\begin{definition}
We denote \(\tilde{U}_\mathfrak{q}\) as the Hopf algebra generated by the symbols \( K_i, K_i^{-1}, L_i, L_i^{-1}, E_i \), and \( F_i \), with \( i \in I \), subject to the relations:
    \[
    K_iE_j = q_{ij}E_jK_i, \quad L_iE_j = q_{ji}^{-1}E_jL_i,
    \]
    \[
    K_iF_j = q_{ij}^{-1}F_jK_i, \quad L_iF_j = q_{ji}F_jL_i,
    \]
    \[
    E_iF_j - F_jE_i = \delta_{i,j}(K_i - L_i),
    \]
    \[
    XY = YX, \quad K_iK_i^{-1} = L_iL_i^{-1} = 1,
    \]
    for all \( i, j \in I \) and \( X, Y \in \{K_i^{\pm 1}, L_i^{\pm 1} \mid i \in I\} \).

The counit \( \varepsilon: U_{\mathfrak{q}} \to k \) is defined as
\[
\varepsilon(K_i^{\pm 1}) = \varepsilon(L_i^{\pm 1}) = \varepsilon(E_i) = \varepsilon(F_i) = 0 \quad \text{for all } i \in I.
\]

Let \( \tau \) be the algebra antiautomorphism of \( \tilde{U}_q \) defined by
\[
\tau(K_i) = K_i, \quad \tau(L_i) = L_i, \quad \tau(E_i) = F_i, \quad \text{and} \quad \tau(F_i) = E_i
\]
for all \( i \in \mathbb{I} \).

Let \( J_\mathfrak{q} \) be the defining relation of the Nichols algebra of diagonal type, generated by \( E_i \), which is determined by the braiding matrix \( (q_{ij}) \).

Let \( {U}_\mathfrak{q} \) be the Hopf algebra obtained by quotienting \( \tilde{U}_q \) by \( J_\mathfrak{q} \) and \( \tau(J_\mathfrak{q}) \).

We have that $U_\mathfrak{q} = \bigoplus_{\mu \in \mathbb{Z}^\theta} (U_\mathfrak{q})_\mu$ is a $\mathbb{Z}^\theta$-graded Hopf algebra with
\[
\deg E_i = -\deg F_i = \alpha_i \quad \text{and} \quad \deg K_i^{\pm 1} = \deg L_i^{\pm 1} = 0 \quad \forall i \in I.
\]

The multiplication of $U_\mathfrak{q}$ induces a linear isomorphism
\[
U_\mathfrak{q}^- \otimes U_\mathfrak{q}^0 \otimes U_\mathfrak{q}^+ \cong U_\mathfrak{q},
\]
where
\[
U_\mathfrak{q}^+ = k\langle E_i \mid i \in I \rangle \cong \mathfrak{B}_\mathfrak{q}, \quad 
U_\mathfrak{q}^0 = k\langle K_i^{\pm 1}, L_i^{\pm 1} \mid i \in I \rangle, \quad 
U_\mathfrak{q}^- = k\langle F_i \mid i \in I \rangle.
\]
are \( \mathbb{Z}^\theta \)-graded subalgebras of $U_\mathfrak{q}$. We remark that \( U_\mathfrak{q}^0 \cong k(\mathbb{Z}^\theta \times \mathbb{Z}^\theta) \).

\end{definition}

\begin{remark}
We do not require an explicit presentation of the defining relations of \( \mathcal{B}_\mathfrak{q} \)  or the coproduct and antipode structures. All we need is the following remarkable Lusztig automorphism, which creates distinctions from the highest weight theory of more general Hopf algebras with trianglar decomposition \cite{vay2018hopf}.

There are variations of what is called "small quantum groups". However, as shown in \cite[Corollary 8.17]{vay2023linkage} , results on our algebra \( U_\mathfrak{q} \) can be applied to a broad class of small quantum groups, such as those discussed in \cite{lusztig1990quantum} or \cite{laugwitz2023finite}.

\end{remark}

\begin{theorem}[\cite{heckenberger2006weyl,heckenberger2010lusztig,azam2015classification}]
\label{Ni-Lus}
Let \( \bar{\mathfrak{q}} \) be a bicharacter such that \( B_{\bar{\mathfrak{q}}} \) is finite-dimensional. Then, there exists a simply connected finite Cartan graph \( G[\bar{\mathfrak{q}}] \) with a vertex set \( V(G[\bar{\mathfrak{q}}]) \) consisting of bicharacters with finite-dimensional Nichols algebras. For each vertex \( \mathfrak{q} \), there is an additive bijection between \( R_\mathfrak{q}^+ \) (in the sense of \cref{R_q}) and \( R^{\mathfrak{q}+} \) (in the sense of \cref{R^q}).

Moreover, for \( w \in \operatorname{Hom}_{W(G(\bar{\mathfrak{q}}))}(\mathfrak{q}_1, \mathfrak{q}_2) \), there exists an algebra isomorphism 
\[
T_w : U_{\mathfrak{q}_1} \to U_{\mathfrak{q}_2}
\]
satisfying
\[
T_w((U_{\mathfrak{q}_1})_\alpha) = (U_{\mathfrak{q}_2})_{w\alpha}, \quad \text{for any } \alpha \in \mathbb{Z}^\theta.
\]
\end{theorem}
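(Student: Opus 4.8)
The plan is to build $G[\bar{\mathfrak{q}}]$ explicitly from the arithmetic of the braiding matrix, check by hand that it is a simply connected finite Cartan graph, and only then produce the Lusztig maps on generators; the role of the hypothesis $\dim B_{\bar{\mathfrak{q}}}<\infty$ is precisely to make the numerical data below well defined, and essentially everything here is due to Heckenberger. First I would attach to each bicharacter $\mathfrak{q}$ with $\dim B_\mathfrak{q}<\infty$ a generalized Cartan matrix $A^\mathfrak{q}=(a_{ij}^\mathfrak{q})$ with $a_{ii}^\mathfrak{q}=2$ and, for $i\ne j$, $-a_{ij}^\mathfrak{q}$ the least $n\ge 0$ with $(n+1)_{q_{ii}}=0$ or $q_{ii}^{n}q_{ij}q_{ji}=1$; existence of $R_\mathfrak{q}^+$ in \cref{R_q}, equivalently boundedness of the PBW exponents in \cref{Ni_PBW}, guarantees these minima exist. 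Next I would introduce the $i$-th reflection $\rho_i(\mathfrak{q})$ by the standard formula
\[
(\rho_i\mathfrak{q})(\alpha_j,\alpha_k)=\mathfrak{q}(\alpha_j,\alpha_k)\,\mathfrak{q}(\alpha_i,\alpha_k)^{-a_{ij}^\mathfrak{q}}\,\mathfrak{q}(\alpha_j,\alpha_i)^{-a_{ik}^\mathfrak{q}}\,\mathfrak{q}(\alpha_i,\alpha_i)^{a_{ij}^\mathfrak{q}a_{ik}^\mathfrak{q}},
\]
and prove: $\rho_i$ is an involution, $B_{\rho_i\mathfrak{q}}$ is again finite-dimensional, and the $i$-th rows of $A^\mathfrak{q}$ and $A^{\rho_i\mathfrak{q}}$ coincide. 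Taking the $\rho$-orbit of $\bar{\mathfrak{q}}$ as vertex set and an $i$-colored edge between $\mathfrak{q}$ and $\rho_i\mathfrak{q}$ then yields a semi-Cartan graph by construction, finite since every vertex carries a finite positive root set.

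Next I would set up the root dictionary. Using \cref{Ni_PBW} together with the compatibility of $\rho_i$ with the reflection functor on Yetter--Drinfeld modules, one shows by induction along words that sending $\deg X_s$ to the real root obtained by applying the matching sequence of $s_i$'s to a simple root is a well-defined additive bijection $R_\mathfrak{q}^+\simeq R^{\mathfrak{q}+}$. In particular $R^{\mathfrak{q}}=R^{\mathfrak{q}+}\cup(-R^{\mathfrak{q}+})$, which is (CG3); and, given finiteness, (CG4) and simple-connectedness reduce to the statement that distinct vertices of the orbit carry distinct positive systems inside $\mathbb{Z}_{\ge 0}^\theta$, which is visible from the dictionary. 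This gives the first assertion: $G[\bar{\mathfrak{q}}]$ is a simply connected finite Cartan graph with $R_\mathfrak{q}^+\simeq R^{\mathfrak{q}+}$ at every vertex.

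For the second assertion I would define, for each $i$ and each vertex $\mathfrak{q}$, a map $T_i:U_\mathfrak{q}\to U_{\rho_i\mathfrak{q}}$ on generators by the standard Lusztig recipe: $T_i(E_i)$ a scalar multiple of $F_i$ and $T_i(F_i)$ of $E_i$, $T_i$ permuting the $K$'s and $L$'s according to $s_i$, and $T_i(E_j)$ a normalization of the braided adjoint power $(\operatorname{ad} E_i)^{-a_{ij}^\mathfrak{q}}(E_j)$ for $j\ne i$ with $T_i(F_j)$ its $\tau$-twin. Then I would check that $T_i$ annihilates the ideal generated by $J_\mathfrak{q}$ and $\tau(J_\mathfrak{q})$, so that it descends to an algebra isomorphism $U_\mathfrak{q}\to U_{\rho_i\mathfrak{q}}$ with $T_iT_{\rho_i}=\operatorname{id}$, and that it is graded with $T_i((U_\mathfrak{q})_\alpha)=(U_{\rho_i\mathfrak{q}})_{s_i\alpha}$. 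Composing the $T_i$ along any word representing a given $w\in\operatorname{Hom}_{W(G[\bar{\mathfrak{q}}])}(\mathfrak{q}_1,\mathfrak{q}_2)$ and invoking simple-connectedness, so that the composite is independent of the word, produces $T_w$ with the asserted grading behaviour.

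The genuinely hard input, and the only non-formal part, is the claim that $\rho_i$ preserves finite-dimensionality of the Nichols algebra and that the integers $a_{ij}^\mathfrak{q}$ chosen above are exactly the ones for which the resulting combinatorial datum is a generalized root system; this rests on Heckenberger's rank-two classification of Nichols algebras of diagonal type and on precise control of how $R_\mathfrak{q}^+$ transforms under $\rho_i$. Verifying that each $T_i$ respects the possibly intricate defining relations $J_\mathfrak{q}$, and that the $T_i$ satisfy enough braid-type relations for $T_w$ to be unambiguous, is lengthy but routine once the Cartan graph and its root system are in place.
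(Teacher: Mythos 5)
The paper does not prove this theorem; it is quoted from Heckenberger's work (\cite{heckenberger2006weyl,heckenberger2010lusztig,azam2015classification}). Your outline follows the same general route as those references: attach the Cartan matrix $A^\mathfrak{q}$ to each bicharacter via the standard arithmetic condition, define the reflections $\rho_i$, assemble a Cartan graph, match $R_\mathfrak{q}^+$ with $R^{\mathfrak{q}+}$, and produce the $T_i$ on generators. So far so good.

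However, there is a genuine gap in the step where you claim to obtain a \emph{simply connected} finite Cartan graph. You propose to take the $\rho$-orbit of $\bar{\mathfrak{q}}$ as the vertex set and then assert that ``(CG4) and simple-connectedness reduce to the statement that distinct vertices of the orbit carry distinct positive systems.'' This is not correct. The $\rho$-orbit of bicharacters gives what the present paper calls the \emph{small} Cartan graph of $\mathfrak{q}$, not its simply connected cover $G[\bar{\mathfrak{q}}]$. The failure is already visible in the Drinfeld--Jimbo case: there $\rho_i\mathfrak{q} = \mathfrak{q}$ for all $i$, so the orbit is a single vertex, but $\operatorname{Aut}_{W(G)}(\mathfrak{q})$ is the full Weyl group, which is nontrivial whenever $\theta\ge 1$ and the type is not a product of $A_1$'s; hence $\#\operatorname{Hom}_{W(G)}(\mathfrak{q},\mathfrak{q})>1$ and the graph is very far from simply connected. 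The condition ``distinct vertices carry distinct $R_\mathfrak{q}^+\subset\mathbb{Z}_{\ge 0}^\theta$'' constrains only the injectivity of the vertex map, not the automorphism groups of individual objects, and simple connectedness is precisely the vanishing of those automorphism groups. To fix this you must pass explicitly to the simply connected cover in the sense of \cref{prop:simconnec}, i.e.\ take as vertices pairs $(\mathfrak{q}, w)$ with $w$ a Weyl-groupoid morphism from $\bar{\mathfrak{q}}$ to $\mathfrak{q}$ (equivalently, reduced expressions modulo the Coxeter-type relations); the paper itself flags this in the remark following the theorem, noting that $G[\mathfrak{q}]$ is the simply connected cover of the small Cartan graph.

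A related, smaller loose end: once you work on the small Cartan graph, the well-definedness of $T_w$ for a morphism $w$ cannot be deduced from simple connectedness (which fails there); it rests instead on the braid relations satisfied by the $T_i$, which is a nontrivial theorem of Heckenberger and should be invoked as such rather than folded into ``routine once the Cartan graph is in place.'' After you pass to the simply connected cover the independence-of-word issue disappears by fiat, but the Lusztig maps themselves are still constructed and compared on the level of bicharacters, so the braid relations are still the real content behind the existence of $T_w$.
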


\begin{example}
The representation theory of a small quantum group corresponding to the rank 2 Nichols algebra \( B_\mathfrak{q} \) of type \( \text{ufo}(7) \) is described in detail (\cite{andruskiewitsch2018simple}). For this \( \mathfrak{q} \), \( G[\mathfrak{q}]\) is the Cartan graph given in \cref{w.5}, and it is known that such objects do not arise from (modular) contragredient Lie (super) algebras. The \( \mathbb{Z}^2 \)-degree of the PBW basis of \( B_\mathfrak{q} \) can be easily read from the frieze pattern in \cref{w.5}.
\end{example}

\begin{remark}
 \( G[\mathfrak{q}]\) is the simply connected cover of the small Cartan graph of \( \mathfrak{q} \) in the sense of \cite{heckenberger2020hopf}. For the Nichols algebra \( \mathfrak{B}_{\mathfrak{q}} \) of super type with the same Weyl groupoid as the basic Lie superalgebra \( \mathfrak{g} \), we have \( G[\mathfrak{q}] = SC(G(\mathfrak{g})) \).
\end{remark}

\begin{definition}
For a bicharacter \( \mathfrak{q} \) with finite-dimensional Nichols algebra, we define the rainbow boomerang graph \( RB[\mathfrak{q}] := RB(G[\mathfrak{q}]) \) (\cref{RBG}).
Note that \( G[\mathfrak{q}]\) is simply connected, so it is trivially path simply connected.
\end{definition}

\subsection{Odd Verma's theorem}

For \( \alpha = n_1\alpha_1 + \cdots + n_\theta \alpha_\theta \in \mathbb{Z}^\theta \), we set
\[
K_\alpha = K_1^{n_1} \cdots K_\theta^{n_\theta} \quad \text{and} \quad L_\alpha = L_1^{n_1} \cdots L_\theta^{n_\theta}.
\]
In particular, \( K_{\alpha_i} = K_i \) for \( i \in I \).

\begin{definition}

Fix a bicharacter \(
\bar{\mathfrak{q}} : \mathbb{Z}^\theta \times \mathbb{Z}^\theta \to k^\times.
\) If \( w \in \operatorname{Hom}_{W(G[\bar{\mathfrak{q}}])}(\mathfrak{q}, \bar{\mathfrak{q}}) \), then the triangular decomposition of \( U_\mathfrak{q} \) induces a new triangular decomposition on \( U_{\bar{\mathfrak{q}}} \). Explicitly,
\[
T_w(U_\mathfrak{q}^-) \otimes U_{\bar{\mathfrak{q}}}^0 \otimes T_w(U_\mathfrak{q}^+) \cong U_{\bar{\mathfrak{q}}},
\]
since \( T_w(U_\mathfrak{q}^0) = U_{\bar{\mathfrak{q}}}^0 \).

Given \( \lambda \in \mathbb{Z}^\theta \), we consider \( k_\lambda^\mathfrak{q} = k v_\lambda^\mathfrak{q} \) as a \( \mathbb{Z}^\theta \)-graded \( U_{\bar{\mathfrak{q}}}^0 T_w(U_\mathfrak{q}^+) \)-module concentrated in degree \( \lambda \) with the action
\[
K_\alpha L_\beta u v_\lambda^\mathfrak{q} = \varepsilon(u) \frac{\mathfrak{\bar{q}}(\alpha, \lambda)}{\mathfrak{\bar{q}}(\lambda, \beta)} v_\lambda^\mathfrak{q}, \quad \forall K_\alpha L_\beta \in U_{\bar{\mathfrak{q}}}^0, \, u \in T_w(U_\mathfrak{q}^+).
\]
Osing this, we introduce the \( \mathbb{Z}^\theta \)-graded \( U_{\bar{\mathfrak{q}}} \)-module
\[
M^\mathfrak{\bar{q}}(\lambda) = U_{\bar{\mathfrak{q}}} \otimes_{U_{\bar{\mathfrak{q}}}^0 T_w(U_\mathfrak{q}^+)} k_\lambda^\mathfrak{q}.
\]

Note that for all \( v \in (U_\mathfrak{\bar{q}})_\lambda \), we can compute:
\[
K_\alpha L_\beta v = \frac{\mathfrak{\bar{q}}(\alpha, \lambda)}{\mathfrak{\bar{q}}(\lambda, \beta)} v K_\alpha L_\beta,
\]
(see \cite{vay2023linkage}[((4.3)).

We also define an analog of the Weyl vector. (It differs from the one for \cite{vay2023linkage} by a factor of \(-1\).) Specifically, we define:
\[
\rho^\mathfrak{q} := -\frac{1}{2} \sum_{\beta \in R_\mathfrak{q}^+} (\operatorname{ord}\mathfrak{q}(\beta, \beta) - 1) \beta.
\]
\end{definition}

\begin{remark}
Instead of our special \( k_\lambda^\mathfrak{q} \), we could consider a more general situation. However, by \cite[Proposition 5.5]{vay2023linkage}, all blocks are equivalent to the block containing our Verma module (the principal block). Thus, for simplicity, we restrict our discussion to this case. 

Our Verma module corresponds to so called a type I representation when \( U_{\bar{\mathfrak{q}}} \) is of the classical Drinfeld-Jimbo type.
\end{remark}

We consider the category \( \mathbb{Z}^\theta\text{-gr}(U_{\bar{\mathfrak{q}}}\text{-Mod}) \), where morphisms respects this \( \mathbb{Z}^\theta\)-grading. (This is the module category of a monoid object in the category of \( \mathbb{Z}^\theta\text{-graded vector spaces} \) in the sense of \cite{etingof2015tensor}.)

Let \( M = \bigoplus_{\nu \in \mathbb{Z}^\theta} M_\nu \) be a \( \mathbb{Z}^\theta \)-graded vector space. The formal character of \( M \) is defined as:
\[
\mathrm{ch} \, M = \sum_{\mu \in \mathbb{Z}^\theta} \mathrm{dim}M_\mu e^\mu.
\]
The following can be shown in the same way as in \cite{Hirota2025}.
Note that
\[
\mathrm{ch} \, U_\mathfrak{q}^- = \prod_{\beta \in R_\mathfrak{q}^+} \frac{1 - e^{-\operatorname{ord}\mathfrak{\bar{q}}(\beta, \beta)\beta}}{1 - e^{-\beta}} = \prod_{\beta \in R_\mathfrak{q}^+} \left( 1 + e^{-\beta} + \cdots + e^{(1-\operatorname{ord}\mathfrak{\bar{q}}(\beta, \beta))\beta} \right).
\]

\begin{proposition} \cite[Lemma 6.1]{vay2023linkage}
Let \( w \in \operatorname{Hom}_{W(G[\bar{\mathfrak{q}}])}(\mathfrak{q}, \bar{\mathfrak{q}}) \) and \( w' \in \operatorname{Hom}_{W(G[\bar{\mathfrak{q}}])}(\mathfrak{q}', \bar{\mathfrak{q}}) \). For a pair of vertices \( \mathfrak{q}, \mathfrak{q}' \) in \( G[\bar{\mathfrak{q}}] \) and \( \lambda \in \mathfrak{h}^* \), the following statements hold:
\begin{enumerate}
    \item \( \operatorname{ch} M^{\mathfrak{q}}(\lambda - w\rho^{\mathfrak{q}}) = \operatorname{ch} M^{\mathfrak{q}'}(\lambda - w'\rho^{\mathfrak{q}'}) \).
    \item \( \dim M^{\mathfrak{q}'}(\lambda - w'\rho^{\mathfrak{q}'})_{\lambda - w\rho^{\mathfrak{q}}} = 1 \).
    \item \label{(5.2.dimhom} \( \dim \operatorname{Hom}(M^{\mathfrak{q}}(\lambda - w\rho^{\mathfrak{q}}), M^{\mathfrak{q}'}(\lambda - w'\rho^{\mathfrak{q}'})) = 1 \).
\end{enumerate}
\end{proposition}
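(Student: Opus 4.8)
The plan is to deduce all three claims from one explicit character identity, obtained by transporting the triangular decomposition of $U_{\mathfrak{q}}$ through the Lusztig isomorphism $T_w$ of \cref{Ni-Lus}. Fix $w\in\operatorname{Hom}_{W(G[\bar{\mathfrak{q}}])}(\mathfrak{q},\bar{\mathfrak{q}})$. Since $U_{\bar{\mathfrak{q}}}$ is free over $U_{\bar{\mathfrak{q}}}^0 T_w(U_{\mathfrak{q}}^+)$ on $T_w(U_{\mathfrak{q}}^-)$, the module $M^{\mathfrak{q}}(\lambda-w\rho^{\mathfrak{q}})$ is, as a $\mathbb{Z}^\theta$-graded vector space, a copy of $T_w(U_{\mathfrak{q}}^-)$ placed in top degree $\lambda-w\rho^{\mathfrak{q}}$; as $T_w$ shifts degrees by $w$, this gives
\[
\operatorname{ch}M^{\mathfrak{q}}(\lambda-w\rho^{\mathfrak{q}}) \;=\; e^{\lambda-w\rho^{\mathfrak{q}}}\sum_{\nu}\dim(U_{\mathfrak{q}}^-)_\nu\, e^{w\nu}.
\]
Substituting the product formula for $\operatorname{ch}U_{\mathfrak{q}}^-$ recalled just above the proposition, writing $m_\beta:=\operatorname{ord}\mathfrak{q}(\beta,\beta)$, and using that $w\rho^{\mathfrak{q}}=-\tfrac12\sum_{\beta\in R_{\mathfrak{q}}^+}(m_\beta-1)\,w\beta$ symmetrises each geometric sum, one obtains
\[
\operatorname{ch}M^{\mathfrak{q}}(\lambda-w\rho^{\mathfrak{q}}) \;=\; e^{\lambda}\prod_{\beta\in R_{\mathfrak{q}}^+}\chi_{m_\beta}(w\beta),\qquad \chi_m(\gamma):=\sum_{j=0}^{m-1}e^{\left(j-\frac{m-1}{2}\right)\gamma},
\]
where $\chi_m(-\gamma)=\chi_m(\gamma)$.

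\textbf{Reindexing and claims (1), (2).}
Next I would rewrite this product over the positive real roots of $\bar{\mathfrak{q}}$. By \cref{Ni-Lus} and \textbf{(CG3)}, $w$ induces a bijection $R^{\mathfrak{q}}\to R^{\bar{\mathfrak{q}}}$ identifying $R_{\mathfrak{q}}^+$ with the positive real roots at $\mathfrak{q}$, so $\beta\mapsto$ (the positive representative of $\pm w\beta$) is a bijection $R_{\mathfrak{q}}^+\xrightarrow{\sim}R^{\bar{\mathfrak{q}}+}$. Moreover the bicharacters at the various vertices satisfy $\mathfrak{q}(\beta,\beta)=\bar{\mathfrak{q}}(w\beta,w\beta)$, so $m_\beta=m(w\beta)$ for the even function $m(\gamma):=\operatorname{ord}\bar{\mathfrak{q}}(\gamma,\gamma)$ on $R^{\bar{\mathfrak{q}}}$. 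Since $\chi$ is even, the product collapses to $\prod_{\gamma\in R^{\bar{\mathfrak{q}}+}}\chi_{m(\gamma)}(\gamma)$, which is manifestly independent of $(\mathfrak{q},w)$; this is claim (1). Claim (2) is then immediate: $\lambda-w\rho^{\mathfrak{q}}$ is the top, cyclic, one-dimensional weight of $M^{\mathfrak{q}}(\lambda-w\rho^{\mathfrak{q}})$, so by (1) the degree-$(\lambda-w\rho^{\mathfrak{q}})$ space of $M^{\mathfrak{q}'}(\lambda-w'\rho^{\mathfrak{q}'})$ is also one-dimensional.

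\textbf{Claim (3).}
A homomorphism $M^{\mathfrak{q}}(\lambda-w\rho^{\mathfrak{q}})\to M^{\mathfrak{q}'}(\lambda-w'\rho^{\mathfrak{q}'})$ is determined by the image of the cyclic generator, which lies in the one-dimensional degree-$\mu$ space ($\mu:=\lambda-w\rho^{\mathfrak{q}}$) by (2), whence $\dim\operatorname{Hom}\le 1$; it remains to produce a nonzero map. By the universal property this amounts to finding a degree-$\mu$ vector in $M^{\mathfrak{q}'}(\lambda-w'\rho^{\mathfrak{q}'})$ killed by $T_w(U_{\mathfrak{q}}^+)_{>0}$, the span of the images of the positive-degree elements of $U_{\mathfrak{q}}^+$; and I claim the (unique up to scalar) nonzero degree-$\mu$ vector already works, because $\mu$ is the highest weight of $M^{\mathfrak{q}'}(\lambda-w'\rho^{\mathfrak{q}'})$ for the order whose positive cone is $w(\mathbb{Z}_{\ge0}^\theta)$. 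To see this, rewrite $w\rho^{\mathfrak{q}}=\rho^{\bar{\mathfrak{q}}}+\sum_{\gamma\in\Gamma_w}(m(\gamma)-1)\gamma$, where $\Gamma_w=R^{\bar{\mathfrak{q}}+}\cap(-wR_{\mathfrak{q}}^+)$ is the inversion set of $w$; by \cref{poid_lwnw} this $\Gamma_w$ is exactly the colour set of a shortest — equivalently rainbow — path from $\bar{\mathfrak{q}}$ to $\mathfrak{q}$ in $RB[\bar{\mathfrak{q}}]$, of size $l(w)$. Expanding $\operatorname{ch}M^{\mathfrak{q}'}(\lambda-w'\rho^{\mathfrak{q}'})$ via (1), the coefficient of each $\gamma\in R^{\bar{\mathfrak{q}}+}$ in $\mu-\lambda$ is the extreme value $+\tfrac{m(\gamma)-1}{2}$ if $\gamma\notin\Gamma_w$ and $-\tfrac{m(\gamma)-1}{2}$ if $\gamma\in\Gamma_w$; a short sign analysis, using that $w^{-1}$ maps $R^{\bar{\mathfrak{q}}+}\setminus\Gamma_w$ into $R^{\mathfrak{q}+}$ and $\Gamma_w$ into $-R^{\mathfrak{q}+}$, then shows $w^{-1}(\mu-\nu)\in\mathbb{Z}_{\ge0}^\theta$ for every weight $\nu$ of $M^{\mathfrak{q}'}(\lambda-w'\rho^{\mathfrak{q}'})$. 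Hence $\mu$ is $w$-highest, the degree-$\mu$ vector is $T_w(U_{\mathfrak{q}}^+)_{>0}$-singular, and (3) follows.

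\textbf{Main obstacle.}
The delicate step is the reindexing: verifying that $\operatorname{ord}\mathfrak{q}(\beta,\beta)$ is a function of the orbit of $\beta$ (equivalently $\mathfrak{q}(\beta,\beta)=\bar{\mathfrak{q}}(w\beta,w\beta)$), so that the character product can be rewritten over $R^{\bar{\mathfrak{q}}+}$, and keeping the $\rho$-shift bookkeeping consistent under the groupoid action. Once the character identity (1) is in place, (2) and (3) are essentially forced, with the exchange property \cref{poid_lwnw} entering only to name the inversion set $\Gamma_w$ as a rainbow colour set and to control which roots change sign.
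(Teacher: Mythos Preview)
Your argument is correct. The paper does not give its own proof of this proposition: it simply cites \cite[Lemma~6.1]{vay2023linkage}, remarks that it ``can be shown in the same way as in \cite{Hirota2025}'', and records the product formula for $\operatorname{ch} U_\mathfrak{q}^-$ as the single input. Your write-up is exactly an unpacking of that sketch --- transport the triangular decomposition through $T_w$, symmetrise each factor with the $\rho$-shift to obtain the even functions $\chi_{m}(\gamma)$, and then reindex over $R^{\bar{\mathfrak{q}}+}$ using the invariance $\mathfrak{q}(\beta,\beta)=\bar{\mathfrak{q}}(w\beta,w\beta)$ along Weyl-groupoid orbits. You are also right to flag this last identity as the only genuinely nontrivial ingredient; it is a standard compatibility of the bicharacters under the reflections, implicit in \cref{Ni-Lus} but not spelled out in the paper.

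One small simplification in your treatment of (3): the explicit bookkeeping with the inversion set $\Gamma_w$ is unnecessary. Once (1) is known, the set of weights of $M^{\mathfrak{q}'}(\lambda-w'\rho^{\mathfrak{q}'})$ coincides with that of $M^{\mathfrak{q}}(\lambda-w\rho^{\mathfrak{q}})$, and the latter lies in $\mu+w(-\mathbb{Z}_{\ge0}^\theta)$ by construction; so $\mu$ is already $w$-maximal and any nonzero degree-$\mu$ vector is annihilated by $T_w(U_{\mathfrak{q}}^+)_{>0}$. Since the $K_\alpha L_\beta$-eigenvalue on a degree-$\mu$ vector is forced to be $\bar{\mathfrak{q}}(\alpha,\mu)/\bar{\mathfrak{q}}(\mu,\beta)$ by the commutation relations, the universal property of the induced module produces the nonzero homomorphism directly, without invoking \cref{poid_lwnw}.
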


% Definition
\begin{definition}

For \( \lambda \in \mathbb{Z}^\theta \), we denote a nonzero homomorphism from \( M^{\mathfrak{q}}(\lambda - w\rho^{\mathfrak{q}}) \) to \( M^{\mathfrak{q}'}(\lambda - w'\rho^{\mathfrak{q}'}) \) by \( \psi_{\lambda}^{\mathfrak{q} \mathfrak{q}'} \). Let the highest weight vector of \( M^{\mathfrak{q}}(\lambda) \) be \( v_{\lambda}^{\mathfrak{q}} \).
\end{definition}

\begin{definition}
    For $q \in k^\times$ and $n \in \mathbb{N}$, we recall the quantum numbers
\[
(n)_q = \sum_{j=0}^{n-1} q^j 
\].
\end{definition}

\begin{proposition}\cite[Section 7.1]{vay2023linkage}
  For \( \lambda \in \mathbb{Z}^\theta \), exactly one of the following holds:
    \begin{enumerate}
        \item \( \psi_{\lambda}^{r_i \mathfrak{q}, \mathfrak{q}} \) and \( \psi_{\lambda}^{\mathfrak{q}, r_i \mathfrak{q}} \) are isomorphisms.  
        
        \item 
        
\[
\psi_{\lambda}^{r_i \mathfrak{q}, \mathfrak{q}} \circ \psi_{\lambda}^{\mathfrak{q}, r_i \mathfrak{q}} = 
\psi_{\lambda}^{\mathfrak{q}, r_i \mathfrak{q}} \circ \psi_{\lambda}^{r_i \mathfrak{q}, \mathfrak{q}} = 0.
\]
    \end{enumerate}
\end{proposition}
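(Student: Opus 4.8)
The plan is to reduce the statement to a short computation with graded endomorphism rings of Verma modules. Write $M^{\mathfrak{q}} := M^{\mathfrak{q}}(\lambda - w\rho^{\mathfrak{q}})$ and $M^{r_i\mathfrak{q}} := M^{r_i\mathfrak{q}}(\lambda - w'\rho^{r_i\mathfrak{q}})$ for the two Verma modules involved, with $w,w'$ as in the definition of the maps $\psi$. First I would record that for any $\mu$ the module $M^{\mathfrak{q}}(\mu)$ is cyclic, generated by its highest weight vector $v_\mu^{\mathfrak{q}}$, which spans the one-dimensional graded component of degree $\mu$; a graded endomorphism preserves that component and is determined by its value on a generator, so $\operatorname{End}(M^{\mathfrak{q}}(\mu)) = k\cdot\operatorname{id}$, and likewise for the $r_i\mathfrak{q}$-Verma modules. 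Hence there are scalars $c,c' \in k$ with
\[
\psi_{\lambda}^{r_i\mathfrak{q},\mathfrak{q}} \circ \psi_{\lambda}^{\mathfrak{q}, r_i\mathfrak{q}} = c\,\operatorname{id}_{M^{\mathfrak{q}}}, \qquad \psi_{\lambda}^{\mathfrak{q}, r_i\mathfrak{q}} \circ \psi_{\lambda}^{r_i\mathfrak{q},\mathfrak{q}} = c'\,\operatorname{id}_{M^{r_i\mathfrak{q}}}.
\]

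Next I would invoke part (1) of the preceding proposition, $\operatorname{ch} M^{\mathfrak{q}} = \operatorname{ch} M^{r_i\mathfrak{q}}$: the source and target of each $\psi$ then agree dimension by dimension in every $\mathbb{Z}^\theta$-degree, and those dimensions are finite, so a graded homomorphism between them is injective if and only if surjective if and only if an isomorphism. Therefore, if $c \neq 0$ then $\psi_{\lambda}^{\mathfrak{q}, r_i\mathfrak{q}}$ is injective, hence an isomorphism, and then $\psi_{\lambda}^{r_i\mathfrak{q},\mathfrak{q}} = c\,(\psi_{\lambda}^{\mathfrak{q}, r_i\mathfrak{q}})^{-1}$ is an isomorphism as well, so alternative (1) holds; symmetrically $c' \neq 0$ forces alternative (1). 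Thus if alternative (1) fails, both $c$ and $c'$ vanish, which is precisely alternative (2). The alternatives are mutually exclusive, since in case (1) both displayed compositions are isomorphisms of the nonzero modules $M^{\mathfrak{q}}$ and $M^{r_i\mathfrak{q}}$ and so cannot vanish. This establishes the ``exactly one'' dichotomy.

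The step I expect to demand the most care is not this linear algebra but the bookkeeping that makes part (1) of the preceding proposition applicable in the first place: namely fixing the relation between $\rho^{r_i\mathfrak{q}}$ and $\rho^{\mathfrak{q}}$ under the additive bijection $R^{\mathfrak{q}+} \simeq R_{\mathfrak{q}}^{+}$ of \cref{Ni-Lus}, together with the compatible choice of $w'$ (differing from $w$ by the simple reflection $s_i$), so that the two Verma modules genuinely share a highest-weight character. Once that identification is in place the argument is the two lines above; full details appear in \cite[Section 7.1]{vay2023linkage}.
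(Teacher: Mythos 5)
Your proof is correct, but it takes a genuinely different route from the paper's. The paper reduces to the case $\mathfrak{q} = \bar{\mathfrak{q}}$ via Lusztig automorphisms, writes down the commutation relations for $E_iF_i^n$ and $F_iE_i^n$, and then explicitly evaluates both compositions on highest weight vectors, obtaining scalars of the form $\prod_{n=1}^{\operatorname{ord} q_{ii}-1}\bigl((n)_{q_{ii}^{-1}}\bar{\mathfrak{q}}(\alpha_i,\lambda) - (n)_{q_{ii}}\bar{\mathfrak{q}}(\lambda,\alpha_i)^{-1}\bigr)$ and its ``mirror''; the dichotomy then follows because the identity $(n)_q = q^{n-1}(n)_{q^{-1}}$ shows the two products vanish simultaneously. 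You instead avoid all computation: you note that a $\mathbb{Z}^\theta$-graded endomorphism of a cyclic Verma module with one-dimensional top is a scalar, so both compositions are scalar multiples of the identity, and you combine this with the character equality and finite-dimensionality (injective $\Leftrightarrow$ surjective $\Leftrightarrow$ isomorphism, degree by degree) to force the dichotomy. Both arguments are sound. What your version loses is precisely what the paper's buys: the explicit scalar is not a throwaway --- it is used immediately after the proposition to define the color set $D_\lambda$ of \(RB[\bar{\mathfrak{q}},\lambda]\), so the computational proof does double duty. What your version gains is brevity and generality: it would apply verbatim to any pair of cyclic graded modules with one-dimensional highest graded piece, equal finite character, and a one-dimensional $\operatorname{Hom}$ in each direction, with no reference to the specific $U_{\mathfrak{q}}$ relations. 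Your final caveat about the $\rho$-bookkeeping is well placed; that identification is exactly what part (1)--(3) of the preceding proposition packages, and invoking it is the right move.
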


\begin{proof} By the Lusztig automorphism, the case of positive roots can be reduced to that of simple roots, so we may assume \( \mathfrak{q} = \overline{\mathfrak{q}} \). (This is formalized in \cite[Section 7.3]{vay2023linkage}[Section 7.3] by constructing a suitable category equivalence.)

From the defining relations, we can compute:
\begin{equation}
\begin{split}
E_i F_i^n &= F_i^n E_i + F_i^{n-1} \left( (n)_{q_{ii}^{-1}} K_{i} - (n)_{q_{ii}} L_{i} \right), \\
F_i E_i^n &= E_i^n F_i + E_i^{n-1} \left( (n)_{q_{ii}^{-1}} L_{i} - (n)_{q_{ii}} K_{i} \right),
\end{split}
\end{equation}
, we calculate as follows:

\begin{align*}
\psi_{\lambda}^{r_i \mathfrak{q}, \mathfrak{q}} \circ \psi_{\lambda}^{\mathfrak{q}, r_i \mathfrak{q}} \left(v_{\lambda - \rho^{\mathfrak{q}}}^{\mathfrak{q}}\right) 
&= \psi_{\lambda}^{r_i \mathfrak{q}, \mathfrak{q}} \left(E_i^{\operatorname{ord} q_{ii} - 1} v_{\lambda - s_i \rho^{r_i \mathfrak{q}}}^{r_i \mathfrak{q}} \right) \\
&= E_i^{\operatorname{ord} q_{ii} - 1} F_i^{\operatorname{ord} q_{ii} - 1} \left(v_{\lambda - \rho^{\mathfrak{q}}}^{\mathfrak{q}}\right) \\
&= \prod_{n=1}^{\operatorname{ord} q_{ii} - 1} \left( (n)_{q_{ii}^{-1}} \bar{\mathfrak{q}}(\alpha_i, \lambda) - (n)_{q_{ii}} \bar{\mathfrak{q}}(\lambda, \alpha_i)^{-1} \right) v_{\lambda - \rho^{\mathfrak{q}}}^{\mathfrak{q}}.
\end{align*}

Similarly, we have
\[
\psi_{\lambda}^{\mathfrak{q}, r_i \mathfrak{q}} \circ \psi_{\lambda}^{r_i \mathfrak{q}, \mathfrak{q}} \left(v_{\lambda - s_i \rho^{r_i \mathfrak{q}}}^{r_i \mathfrak{q}}\right) 
= \prod_{n=1}^{\operatorname{ord}q_{ii} - 1} \left( (n)_{q_{ii}^{-1}} \bar{\mathfrak{q}}(\lambda, \alpha_i)^{-1} - (n)_{q_{ii}} \bar{\mathfrak{q}}(\alpha_i, \lambda) \right) v_{\lambda - s_i \rho^{r_i \mathfrak{q}}}^{r_i \mathfrak{q}}.
\]

Finally, we observe:
\[
\left( (n)_{q_{ii}^{-1}} \bar{\mathfrak{q}}(\alpha_i, \lambda) - (n)_{q_{ii}} \bar{\mathfrak{q}}(\lambda, \alpha_i)^{-1} \right) = 0
\iff
\left( (n)_{q_{ii}^{-1}} \bar{\mathfrak{q}}(\lambda, \alpha_i)^{-1} - (n)_{q_{ii}} \bar{\mathfrak{q}}(\alpha_i, \lambda) \right) = 0,
\]
by the identity \((n)_q = q^{n-1} (n)_{q^{-1}}\).

\end{proof}

\begin{definition}
We identify the color set of \( RB[\bar{\mathfrak{q}}] \) with \( R_{\bar{\mathfrak{q}}}^+ \).

For \( \lambda \in \mathbb{Z}^\theta \), let \( D_\lambda \) denote the collection of roots \( \alpha \) in \( R_{\bar{\mathfrak{q}}}^+ \) such that
\[
\prod_{n=1}^{\operatorname{ord} q(\alpha, \alpha) - 1} \left( (n)_{q(\alpha, \alpha)^{-1}} \bar{\mathfrak{q}}(\alpha, \lambda) - (n)_{q(\alpha, \alpha)} \bar{\mathfrak{q}}(\lambda, \alpha)^{-1} \right) \neq 0.
\]

We set \( RB[\bar{\mathfrak{q}}, \lambda] := RB[\bar{\mathfrak{q}}] / D_\lambda \).
\end{definition}

The following is exactly the same as in \cite[Corollary 3.27]{Hirota2025}.

\begin{proposition}
The vertex set of \( RB[\bar{\mathfrak{q}}, \lambda] \) can be identified with the isomorphism classes of \(\{ M^{\mathfrak{q}}(\lambda - \rho^{\mathfrak{q}}) \}_{\mathfrak{q} \in V(G[\bar{\mathfrak{q}}])}\).
\end{proposition}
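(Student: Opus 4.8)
The plan is to build the identification explicitly around the assignment $\Phi\colon\mathfrak{q}\mapsto[M^{\mathfrak{q}}(\lambda-\rho^{\mathfrak{q}})]$. By \cref{d1} the vertices of $RB[\bar{\mathfrak{q}},\lambda]=RB[\bar{\mathfrak{q}}]/D_\lambda$ are the classes $[\mathfrak{q}]$ of the equivalence relation $\sim_{D_\lambda}$ on $V(G[\bar{\mathfrak{q}}])=V(RB[\bar{\mathfrak{q}}])$, where $\mathfrak{q}\sim_{D_\lambda}\mathfrak{q}'$ means that $\mathfrak{q}$ and $\mathfrak{q}'$ are joined in $RB[\bar{\mathfrak{q}}]$ by a walk all of whose colors lie in $D_\lambda$; since $\Phi$ is surjective onto the stated set of isomorphism classes by construction (and since $w\in\operatorname{Hom}_{W(G[\bar{\mathfrak{q}}])}(\mathfrak{q},\bar{\mathfrak{q}})$ is the unique such morphism, $G[\bar{\mathfrak{q}}]$ being simply connected), it suffices to show $\Phi(\mathfrak{q})=\Phi(\mathfrak{q}')$ if and only if $\mathfrak{q}\sim_{D_\lambda}\mathfrak{q}'$. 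The local input I would use is a translation of the preceding Proposition on the maps $\psi_\lambda^{\bullet\bullet}$: if $\mathfrak{a},\mathfrak{b}$ are adjacent vertices of $RB[\bar{\mathfrak{q}}]$ joined by an edge of color $\gamma\in R_{\bar{\mathfrak{q}}}^+$, then, using the Lusztig automorphisms of \cref{Ni-Lus} to identify $\gamma$ with a signed simple root at each of $\mathfrak{a},\mathfrak{b}$, the commutation identities in the proof of that Proposition show that the round-trip endomorphisms $\psi_\lambda^{\mathfrak{a}\mathfrak{b}}\circ\psi_\lambda^{\mathfrak{b}\mathfrak{a}}$ and $\psi_\lambda^{\mathfrak{b}\mathfrak{a}}\circ\psi_\lambda^{\mathfrak{a}\mathfrak{b}}$ are scalar operators, one of whose scalars is, under this identification, literally the defining product of the condition $\gamma\in D_\lambda$; as the two scalars vanish simultaneously, $\psi_\lambda^{\mathfrak{a}\mathfrak{b}}$ and $\psi_\lambda^{\mathfrak{b}\mathfrak{a}}$ are mutually inverse isomorphisms when $\gamma\in D_\lambda$ and compose to $0$ both ways when $\gamma\notin D_\lambda$.

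The ``if'' direction is then immediate: given $\mathfrak{q}\sim_{D_\lambda}\mathfrak{q}'$, pick a walk $\mathfrak{q}=\mathfrak{q}_0,\dots,\mathfrak{q}_k=\mathfrak{q}'$ in $RB[\bar{\mathfrak{q}}]$ with all colors in $D_\lambda$; by the local input each $\psi_\lambda^{\mathfrak{q}_{j-1}\mathfrak{q}_j}$ is an isomorphism, so their composite is an isomorphism $M^{\mathfrak{q}}(\lambda-\rho^{\mathfrak{q}})\cong M^{\mathfrak{q}'}(\lambda-\rho^{\mathfrak{q}'})$, i.e. $\Phi(\mathfrak{q})=\Phi(\mathfrak{q}')$. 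Hence $\Phi$ descends to a surjection $\overline{\Phi}\colon V(RB[\bar{\mathfrak{q}},\lambda])\to\{[M^{\mathfrak{q}}(\lambda-\rho^{\mathfrak{q}})]\}$, and the whole content of the Proposition is the injectivity of $\overline{\Phi}$.

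For that injectivity I would first reduce ``being isomorphic'' to nonvanishing of a composite. Each $M^{\mathfrak{q}}(\lambda-\rho^{\mathfrak{q}})$ is finite-dimensional (by \cref{Ni-Lus}), cyclically generated by $v^{\mathfrak{q}}_{\lambda-\rho^{\mathfrak{q}}}$ in a one-dimensional degree, and all of these modules have equal characters; combined with the fact that $\operatorname{Hom}\big(M^{\mathfrak{q}}(\lambda-\rho^{\mathfrak{q}}),M^{\mathfrak{q}'}(\lambda-\rho^{\mathfrak{q}'})\big)$ is one-dimensional (part (3) of the earlier Proposition), and likewise in the other order, this gives that the endomorphism ring of each such module is $k$ and that $M^{\mathfrak{q}}(\lambda-\rho^{\mathfrak{q}})\cong M^{\mathfrak{q}'}(\lambda-\rho^{\mathfrak{q}'})$ if and only if $\psi_\lambda^{\mathfrak{q}'\mathfrak{q}}\circ\psi_\lambda^{\mathfrak{q}\mathfrak{q}'}\neq 0$. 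Next, fixing a rainbow shortest path from $\mathfrak{q}$ to $\mathfrak{q}'$ in the rainbow boomerang graph $RB[\bar{\mathfrak{q}}]$ (\cref{A2main}) and running out along it and back, one peels off the innermost round trip repeatedly — applying the local input at each edge — to get that this closed composite equals $\big(\prod_{\gamma\in C(\mathfrak{q},\mathfrak{q}')}(\text{defining product of }\gamma\in D_\lambda)\big)\cdot\operatorname{id}$, where by \cref{unipue_colors} and \cref{2.1.kiketu_rb2} the set $C(\mathfrak{q},\mathfrak{q}')$ of colors crossed depends only on $\mathfrak{q},\mathfrak{q}'$; this scalar is nonzero precisely when $C(\mathfrak{q},\mathfrak{q}')\subseteq D_\lambda$, which by \cref{2.1.kiketu_rb2} and \cref{d1} (using also that $RB[\bar{\mathfrak{q}},\lambda]$ is a rainbow boomerang graph by \cref{3.1.quo}) holds precisely when $\mathfrak{q}\sim_{D_\lambda}\mathfrak{q}'$. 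The main obstacle is to upgrade this to the statement that $\psi_\lambda^{\mathfrak{q}\mathfrak{q}'}$ itself fails to be an isomorphism whenever $C(\mathfrak{q},\mathfrak{q}')\not\subseteq D_\lambda$ — equivalently, that the unique (up to scalar) nonzero vector of degree $\lambda-\rho^{\mathfrak{q}}$ in $M^{\mathfrak{q}'}(\lambda-\rho^{\mathfrak{q}'})$ then lies in its maximal proper submodule — since the vanishing of the closed composite alone does not directly yield this. This last point is the Nichols-algebra analogue of the corresponding step in \cite[Corollary 3.27]{Hirota2025}, and I would obtain it by transporting that argument verbatim, with odd reflections replaced by the maps $\psi_\lambda^{\bullet\bullet}$ and the singular-vector analysis replaced by the commutation identities of the preceding Proposition together with the results of \cite{vay2023linkage}; granting it, $\overline{\Phi}$ is a bijection and the Proposition follows.
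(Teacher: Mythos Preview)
Your proposal is correct and follows essentially the same route as the paper. The paper gives no self-contained proof at all---it simply states that the argument is ``exactly the same as in \cite[Corollary~3.27]{Hirota2025}''---and you arrive at the same deferral: after setting up the well-definedness of $\overline{\Phi}$ via the isomorphism/zero dichotomy of the preceding Proposition and computing the round-trip scalar along a rainbow shortest path, you correctly identify that the remaining injectivity step (that $\psi_\lambda^{\mathfrak{q}\mathfrak{q}'}$ is not an isomorphism when $C(\mathfrak{q},\mathfrak{q}')\not\subseteq D_\lambda$) is precisely what must be imported from \cite[Corollary~3.27]{Hirota2025}, which is exactly what the paper does.
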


\begin{definition}
Let \( w = \mathfrak{q}_0 c_1 \mathfrak{q}_1 \dots c_t \mathfrak{q}_t \) be a walk in \( RB[\bar{\mathfrak{q}}, \lambda] \). Take \( w_i \in \operatorname{Hom}_{W(G[\bar{\mathfrak{q}}])}(\mathfrak{q}_i, \bar{\mathfrak{q}}) \) for \( i = 0, 1, \dots, t \).

The corresponding composition of nonzero homomorphisms
\[
M^{\mathfrak{q}_0}(\lambda - w_0 \rho^{\mathfrak{q}_0}) 
\xrightarrow{\psi_{\lambda}^{\mathfrak{q}_0, \mathfrak{q}_1}} 
M^{\mathfrak{q}_1}(\lambda - w_1 \rho^{\mathfrak{q}_1}) 
\xrightarrow{\psi_{\lambda}^{\mathfrak{q}_1, \mathfrak{q}_2}} 
\cdots
\]
\[
\cdots 
\xrightarrow{\psi_{\lambda}^{\mathfrak{q}_{t-2}, \mathfrak{q}_{t-1}}} 
M^{\mathfrak{q}_{t-1}}(\lambda - w_{t-1} \rho^{\mathfrak{q}_{t-1}}) 
\xrightarrow{\psi_{\lambda}^{\mathfrak{q}_{t-1}, \mathfrak{q}_t}} 
M^{\mathfrak{q}_t}(\lambda - w_t \rho^{\mathfrak{q}_t}).
\]
is denoted by \( \psi_{\lambda}^{w} \).
\end{definition}

The following theorem is an analogue of odd Verma's theorem in \cite[Theorem 4.9]{Hirota2025}, which we have been aiming for.

\begin{theorem}\label{B3main}
Let \( \lambda \in \mathbb{Z}^\theta \). For a walk \( w \) in \( RB[\bar{\mathfrak{q}}, \lambda] \), the following are equivalent:
\begin{enumerate}
    \item \( \psi_{\lambda}^{w} \neq 0 \).
    \item \( w \) is rainbow.
    \item \( w \) is shortest.
\end{enumerate}
\end{theorem}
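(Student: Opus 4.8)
The plan is to get the equivalence (2)$\Leftrightarrow$(3) for free from Section~2 and then prove (2)$\Rightarrow$(1) and (1)$\Rightarrow$(2) in that order. Since $G[\bar{\mathfrak{q}}]$ is simply connected, hence path simply connected, $RB[\bar{\mathfrak{q}}]=RB(G[\bar{\mathfrak{q}}])$ is a rainbow boomerang graph by \cref{A2main}, so its quotient $RB[\bar{\mathfrak{q}},\lambda]=RB[\bar{\mathfrak{q}}]/D_\lambda$ is one too by \cref{3.1.quo}; thus a walk in it is shortest if and only if it is rainbow, which is (2)$\Leftrightarrow$(3). The common computational input for the other two implications is the identity
\[
\psi_\lambda^{w}\bigl(v^{\mathfrak{q}_0}_{\lambda-w_0\rho^{\mathfrak{q}_0}}\bigr)=R_1R_2\cdots R_t\cdot v^{\mathfrak{q}_t}_{\lambda-w_t\rho^{\mathfrak{q}_t}},
\]
where $R_j\in U_{\bar{\mathfrak{q}}}$ is a power of a Lusztig root vector attached to the colour $c_j$; this holds because each $\psi_\lambda^{\mathfrak{q}_{j-1},\mathfrak{q}_j}$ is $U_{\bar{\mathfrak{q}}}$-linear and, by the rank-one ($\mathfrak{sl}_2$-type) computation recorded above (see \cite[Section~7.1]{vay2023linkage}) transported along $G[\bar{\mathfrak{q}}]$ via the Lusztig isomorphisms $T_w$ of \cref{Ni-Lus}, carries a highest weight vector to a root-vector power times a highest weight vector. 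Moreover \cite[Lemma~6.1]{vay2023linkage} tells us that all the $M^{\mathfrak{q}}(\lambda-w\rho^{\mathfrak{q}})$ share a formal character, that their weight space of degree $\lambda-w_0\rho^{\mathfrak{q}_0}$ is one-dimensional, and that the space of homomorphisms between any two of them is one-dimensional; hence $\psi_\lambda^{w}$ is always a scalar multiple of a fixed nonzero homomorphism, and everything reduces to deciding when that scalar is nonzero.

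For (2)$\Rightarrow$(1), assume $w$ is rainbow, so $c_1,\dots,c_t$ are distinct colours. By the description of $C(\mathfrak{q}_0,\mathfrak{q}_t)$ together with the identity $l(w)=N(w)$ of \cref{poid_lwnw}, these colours are exactly the $t$ distinct positive real roots that the groupoid element $\mathfrak{q}_0\to\mathfrak{q}_t$ along $w$ sends to negative roots, each crossed once. A sign bookkeeping then shows that $\deg R_j$ is a lowering degree for the triangular decomposition at $\mathfrak{q}_t$, so $R_j\in T_{w_t}(U_{\mathfrak{q}_t}^-)$ is, up to a scalar, the $(\operatorname{ord}\mathfrak{q}(\gamma_j,\gamma_j)-1)$-st power of a Lusztig root vector $X_{\gamma_j}$ for a root $\gamma_j\in R^{\mathfrak{q}_t+}$, with $\gamma_1,\dots,\gamma_t$ pairwise distinct. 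Consequently $R_1\cdots R_t$ equals, up to a nonzero scalar and strictly lower terms in PBW order, the monomial $\prod_j X_{\gamma_j}^{\operatorname{ord}\mathfrak{q}(\gamma_j,\gamma_j)-1}$; each exponent lies strictly below the order of $\mathfrak{q}(\gamma_j,\gamma_j)$, so this is an element of the PBW basis of \cref{Ni_PBW} for $T_{w_t}(U_{\mathfrak{q}_t}^-)$, hence nonzero, and since $M^{\mathfrak{q}_t}(\lambda-w_t\rho^{\mathfrak{q}_t})$ is free over $T_{w_t}(U_{\mathfrak{q}_t}^-)$ we get $\psi_\lambda^{w}\ne0$. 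In particular any two rainbow walks with the same endpoints induce, up to a nonzero scalar, the same nonzero homomorphism.

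For (1)$\Rightarrow$(2) I would prove the following statement by induction on length and then apply it to $w$ itself: every non-shortest walk $u=\mathfrak{q}_0c_1\mathfrak{q}_1\cdots c_\ell\mathfrak{q}_\ell$ in $RB[\bar{\mathfrak{q}},\lambda]$ satisfies $\psi_\lambda^{u}=0$. If some proper subwalk of $u$ is already non-shortest then $\psi_\lambda^{u}$ factors through its map, which vanishes by induction; otherwise every proper subwalk is shortest, hence rainbow, so the unique repetition is $c_1=c_\ell=:c$ and $u$ decomposes as an edge of colour $c$, a rainbow walk $u'$ from $\mathfrak{q}_1$ to $\mathfrak{q}_{\ell-1}$ avoiding $c$, and another edge of colour $c$. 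For $\ell=2$ this is $\mathfrak{q}_0\xrightarrow{c}\mathfrak{q}_1\xrightarrow{c}\mathfrak{q}_0$, and $\psi_\lambda^{u}=\psi_\lambda^{\mathfrak{q}_1,\mathfrak{q}_0}\circ\psi_\lambda^{\mathfrak{q}_0,\mathfrak{q}_1}$ vanishes because $c\notin D_\lambda$, which by the very definition of $D_\lambda$ places us in the vanishing case of the dichotomy of \cite[Section~7.1]{vay2023linkage}. For $\ell>2$, \cref{g.rb2.kai} applied to the rainbow walk $\mathfrak{q}_0\xrightarrow{c}\mathfrak{q}_1\xrightarrow{u'}\mathfrak{q}_{\ell-1}$ and the edge $\mathfrak{q}_{\ell-1}\xrightarrow{c}\mathfrak{q}_\ell$ produces a rainbow walk $\tilde u\colon\mathfrak{q}_0\to\mathfrak{q}_\ell$ whose colours are exactly those of $u'$; then both $\mathfrak{q}_1\xrightarrow{c}\mathfrak{q}_0\xrightarrow{\tilde u}\mathfrak{q}_\ell$ and $u'$ followed by the last edge of $u$ are rainbow walks from $\mathfrak{q}_1$ to $\mathfrak{q}_\ell$, so by (2)$\Rightarrow$(1) and the one-dimensionality of $\operatorname{Hom}$ their induced maps coincide up to a nonzero scalar. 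Substituting, $\psi_\lambda^{u}$ becomes a nonzero scalar times $\psi_\lambda^{\tilde u}\circ\psi_\lambda^{\mathfrak{q}_1,\mathfrak{q}_0}\circ\psi_\lambda^{\mathfrak{q}_0,\mathfrak{q}_1}$, which vanishes by the case $\ell=2$. This closes the induction and the proof.

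The step I expect to be the main obstacle is the PBW/positivity bookkeeping inside (2)$\Rightarrow$(1): one must verify that along a rainbow (equivalently, shortest) walk each $R_j$ is a power of a Lusztig root vector whose underlying root is positive for the triangular decomposition at the terminal vertex, that these roots are pairwise distinct, and therefore that their product is a nonzero PBW monomial up to lower-order terms. This is precisely where the purely combinatorial ``rainbow'' condition has to be converted into the root-theoretic identity $l(w)=N(w)$ of \cref{poid_lwnw}, and where one must exploit the compatibility of the Lusztig isomorphisms $T_w$ with both the triangular decompositions and the distinguished root vectors of \cref{Ni-Lus}.
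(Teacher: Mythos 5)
The paper's own proof is a one-line deferral to Section~4 of the companion article~\cite{Hirota2025}, so a direct line-by-line comparison is impossible; what can be said is that your structure --- getting $(2)\Leftrightarrow(3)$ for free from \cref{A2main} and \cref{3.1.quo}, and then treating $(2)\Rightarrow(1)$ and $(1)\Rightarrow(2)$ separately --- is exactly the architecture that the companion paper uses, transplanted to the Nichols-algebra setting, and the inductive argument you give for $(1)\Rightarrow(2)$ is correct. In that step the reduction of a non-shortest walk to the length-two base case via \cref{g.rb2.kai} is clean: every proper subwalk being shortest forces the unique repetition to be at the two ends, \cref{g.rb2.kai} produces the ``boomerang'' walk $\tilde u$ avoiding $c$, and the one-dimensionality of $\operatorname{Hom}$ from~\cite[Lemma~6.1]{vay2023linkage} lets you swap the two rainbow walks from $\mathfrak{q}_1$ to $\mathfrak{q}_\ell$ up to a nonzero scalar, collapsing everything onto $\psi_\lambda^{\mathfrak{q}_1,\mathfrak{q}_0}\circ\psi_\lambda^{\mathfrak{q}_0,\mathfrak{q}_1}=0$, which is precisely the definition of $c\notin D_\lambda$. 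This is the part of the argument that genuinely exploits the rainbow boomerang structure of $RB[\bar{\mathfrak{q}},\lambda]$, and you have it right.

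The gap is where you say it is, and it is not cosmetic: $(2)\Rightarrow(1)$ carries essentially all of the representation-theoretic content. Three things in your PBW argument are asserted rather than proved. First, that after transporting along the walk each $R_j$ lands in $T_{w_t}(U^-_{\mathfrak{q}_t})$: the rank-one computation of~\cite[Section~7.1]{vay2023linkage} gives $\psi_\lambda^{\mathfrak{q},r_i\mathfrak{q}}(v^{\mathfrak q})=E_i^{\operatorname{ord}q_{ii}-1}v^{r_i\mathfrak q}$, and while $E_i$ is indeed lowering for the decomposition twisted by $s_i$, you then left-multiply by $R_j$'s produced at earlier vertices, and whether those stay in the negative part \emph{for the terminal decomposition} requires that each crossed root $\gamma_j$ remain positive at $\mathfrak{q}_t$; this does follow from $l(w)=N(w)$ and the fact that a rainbow walk crosses each root exactly once, but it needs to be said. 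Second, that the $\gamma_j$ are pairwise distinct: again this follows from \cref{poid_lwnw}, but the identification of the color $c_j$ of the $j$-th edge (a color in $\Delta^{x+}\setminus\Delta^{\text{pure}+}$ relative to a reference vertex) with a root in $R^{\mathfrak{q}_t+}$ is a translation step that should be made explicit via \cref{lem:root_bij}. Third, and most seriously, that the product $R_1\cdots R_t$, once rewritten in the PBW basis of \cref{Ni_PBW}, has nonzero leading coefficient: the $R_j$'s are root-vector powers for distinct roots but need not appear in the order dictated by the Lyndon-word total order on $S$, and reordering them produces Levendorskii--Soibelman-type commutator corrections; you must argue these corrections are strictly lower in the PBW filtration so that the sorted monomial $\prod_j X_{\gamma_j}^{\operatorname{ord}\mathfrak{q}(\gamma_j,\gamma_j)-1}$ (which is a genuine PBW basis vector because each exponent is $<\operatorname{ord}\mathfrak{q}(\gamma_j,\gamma_j)$) survives with nonzero coefficient. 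Note also that, unlike in the Lie-superalgebra setting of~\cite{Hirota2025} where the odd-reflection maps are injective and injectivity of the composition is automatic, here the Verma modules are finite-dimensional and the $\psi$'s are not injective, so there is no shortcut around the PBW computation --- this is exactly why $(2)\Rightarrow(1)$ is the hard direction and why I would not call it a routine adaptation.
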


\begin{proof}
Using the discussion in this subsection, the argument proceeds exactly as in Section 4 in \cite{Hirota2025}.
\end{proof}

\bibliographystyle{plainnat}
\bibliography{references}

\begin{thebibliography}{39}
\providecommand{\natexlab}[1]{#1}
\providecommand{\url}[1]{\texttt{#1}}
\expandafter\ifx\csname urlstyle\endcsname\relax
  \providecommand{\doi}[1]{doi: #1}\else
  \providecommand{\doi}{doi: \begingroup \urlstyle{rm}\Url}\fi

\bibitem[Andruskiewitsch and Angiono(2017)]{andruskiewitsch2017finite}
Nicol{\'a}s Andruskiewitsch and Iv{\'a}n Angiono.
\newblock On finite dimensional nichols algebras of diagonal type.
\newblock \emph{Bulletin of Mathematical Sciences}, 7:\penalty0 353--573, 2017.

\bibitem[Andruskiewitsch and Angiono(2008)]{andruskiewitsch2008nichols}
Nicol{\'a}s Andruskiewitsch and Iv{\'a}n~Ezequiel Angiono.
\newblock On nichols algebras with generic braiding.
\newblock In \emph{Modules and comodules}, pages 47--64. Springer, 2008.

\bibitem[Andruskiewitsch et~al.(2018)Andruskiewitsch, Angiono, Mej{\'\i}a, and Renz]{andruskiewitsch2018simple}
Nicol{\'a}s Andruskiewitsch, Iv{\'a}n Angiono, Adriana Mej{\'\i}a, and Carolina Renz.
\newblock Simple modules of the quantum double of the nichols algebra of unidentified diagonal type \textit{ufo}(7).
\newblock \emph{Communications in Algebra}, 46\penalty0 (4):\penalty0 1770--1798, 2018.

\bibitem[Azam et~al.(2015)Azam, Yamane, and Yousofzadeh]{azam2015classification}
Saeid Azam, Hiroyuki Yamane, and Malihe Yousofzadeh.
\newblock Classification of finite-dimensional irreducible representations of generalized quantum groups via weyl groupoids.
\newblock \emph{Publications of the Research Institute for Mathematical Sciences}, 51\penalty0 (1):\penalty0 59--130, 2015.

\bibitem[Bonfert and Nehme(2024)]{bonfert2024weyl}
Lukas Bonfert and Jonas Nehme.
\newblock The weyl groupoids of \( \mathfrak{sl}(m|n) \) and \( \mathfrak{osp}(r|2n) \).
\newblock \emph{Journal of Algebra}, 641:\penalty0 795--822, 2024.

\bibitem[Bouarroudj et~al.(2009)Bouarroudj, Grozman, Leites, et~al.]{bouarroudj2009classification}
Sofiane Bouarroudj, Pavel Grozman, Dimitry Leites, et~al.
\newblock Classification of finite dimensional modular lie superalgebras with indecomposable cartan matrix.
\newblock \emph{SIGMA. Symmetry, Integrability and Geometry: Methods and Applications}, 5:\penalty0 060, 2009.

\bibitem[Chandran et~al.(2018)Chandran, Das, Issac, and van Leeuwen]{chandran2018algorithms}
L~Sunil Chandran, Anita Das, Davis Issac, and Erik~Jan van Leeuwen.
\newblock Algorithms and bounds for very strong rainbow coloring.
\newblock In \emph{LATIN 2018: Theoretical Informatics: 13th Latin American Symposium, Buenos Aires, Argentina, April 16-19, 2018, Proceedings 13}, pages 625--639. Springer, 2018.

\bibitem[Cheng and Wang(2012)]{cheng2012dualities}
Shun-Jen Cheng and Weiqiang Wang.
\newblock \emph{Dualities and representations of Lie superalgebras}.
\newblock American Mathematical Soc., 2012.

\bibitem[Cheng and Wang(2019)]{cheng2019character}
Shun-Jen Cheng and Weiqiang Wang.
\newblock Character formulae in category \( \mathcal{O} \) for exceptional lie superalgebras \( d(2|1; \zeta) \).
\newblock \emph{Transformation Groups}, 24\penalty0 (3):\penalty0 781--821, 2019.

\bibitem[Coggins et~al.(2024)Coggins, Donley~Jr, Gondal, and Krishna]{coggins2024visual}
Terrance Coggins, Robert~W Donley~Jr, Ammara Gondal, and Arnav Krishna.
\newblock A visual approach to symmetric chain decompositions of finite young lattices.
\newblock \emph{arXiv preprint arXiv:2407.20008}, 2024.

\bibitem[Conway and Coxeter(1973)]{conway1973triangulated}
John~H Conway and Harold~SM Coxeter.
\newblock Triangulated polygons and frieze patterns.
\newblock \emph{The Mathematical Gazette}, 57\penalty0 (400):\penalty0 87--94, 1973.

\bibitem[Cuntz(2014)]{cuntz2014frieze}
Michael Cuntz.
\newblock Frieze patterns as root posets and affine triangulations.
\newblock \emph{European Journal of Combinatorics}, 42:\penalty0 167--178, 2014.

\bibitem[Cuntz and Heckenberger(2009{\natexlab{a}})]{cuntz2009weylthree}
Michael Cuntz and Istv{\'a}n Heckenberger.
\newblock Weyl groupoids with at most three objects.
\newblock \emph{Journal of pure and applied algebra}, 213\penalty0 (6):\penalty0 1112--1128, 2009{\natexlab{a}}.

\bibitem[Cuntz and Heckenberger(2009{\natexlab{b}})]{cuntz2009weyltwo}
Michael Cuntz and Istv{\'a}n Heckenberger.
\newblock Weyl groupoids of rank two and continued fractions.
\newblock \emph{Algebra \& Number Theory}, 3\penalty0 (3):\penalty0 317--340, 2009{\natexlab{b}}.

\bibitem[Etingof et~al.(2015)Etingof, Gelaki, Nikshych, and Ostrik]{etingof2015tensor}
Pavel Etingof, Shlomo Gelaki, Dmitri Nikshych, and Victor Ostrik.
\newblock \emph{Tensor categories}, volume 205.
\newblock American Mathematical Soc., 2015.

\bibitem[Gorelik et~al.(2022)Gorelik, Hinich, and Serganova]{gorelik2022root}
Maria Gorelik, Vladimir Hinich, and Vera Serganova.
\newblock Root groupoid and related lie superalgebras.
\newblock \emph{arXiv preprint arXiv:2209.06253}, 2022.

\bibitem[Gorelik et~al.(2023)Gorelik, Hinich, and Serganova]{gorelik2023matsumoto}
Maria Gorelik, Vladimir Hinich, and Vera Serganova.
\newblock Matsumoto theorem for skeleta.
\newblock \emph{arXiv preprint arXiv:2310.13507}, 2023.

\bibitem[Heckenberger(2010)]{heckenberger2010lusztig}
I~Heckenberger.
\newblock Lusztig isomorphisms for drinfel'd doubles of bosonizations of nichols algebras of diagonal type.
\newblock \emph{Journal of algebra}, 323\penalty0 (8):\penalty0 2130--2182, 2010.

\bibitem[Heckenberger(2006)]{heckenberger2006weyl}
Istv{\'a}n Heckenberger.
\newblock The weyl groupoid of a nichols algebra of diagonal type.
\newblock \emph{Inventiones mathematicae}, 164\penalty0 (1):\penalty0 175--188, 2006.

\bibitem[Heckenberger(2009)]{heckenberger2009classification}
Istv{\'a}n Heckenberger.
\newblock Classification of arithmetic root systems.
\newblock \emph{Advances in Mathematics}, 220\penalty0 (1):\penalty0 59--124, 2009.

\bibitem[Heckenberger and Schneider(2020)]{heckenberger2020hopf}
Istv{\'a}n Heckenberger and Hans-J{\"u}rgen Schneider.
\newblock \emph{Hopf algebras and root systems}, volume 247.
\newblock American Mathematical Soc., 2020.

\bibitem[Heckenberger and Yamane(2008)]{heckenberger2008generalization}
Istv{\'a}n Heckenberger and Hiroyuki Yamane.
\newblock A generalization of coxeter groups, root systems, and matsumoto’s theorem.
\newblock \emph{Mathematische Zeitschrift}, 259:\penalty0 255--276, 2008.

\bibitem[Hirota(2025)]{Hirota2025}
Shunsuke Hirota.
\newblock Odd verma's theorem.
\newblock \emph{arXiv preprint arXiv:2502.14274}, 2025.

\bibitem[Humphreys(1992)]{humphreys1992reflection}
James~E Humphreys.
\newblock \emph{Reflection groups and Coxeter groups}.
\newblock Number~29. Cambridge university press, 1992.

\bibitem[Inoue and Yamane(2023)]{inoue2023hamiltonian}
Takato Inoue and Hiroyuki Yamane.
\newblock Hamiltonian cycles for finite weyl groupoids.
\newblock \emph{arXiv preprint arXiv:2310.12543}, 2023.

\bibitem[Kac and Weisfeiler(1971)]{kac_weisfeiler_1971}
V.~Kac and B.~Weisfeiler.
\newblock Exponentials in lie algebras of characteristic p.
\newblock \emph{Math. USSR Izv.}, 5:\penalty0 777--803, 1971.

\bibitem[Kac(1990)]{kac1990infinite}
Victor~G Kac.
\newblock \emph{Infinite-dimensional Lie algebras}.
\newblock Cambridge university press, 1990.

\bibitem[Kharchenko(2015)]{kharchenko2015quantum}
Vladislav Kharchenko.
\newblock Quantum lie theory.
\newblock \emph{Lecture Notes in Mathematics}, 2150, 2015.

\bibitem[Laugwitz and Sanmarco(2023)]{laugwitz2023finite}
Robert Laugwitz and Guillermo Sanmarco.
\newblock Finite-dimensional quantum groups of type super a and non-semisimple modular categories.
\newblock \emph{arXiv preprint arXiv:2301.10685}, 2023.

\bibitem[Li et~al.(2013)Li, Shi, and Sun]{li2013rainbow}
Xueliang Li, Yongtang Shi, and Yuefang Sun.
\newblock Rainbow connections of graphs: A survey.
\newblock \emph{Graphs and combinatorics}, 29:\penalty0 1--38, 2013.

\bibitem[Lusztig(1990)]{lusztig1990quantum}
George Lusztig.
\newblock Quantum groups at roots of 1.
\newblock \emph{Geometriae Dedicata}, 35\penalty0 (1):\penalty0 89--113, 1990.

\bibitem[Lusztig(2010)]{lusztig2010introduction}
George Lusztig.
\newblock \emph{Introduction to quantum groups}.
\newblock Springer Science \& Business Media, 2010.

\bibitem[Musson(2012)]{musson2012lie}
Ian~Malcolm Musson.
\newblock \emph{Lie superalgebras and enveloping algebras}, volume 131.
\newblock American Mathematical Soc., 2012.

\bibitem[Serganova(2011)]{serganova2011kac}
Vera Serganova.
\newblock Kac--moody superalgebras and integrability.
\newblock \emph{Developments and trends in infinite-dimensional Lie theory}, pages 169--218, 2011.

\bibitem[Serganova(2017)]{serganova2017representations}
Vera Serganova.
\newblock Representations of lie superalgebras.
\newblock \emph{Perspectives in Lie theory}, pages 125--177, 2017.

\bibitem[Stanley et~al.(2012)]{stanley2012topics}
Richard~P Stanley et~al.
\newblock Topics in algebraic combinatorics.
\newblock \emph{Course notes for Mathematics}, 192:\penalty0 13, 2012.

\bibitem[Vay(2018)]{vay2018hopf}
Cristian Vay.
\newblock On hopf algebras with triangular decomposition.
\newblock \emph{arXiv preprint arXiv:1808.03799}, 2018.

\bibitem[Vay(2023)]{vay2023linkage}
Cristian Vay.
\newblock Linkage principle for small quantum groups.
\newblock \emph{arXiv preprint}, 2023.
\newblock arXiv:2310.00103.

\bibitem[Yamane(2021)]{yamane2021hamilton}
Hiroyuki Yamane.
\newblock Hamilton circuits of cayley graphs of weyl groupoids of generalized quantum groups.
\newblock \emph{arXiv preprint arXiv:2103.16126}, 2021.

\end{thebibliography}

\end{document}